\setlist[enumerate,1]{label=(\roman*)}
\setlist{nolistsep}
\newif\ifIPCOFlag
\newcommand{\IPCO}[2]{\ifIPCOFlag#1\else#2\fi}
\newcommand{\EtAl}{\textit{et al.}}
\newcommand{\MacroColor}[1]{#1}
\newcommand{\MyQED}{\mbox{}\hfill$\square$}
\DeclareMathOperator*{\lexmin}{\MacroColor{lexmin}}
\DeclareMathOperator*{\Argmin}{\MacroColor{argmin}}
\newcommand{\Size}[1]{\langle#1\rangle}
\newcommand{\SizeTwo}[2]{\langle#1,#2\rangle}
\newcommand{\SizeThree}[3]{\langle#1,#2,#3\rangle}
\newcommand{\ZeroVector}{\mathbf{0}}
\newcommand{\OneVector}{\mathbf{1}}
\newcommand{\ComplexityClassStyle}[1]{#1}
\newcommand{\ClassNP}{\MacroColor{\text{\ComplexityClassStyle{NP}}}}
\newcommand{\ClassCoNP}{\MacroColor{\text{\ComplexityClassStyle{coNP}}}}
\newcommand{\ThreeSAT}{\MacroColor{\textsc{3-SAT}}}
\newcommand{\BLPWithSingleUpperLevelVariable}{\MacroColor{$\textsc{BLP}_{\textsc{single}}$}}
\newcommand{\GlobalOptimalProblemName}{\MacroColor{$\textsc{Recog}_{\textsc{single}}^{\textsc{global}}$}}
\newcommand{\ValueOfBLPWithSingleUpperLevelVariable}{\hyperref[ValueOfBLPWithSingleUpperLevelVariableAnchor]{\ValueOfBLPWithSingleUpperLevelVariableName{}}}
\newcommand{\GlobalOptimalProblem}{\hyperref[GlobalOptimalProblemAnchor]{\GlobalOptimalProblemName{}}}
\newcommand{\CompGlobalOptimalProblem}{\hyperref[GlobalOptimalProblemAnchor]{\CompGlobalOptimalProblemName{}}}
\newcommand{\ConstraintSense}{\MacroColor{\ge}}
\newcommand{\StrictConstraintSense}{\MacroColor{>}}
\newcommand{\NVariables}{\MacroColor{n}}
\newcommand{\NConstraints}{\MacroColor{m}}
\newcommand{\IPNVariables}{\MacroColor{r}}
\newcommand{\OptValFunc}{\MacroColor{v}}
\newcommand{\OptSolSet}{\MacroColor{\mathcal{S}}}
\newcommand{\FeasSolSet}{\MacroColor{\mathcal{F}}}
\newcommand{\PartialOptValFunc}{\MacroColor{\psi}}
\newcommand{\SATBooleanFormula}{\MacroColor{F}}
\newcommand{\SATTruthAssignment}{\MacroColor{u}}
\newcommand{\SATReformulationBinaryVariable}{\MacroColor{x}}
\newcommand{\SolutionSizeEstimate}{\MacroColor{f_{\mathrm{sol}}}}
\newcommand{\ReformZ}{\MacroColor{z}}
\newcommand{\ReformFrac}{\MacroColor{w}}
\newcommand{\ReformAugFrac}{\MacroColor{f}}
\newcommand{\ReformPen}{\MacroColor{e}}
\newcommand{\LexUVar}{\MacroColor{\theta}}
\newcommand{\LexLVarS}{\MacroColor{s}}
\newcommand{\LexLVarT}{\MacroColor{t}}
\newcommand{\LexLVarU}{\MacroColor{u}}
\newcommand{\LexLVarFuncU}{\MacroColor{f^u}}
\newcommand{\LexLVarFuncZ}{\MacroColor{f^z}}
\newcommand{\LexLVarFuncInput}{\MacroColor{\theta}}
\newcommand{\BinaryVector}{\MacroColor{\mu}}
\newcommand{\LexWeightValue}{\MacroColor{16}}
\newcommand{\EncodingSize}{size}
\begin{document}

% \title{Complexity of Bilevel Linear Programming\\with a Single Upper-Level Variable}
% \titlerunning{Complexity of Bilevel LP with a Single Upper-Level Variable}
% \author{Anonymous Author(s)}
% \institute{Anonymous Institute(s)}
% \authorrunning{Anonymous}
% \titlerunning{Anonymous Submission}

\title{Complexity of Bilevel Linear Programming\\with a Single Upper-Level Variable}
\titlerunning{Complexity of Bilevel LP with a Single Upper-Level Variable}
% If the paper title is too long for the running head, you can set
% an abbreviated paper title here

\author{Nagisa Sugishita\inst{1} \and
Margarida Carvalho\inst{2}}
%
% \authorrunning{F. Author et al.}
% First names are abbreviated in the running head.
% If there are more than two authors, 'et al.' is used.
%
\institute{Département de sciences de la décision, HEC Montréal
% \email{nagisa.sugishita@hec.ca}
\and
Département d'informatique et de recherche opérationnelle, Université de Montréal}

\maketitle              % typeset the header of the contribution
\begin{abstract}
Bilevel linear programming (LP) is one of the simplest classes of bilevel optimization problems, yet it is known to be \ClassNP{}-hard in general. Specifically, determining whether the optimal objective value of a bilevel LP is at least as good as a given threshold, a standard decision version of the problem, is \ClassNP{}-complete. However, this decision problem becomes tractable when either the number of lower-level variables or the number of lower-level constraints is fixed, which prompts the question: What if restrictions are placed on the upper-level problem? In this paper, we address this gap by showing that the decision version of bilevel LP remains \ClassNP{}-complete even when there is only a single upper-level variable, no upper-level constraints (apart from the constraint enforcing optimality of the lower-level decision) and all variables are bounded between 0 and 1. This result implies that fixing the number of variables or constraints in the upper-level problem alone does not lead to tractability in general. On the positive side, we show that there is a polynomial-time algorithm that finds a local optimal solution of such a rational bilevel LP instance. We also demonstrate that many combinatorial optimization problems, such as the knapsack problem and the traveling salesman problem, can be written as such a bilevel LP instance.
\keywords{Bilevel Programming \and Computational Complexity \and Lexicographic Linear Programming}
\end{abstract}

\section{Introduction}

Bilevel programming is a class of hierarchical optimization problems in which a \emph{leader} makes a decision first, and a \emph{follower} subsequently optimizes their own objective subject to the leader's choice. 
In this paper, we focus on the computational complexity of bilevel linear programming (LP), a setting where both the upper- and lower-level problems are LPs.
Unless stated otherwise, we consider the optimistic variant: when multiple optimal solutions exist for the follower, the one most favorable to the leader is chosen.
Even this simplest form of bilevel programming poses serious challenges in theory and practice.
A standard way to analyze the complexity of optimization problems is to consider their decision versions, specifically, determining whether the optimal objective value is equal to or less than a given threshold in a minimization problem.
Jeroslow~\cite{Jeroslow1985} was the first to show that the decision version of bilevel LP is \ClassNP{}-hard.
An alternative proof was given by Ben-Ayed and Blair~\cite{ben1990computational}.
Later, Hansen \EtAl{}~\cite{hansen1992new} showed that the problem remains \ClassNP{}-hard even when restricted to the min-max case.
More recently, Buchheim~\cite{Buchheim2023} proved its inclusion in \ClassNP{}, thereby establishing its \ClassNP{}-completeness.
Other decision problems have also been studied in the context of bilevel LP: Rodrigues \EtAl{}~\cite{RodriguesEtAl2024} showed that checking whether the problem is unbounded is \ClassNP{}-complete, and Vicente \EtAl{}~\cite{vicente1994descent} showed that recognizing local optimality of a given point is \ClassNP{}-hard.
Recently, Prokopyev and Ralphs~\cite{ProkopyevRalphs2024} studied the complexity of the search of a local optimal solution.

Given these negative complexity results, a natural line of research is to identify conditions under which bilevel LP becomes tractable. 
Such conditions can guide the design of efficient algorithms for practical instances.
For example, in the context of mixed-integer LP, a classical result by Lenstra Jr ~\cite{lenstra1983integer} establishes that problems with a fixed number of integer variables are solvable in polynomial time.
Similar results have been derived for bilevel LP.
Deng~\cite{deng1998complexity} demonstrated that bilevel LP is solvable in polynomial time when the number of lower-level variables is fixed.
Likewise, the arguments in~\cite{BasuEtAl2021} and~\cite{Buchheim2023} imply polynomial-time solvability when the number of lower-level constraints is fixed.
It was also shown that the toll pricing problem can be solved in polynomial time when the number of upper-level variables is fixed, enabling efficient algorithmic strategies~\cite{bui2024asymmetry}.
Related arguments and connections to a pessimistic variant of bilevel LP were very recently discussed in \cite{ketkov2025note}, released after the initial submission of this manuscript.

% A potential complication in bilevel problems arises from the presence of coupling (also known as linking) constraints in the upper-level problem, constraints that involve both upper- and lower-level variables.
% Notably, many of the known hardness results for bilevel LP hold even in the absence of coupling constraints.
% For example, the argument of~\cite{Jeroslow1985} implies that bilevel LP without coupling constraints remains \ClassNP{}-hard.
% Henke \EtAl{}~\cite{henke2025coupling} considered the use of the augmented Lagrangian to handle coupling constraints.
% % However, it is not evident whether this transformation can be carried out efficiently.

In this paper, we examine the decision version of bilevel LP under a particularly restricted setting: where there is only a single upper-level variable.
We show that even under this restricted structure, the decision problem remains \ClassNP{}-complete.
In particular, we prove a stronger result: bilevel LP is \ClassNP{}-complete even when there is only one upper-level variable and no upper-level constraints (other than the constraint enforcing the optimality of the lower-level variable with respect to the lower-level problem), and all variables are constrained to lie in the unit cube (i.e., when the lower-level constraints explicitly include bounds of 0 and 1 on each variable).
Our proof also establishes the \ClassNP{}-hardness of the decision version of the pessimistic formulation of bilevel LP. See the discussion following the proof of Theorem~\ref{theorem:ValueOfBLPWithSingleUpperLevelVariableIsNPHard} (page~\pageref{Rev:Pessimistic}) for details, including the definition of the pessimistic formulation.
We also prove that given a candidate solution, the problem of deciding whether it is optimal is also \ClassCoNP{}-complete.
On the positive side, we show that a locally optimal solution can be computed in polynomial time.
Furthermore, we show that many combinatorial problems, such as the knapsack problem and the traveling salesman problem, can be reformulated as bilevel LPs of this form, while preserving their optimal solutions.

\textbf{Paper Structure.}
This paper is organized as follows.
Section~\ref{sec:MainResult} formally introduces the class of bilevel LP instances, its decision version and its hardness, the main result of this paper.
Section~\ref{sec:AuxiliaryResults} presents auxiliary results that will be used to prove the main result in Section~\ref{sec:ProofOfTheorem1}.
Section~\ref{sec:ImplicationOfTheorem} discusses how combinatorial optimization problems can be transformed to bilevel LPs with a single upper-level variable, using the technique developed in the preceding sections.
Section~\ref{sec:ComplexityOfVerifyingASolution} is concerned with the complexity of searching for a globally and locally optimal solution.

\textbf{Conventions.}
We denote the binary encoding size of a rational number $p$ and a rational vector $v$ by $\Size{p}$ and $\Size{v}$, respectively.
Throughout the paper, we refer to the binary encoding size simply as the size.
For any sequence $a_1, \ldots, a_n$, we write $\Size{a_1, \ldots, a_n}$ to denote the sum $\Size{a_1} + \cdots + \Size{a_n}$.
We follow the encoding conventions described in~\cite{Schrijver1998}.
In particular, the size of a rational number $x = p/q$, where $p$ and $q$ are relatively prime integers, is given by $1 + \lceil \log_2(|p| + 1) \rceil + \lceil \log_2(|q| + 1) \rceil$.
We say that a vector, matrix, or optimization instance is \emph{rational} if and only if all its data are rational numbers.
Given an optimization instance $P$, we use $\OptValFunc(P)$, $\FeasSolSet(P)$, and $\OptSolSet(P)$ to denote the optimal objective value, the set of feasible solutions, and the set of optimal solutions, respectively.
We use $\ZeroVector$ (respectively, $\OneVector$) to denote the all-zeros (respectively, all-ones) vector, with the dimension clear from the context.
% To reduce clutter, we sometimes refer to column vectors inline.
% Given $u \in \mathbb{R}^{d_1}$ and $v \in \mathbb{R}^{d_2}$, we use the notation $(u, v)$ to denote the column vector
% $
% \begin{pmatrix} u \\ v \end{pmatrix} \in \mathbb{R}^{d_1 + d_2}.
% $

\section{Main Result}
\label{sec:MainResult}

In this paper, we study the hardness of rational bilevel LPs with a single upper-level variable.
In fact, we consider a more restrictive class where the upper-level problem has no constraints and all the variables are bounded to be within the unit cube.
Let \BLPWithSingleUpperLevelVariable{} denote the set of such bilevel LP instances.
An instance in \BLPWithSingleUpperLevelVariable{} can be written as:
\begin{align}
\min_{x_1, x_2} \ &
c_{2 1}^{\top} x_1 + c_{2 2} x_2
\tag{$\textsc{P}_{\textsc{single}}$}
\label{eq:NotationBLP}
\\
\text{s.t.} \
&
x_1 \in \Argmin_{x_1'} \{ c_{1 1}^{\top} x_1' : A_{1 1} x_1' + A_{1 2} x_2 \ge b_1,
\ZeroVector \le x_1' \le \OneVector, 0 \le x_2 \le 1
\}, \notag
\end{align}
where $c_{1 1}, c_{2 1} \in \mathbb{Q}^{\NVariables}$, $c_{2 2} \in \mathbb{Q}$, $A_{1 1} \in \mathbb{Q}^{\NConstraints \times \NVariables}$, and $A_{1 2}, b_1 \in \mathbb{Q}^{\NConstraints}$.
A subtle remark is in order. Note that the lower-level constraints impose the bounds on the upper-level variable $x_2$.
If the leader chooses $x_2$ outside these bounds, the lower-level problem becomes infeasible, consequently resulting into a bilevel infeasible solution to \eqref{eq:NotationBLP}.
In other words, although the upper-level problem has no variable bounds on $x_2$, effective bounds are implicitly enforced by the lower-level constraints.

Consider the following decision problem:
\begin{align*}
& 
\parbox{0.75\textwidth}{Given a rational instance \eqref{eq:NotationBLP} and a rational 
number $k$, is the optimal objective value less than or equal to $k$?}
\label{ValueOfBLPWithSingleUpperLevelVariableAnchor}
\tag{\ValueOfBLPWithSingleUpperLevelVariable{}}
\end{align*}
We will prove the following result.

\begin{theorem}
\label{theorem:ValueOfBLPWithSingleUpperLevelVariableIsNPHard}
The decision problem \ValueOfBLPWithSingleUpperLevelVariable{} is \ClassNP{}-complete.
\end{theorem}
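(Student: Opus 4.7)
Membership in \ClassNP{} is immediate: every instance of \BLPWithSingleUpperLevelVariable{} is a special case of a general bilevel LP, so the inclusion follows from Buchheim~\cite{Buchheim2023}.

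For \ClassNP{}-hardness, the plan is to reduce from \ThreeSAT{}. Given a 3-CNF formula \SATBooleanFormula{} on $n$ variables, the goal is to build an instance of \BLPWithSingleUpperLevelVariable{} together with a threshold $k$ so that a satisfying assignment of \SATBooleanFormula{} corresponds to a bilevel solution of value $\le k$ and vice versa. The single upper-level variable $x_{2}\in[0,1]$ is meant to encode a candidate truth assignment \SATTruthAssignment{} via its dyadic expansion $x_{2} = \sum_{i=1}^{n} \SATTruthAssignment_{i}\, 2^{-i}$, so that every binary assignment corresponds to a distinct dyadic rational of polynomial encoding size in $n$.

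The lower-level LP has two jobs: recover the bit vector \SATTruthAssignment{} from $x_{2}$, and certify that \SATTruthAssignment{} satisfies \SATBooleanFormula{}. I would introduce lower-level variables $\SATTruthAssignment_{1},\dots,\SATTruthAssignment_{n}\in[0,1]$, a slack $\LexLVarS\ge 0$, and a violation penalty $\ReformPen\ge 0$, with a linking constraint $\sum_{i}\SATTruthAssignment_{i}\,2^{-i}+\LexLVarS=x_{2}$ and, for each clause $C_{j}$ of \SATBooleanFormula{}, a linear constraint of the form $\ReformPen\ge 1-L_{j}(\SATTruthAssignment)$ where $L_{j}$ is the standard LP-relaxation of clause satisfaction. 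The central device is a \emph{lexicographic} lower-level objective that first minimizes the slack $\LexLVarS$---forcing the leader to pick a dyadic $x_{2}$ and, on the resulting face, singling out the unique binary vertex of the bit-decomposition polytope---and then minimizes the violation \ReformPen{}. To produce an actual instance of \BLPWithSingleUpperLevelVariable{}, whose lower level must be a single-objective LP, I would invoke the lex-to-weighted-sum and penalty reformulations developed in Section~\ref{sec:AuxiliaryResults}, collapsing $(\LexLVarS,\ReformPen)$ into a scalar objective of the form $\LexWeight\cdot\LexLVarS+\ReformPen$ with weights $\LexWeight$ and $\PenaltyThresholdFunction$ and penalty magnitude $\LemmaPenatyReformulationThreshold$ calibrated to preserve the priority order yet to remain of polynomial bit length.

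Correctness then splits into two directions. If \SATBooleanFormula{} is satisfiable by $\SATTruthAssignment^{*}$, the leader sets $x_{2}=\sum_{i}\SATTruthAssignment^{*}_{i}\,2^{-i}$; the lower level recovers $\SATTruthAssignment=\SATTruthAssignment^{*}$ with $\LexLVarS=\ReformPen=0$, so the upper-level value meets the threshold. Conversely, if \SATBooleanFormula{} is unsatisfiable, the claim is that no choice of $x_{2}$ can push $\LexWeight\cdot\LexLVarS+\ReformPen$ below a fixed positive constant. The main obstacle, and where I expect essentially all of the technical effort to concentrate, is exactly this converse direction: because $x_{2}$ is a continuous variable, the leader can pick non-dyadic values arbitrarily close to dyadic ones, making $\LexLVarS$ tiny while simultaneously distorting the recovered bit vector away from binary and potentially satisfying all clause relaxations; ruling out this continuous ``cheating'' is precisely what a careful choice of $\LexWeight$ and $\LemmaPenatyReformulationThreshold$, matched to the combinatorial structure of the lower-level polytope, must accomplish, and this is the role played by the auxiliary lemmas of Section~\ref{sec:AuxiliaryResults}.
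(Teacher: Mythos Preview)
Your reduction has a genuine gap at the bit-extraction step. The claim that lexicographically minimizing the single slack $s$ in the constraint $\sum_{i} u_i\,2^{-i} + s = x_2$ ``singles out the unique binary vertex'' is false: once $s = 0$, the remaining set $\{u \in [0,1]^n : \sum_i u_i\,2^{-i} = x_2\}$ is an $(n-1)$-dimensional face of the cube, and the follower is free to pick any point on it that minimizes the secondary objective $e$. In particular, for $x_2 = \tfrac{1}{2}(1 - 2^{-n})$ the point $u = (\tfrac12, \dots, \tfrac12)$ lies on this face and satisfies every clause-relaxation constraint $L_j(u) \ge 1$ (each 3-literal clause contributes $3/2$), so the follower achieves $e = 0$ regardless of whether $\SATBooleanFormula$ is satisfiable. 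Your construction therefore cannot distinguish satisfiable from unsatisfiable instances, and no choice of the weights $\LexWeight$ or $\LemmaPenatyReformulationThreshold$ repairs this, because the failure is in the lower-level \emph{optimal set}, not in the objective scaling.

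The paper's remedy is structurally different: it replaces your single linking constraint by a \emph{cascade} of $n+1$ digit-extraction gadgets, each with its own lexicographic objective (so the lower-level lex LP has $n+2$ objectives, not two). For each position $i$ one introduces $s_i, t_i$ with $s_i \ge \tfrac32 u_i - \tfrac12$, $t_i \ge \tfrac32 u_i - 1$, sets $z_i = 2(s_i - t_i)$, and minimizes $s_i + t_i$; this forces $z_i$ to be a specific piecewise-linear function of $u_i$ (equal to $0$ on $[0,1/3]$ and $1$ on $[2/3,1]$), after which $u_{i-1} = 3u_i - 2z_i$ shifts to the next ternary digit. The upshot (Lemma~\ref{lemma:LexLowerLevelSolution}) is that every $z_i$ is a \emph{unique} function of the leader's scalar $\theta$, eliminating the follower's freedom entirely. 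Collapsing this $(n+2)$-objective lex LP into a single weighted objective of polynomial encoding size (Lemma~\ref{lemma:LexToWeighted}) is itself a nontrivial, instance-specific argument---the standard Sherali-type weights do not apply when the number of objectives grows with $n$---and is not among the tools in Section~\ref{sec:AuxiliaryResults}, which supply only the solution-size bound and the penalty reformulation used at the final step.
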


In Section~\ref{sec:ProofOfTheorem1}, we prove Theorem~\ref{theorem:ValueOfBLPWithSingleUpperLevelVariableIsNPHard}.
To this end, we provide a reduction from \ThreeSAT{} to \ValueOfBLPWithSingleUpperLevelVariable{}.
That is, given a Boolean formula $\SATBooleanFormula$ in 3-conjunctive normal form (3CNF), we construct a \BLPWithSingleUpperLevelVariable{} instance such that the optimal objective value is $-1$ if $\SATBooleanFormula$ is satisfiable and $0$ otherwise.

To simplify the exposition, we divide the reduction into several steps.
First, we construct a bilevel LP instance with multiple upper-level variables and upper-level constraints (instance~\ref{eq:SATBLPFirst}).
We then reduce the problem to a bilevel LP with a single upper-level variable (instance~\ref{eq:SATBilevelWeightedProblem}).
Lastly, we eliminate the upper-level constraints via a penalty method (instance~\ref{eq:SATBLPLast}).
The final instance is a bilevel LP with a single upper-level variable and no upper-level constraints.
We will show that the optimal objective value is preserved throughout these transformations.

To develop our argument, we need to analyse general bilevel LPs, i.e., instances with multiple variables and constraints in the upper-level problem.
Thus, in Section~\ref{sec:AuxiliaryResults}, we provide some auxiliary results to study general bilevel LPs.

\section{Auxiliary Results}
\label{sec:AuxiliaryResults}

This section presents auxiliary results that will be used in Section~\ref{sec:ProofOfTheorem1} to prove Theorem~\ref{theorem:ValueOfBLPWithSingleUpperLevelVariableIsNPHard}.
Section~\ref{subsec:EstimateOfOptimalSolutionSize} provides an estimate on the size of an optimal solution to a bilevel LP.
To the best of our knowledge, such bounds have not been explicitly discussed in the literature.
Section~\ref{subsec:ReformulationOfCouplingConstraints} presents a method for reformulating upper-level constraints as lower-level constraints.

\subsection{Estimate of Optimal Solution Size}
\label{subsec:EstimateOfOptimalSolutionSize}

We begin by presenting a bound on the size of an optimal solution to an LP, which will be used to estimate the size of solutions for bilevel LPs.
\begin{lemma}
\label{lemma:estimate-of-primal-linear-inequality}
Let $A, B \in \mathbb{Q}^{\NConstraints \times \NVariables}$, $a, b \in \mathbb{Q}^{\NConstraints}$, and $c \in \mathbb{Q}^{\NVariables}$.  
Let $\sigma$ denote the maximum \EncodingSize{} of the entries in $A$, $B$, $a$, and $b$.  
Consider the LP
\[
\min_{x} \left\{ c^{\top} x : A x \ConstraintSense a, B x \StrictConstraintSense b \right\}.
\]
If an optimal solution exists, then there exists a rational optimal solution $\bar{x}$ of size bounded by a polynomial in $\NVariables$ and $\sigma$.  
That is, there exists a polynomial $\SolutionSizeEstimate$ such that $\Size{\bar{x}} \le \SolutionSizeEstimate(\NVariables, \sigma)$.
\end{lemma}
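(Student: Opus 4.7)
My plan is to construct a small-size strict-LP optimum as a carefully chosen Minkowski--Weyl combination of vertices and extreme rays of the weak relaxation, and to derive the size estimate from the classical Cramer's-rule bound for rational vertices of polyhedra (which, crucially, does not depend on the objective $c$).

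I would first show that the strict LP and its weak relaxation $\min\{c^{\top} x : A x \ge a,\ B x \ge b\}$ share the same optimum whenever the former is attained. Let $\bar{v}$ denote the strict optimum, attained at some $x^*$, and let $v'$ denote the weak optimum. Clearly $v' \le \bar{v}$. Conversely, if $v' < \bar{v}$ and $y$ is weakly optimal, then $z_t := (1-t)\,y + t\,x^*$ is strict-feasible for every $t \in (0,1]$ (its $B$-slack equals $t(Bx^* - b) > 0$) with objective $(1-t) v' + t \bar{v} < \bar{v}$, contradicting the strict optimality of $x^*$; hence $v' = \bar{v}$.

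Next I would focus on the optimal face $F := \{x : A x \ge a,\ B x \ge b,\ c^{\top} x = v'\}$ of the weak LP. Since $x^* \in F$ satisfies $B x^* > b$, no row of $B$ is active on the whole of $F$. Vertices and extreme rays of $F$ are also vertices and extreme rays of $\{A x \ge a,\ B x \ge b\}$ (as $F$ is a face of this polyhedron), so Cramer's rule applied to the at most $n$ active inequalities---whose entries have size at most $\sigma$---shows each such vertex and ray has encoding size polynomial in $n$ and $\sigma$, with no dependence on $c$ or on the number of rows. By Minkowski--Weyl together with Carath\'eodory, $x^*$ admits a decomposition $x^* = \sum_{j=1}^{J} \lambda_j v_j + \sum_{k=1}^{K} \mu_k r_k$ with $J + K \le n+1$, $\lambda_j > 0$, $\sum_j \lambda_j = 1$, $\mu_k > 0$, each $v_j$ a vertex and each $r_k$ an extreme ray of $F$.

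I would then define $\bar{x} := \tfrac{1}{J} \sum_{j=1}^{J} v_j + \sum_{k=1}^{K} r_k$. Feasibility $A \bar{x} \ge a$ and $c^{\top} \bar{x} = v'$ follow at once from $v_j, r_k \in F$. To verify $B \bar{x} > b$, fix a row $i$: the inequality $B_i x^* > b_i$ expands as $\sum_j \lambda_j B_i v_j + \sum_k \mu_k B_i r_k > b_i$, while $B_i v_j \ge b_i$ and $B_i r_k \ge 0$, so at least one summand is strict; the same summand makes $B_i \bar{x} > b_i$ because all coefficients in the definition of $\bar{x}$ are positive. Thus $\bar{x}$ is strict-feasible and, as $c^{\top} \bar{x} = v' = \bar{v}$, strict-optimal. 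As $\bar{x}$ is an explicit rational combination of $O(n)$ vectors each of size polynomial in $n$ and $\sigma$, its own size is polynomial in $n$ and $\sigma$, furnishing the polynomial $\SolutionSizeEstimate$.

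The step I expect to be the main obstacle is the previous paragraph: exhibiting a \emph{single} small-size point that is strict in every $B$-coordinate simultaneously. The idea is to work from the abstract strict-feasible optimum $x^*$ guaranteed by hypothesis, extract via Carath\'eodory a decomposition into at most $n+1$ small-size extreme elements of $F$, and then replace the (possibly tiny or data-dependent) Carath\'eodory weights with fixed positive constants so that the slack present at $x^*$ transfers to $\bar{x}$ coordinate-wise. Routing the argument through the \emph{existence} of such a decomposition---rather than solving an auxiliary LP whose data would involve $v'$ or $c$---is what keeps the final size estimate free of any dependence on $c$ or on the number of rows $m$.
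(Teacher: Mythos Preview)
The paper does not actually prove this lemma; it simply refers the reader to Schrijver's textbook. Your argument therefore supplies what the paper omits, and the route you take---showing the strict and weak LPs share the same optimal value, observing that the optimal face $F$ is a face of the weak polyhedron $P=\{x:Ax\ge a,\;Bx\ge b\}$ (so its vertices and extreme rays are vertices and extreme rays of $P$, determined only by $(A,B,a,b)$ and hence of size independent of $c$), and then replacing the Carath\'eodory coefficients by fixed positive weights to recover strict feasibility row by row---is correct and self-contained. The uniform-weight convex combination is a clean resolution of what you rightly flag as the main obstacle, and it is precisely what keeps the bound free of $c$ and of the number of rows.

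One minor gap worth noting: your Minkowski--Weyl/Carath\'eodory step implicitly assumes $P$ is pointed. If the lineality space $L=\ker\!\left(\begin{smallmatrix}A\\B\end{smallmatrix}\right)$ is nontrivial, $F$ has no vertices and the stated decomposition into vertices and extreme rays fails. The fix is routine: when the strict LP attains its optimum one necessarily has $c\perp L$, and since $Ad=Bd=0$ for every $d\in L$, the orthogonal projection of $x^*$ onto $L^\perp$ is again a strict optimum; you can then run your argument inside the pointed polyhedron $P\cap L^\perp$, whose extra defining equations (a rational basis of $L$) still have entries of size polynomial in $n$ and $\sigma$. In the paper's applications all variables lie in $[0,1]$, so this degenerate case never actually arises there.
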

Note that the optimization instance in Lemma~\ref{lemma:estimate-of-primal-linear-inequality} includes strict inequalities.
Nonetheless, we refer to such instances as LPs throughout this paper.
Strict inequalities are used later in the proof of Lemma~\ref{lemma:PenaltyReformulation}.
\phantomsection\label{Rev:MorePreciseReferenceToSchrijver}
For the proof, see Corollary~10.2 in~\cite{Schrijver1998} (the corollary only discusses weak inequalities, but the extension to strict inequalities is straightforward).
We are now ready to state an analogous result for bilevel LPs.
\begin{lemma}
\label{lemma:SolutionSizeEstimate}
Let
$A_{1 1} \in \mathbb{Q}^{\NConstraints_1 \times \NVariables_1}, A_{1 2} \in \mathbb{Q}^{\NConstraints_1 \times \NVariables_2}, A_{2 1} \in \mathbb{Q}^{\NConstraints_2 \times \NVariables_1}, A_{2 2} \in \mathbb{Q}^{\NConstraints_2 \times \NVariables_2}$, $b_1 \in \mathbb{Q}^{\NConstraints_1}, b_2 \in \mathbb{Q}^{\NConstraints_2}$, $c_{1 1}, c_{2 1} \in \mathbb{Q}^{\NVariables_1}$ and $c_{2 2} \in \mathbb{Q}^{\NVariables_2}$.
Let $\NVariables = \NVariables_1 + \NVariables_2$ and $\sigma$ denote the maximum \EncodingSize{} of the entries in $A_{1 1}, A_{1 2}, A_{2 1}, A_{2 2}$, $b_1$, and $b_2$.
Consider the bilevel LP
\begin{align}
\min_{x_1, x_2} \ &
c_{2 1}^{\top} x_1 + c_{2 2}^{\top} x_2
\label{eq:SolutionSizeEstimateBLP}
\\
\text{s.t.} \ &
A_{2 1} x_1 + A_{2 2} x_2 \ge b_2, \notag \\
&
x_1 \in \Argmin_{x_1'} \{ c_{1 1}^{\top} x_1' : A_{1 1} x_1' + A_{1 2} x_2 \ge b_1 \}. \notag
\end{align}
If an optimal solution exists, then there exists a rational optimal solution $(\bar{x}_1, \bar{x}_2)$ such that $\SizeTwo{\bar{x}_1}{\bar{x}_2} \le \SolutionSizeEstimate(\NVariables, \sigma)$, \phantomsection\label{Rev:DefOfFSol} where $\SolutionSizeEstimate$ is the polynomial defined in Lemma~\ref{lemma:estimate-of-primal-linear-inequality}.
\end{lemma}
The proof is given in \IPCO{the full version of the paper~\cite{sugishita2025complexitybilevellinearprogramming}}{Appendix~\ref{sec:proof_of_lemma_solution_size_estimate}}.
\phantomsection\label{Rev:RelationToBuchheim}%
We note that this is closely related to the result of~\cite{Buchheim2023}, where it is shown that the decision version of bilevel LP belongs to \ClassNP{}.
Their argument can be used to establish that, whenever the optimal objective value is below a given threshold, there exists a rational feasible solution of polynomial encoding length with objective value below the threshold. 
In contrast, Lemma~\ref{lemma:SolutionSizeEstimate} guarantees the existence of a rational optimal solution of polynomial encoding length, provided that an optimal solution exists.
% TODO Recall fsol is polynomial in Lemma 1.

\subsection{Reformulation of Upper-Level Constraints}
\label{subsec:ReformulationOfCouplingConstraints}

\phantomsection\label{Rev:ComplicationByCoupling}
If the upper-level problem has a constraint that does not involve any lower-level variables, it can be ``moved'' to the lower-level problem without changing the feasible solution set. 
However, if an upper-level constraint involves lower-level variables, special care must be taken. 
In this section, we present our approach for removing such constraints by introducing a penalty term in the objective.
Let
\begin{align}
\min_{x_1, x_2} \ &
c_{2 1}^{\top} x_1 + c_{2 2}^{\top} x_2 
\tag{$Q_{\mathrm{blp}}$}
\label{eq:PenaltyBefore}
\\
\text{s.t.} \ &
A_{2 1} x_1 + A_{2 2} x_2 \ge b_2, \notag \\
&
\bar{A}_{2 1} x_1 + \bar{A}_{2 2} x_2 \ge \bar{b}_2, \notag \\
&
x_1 \in \Argmin_{x_1'} \{ c_{1 1}^{\top} x_1' : A_{1 1} x_1' + A_{1 2} x_2 \ge b_1 \}, \notag 
\end{align}
where $A_{1 1} \in \mathbb{Q}^{\NConstraints_1 \times \NVariables_1}, A_{1 2} \in \mathbb{Q}^{\NConstraints_1 \times \NVariables_2}, A_{2 1} \in \mathbb{Q}^{\NConstraints_2 \times \NVariables_1}, A_{2 2} \in \mathbb{Q}^{\NConstraints_2 \times \NVariables_2}, \bar{A}_{2 1} \in \mathbb{Q}^{\NConstraints_2' \times \NVariables_1}, \bar{A}_{2 2} \in \mathbb{Q}^{\NConstraints_2' \times \NVariables_2}$, $b_1 \in \mathbb{Q}^{\NConstraints_1}, b_2 \in \mathbb{Q}^{\NConstraints_2}, \bar{b}_2 \in \mathbb{Q}^{\NConstraints_2'}$, $c_{1 1}, c_{2 1} \in \mathbb{Q}^{\NVariables_1}$ and $c_{2 2} \in \mathbb{Q}^{\NVariables_2}$.
We are interested in ``replacing'' constraints $\bar{A}_{2 1} x_1 + \bar{A}_{2 2} x_2 \ge \bar{b}_2$ in \eqref{eq:PenaltyBefore} with a penalty term in the objective with a help of a single variable $\ReformPen \in \mathbb{R}$ and a real number $M$:
\begin{align}
\min_{x_1, x_2, \ReformPen} \ &
c_{2 1}^{\top} x_1 + c_{2 2}^{\top} x_2 + M \ReformPen
\tag{$Q_{\mathrm{blp}}'(M)$}
\label{eq:PenaltyAfter}
\\
\text{s.t.} \ &
A_{2 1} x_1 + A_{2 2} x_2 \ge b_2, \notag \\
&
(x_1, \ReformPen) \in \Argmin_{x_1', \ReformPen'} 
\left\{ 
\begin{array}{rl}
&
A_{1 1} x_1' + A_{1 2} x_2 \ge b_1, \\
c_{1 1}^{\top} x_1' : 
&
\bar{A}_{2 1} x_1' + \bar{A}_{2 2} x_2 + \ReformPen' \OneVector \ge \bar{b}_2, \\
&
\ReformPen' \in I_{\ReformPen}
\end{array}
\right\}, \notag 
\end{align}
where $I_{\ReformPen}$ is $[0, 1]$ or $[0, \infty)$.
The following result holds for both choices of $I_{\ReformPen}$.
The former is more convenient for our purposes, as we would like all variables to be constrained within $[0, 1]$, whereas the latter is more natural in applications.
\begin{lemma}
\label{lemma:PenaltyReformulation}
Let $I_{\ReformPen}$ is $[0, 1]$ or $[0, \infty)$, and suppose \eqref{eq:PenaltyBefore} and \eqref{eq:PenaltyAfter} have an optimal solution for any $M > 0$.
Let $\NVariables = \NVariables_1 + \NVariables_2$, $\sigma$ denote the maximum \EncodingSize{} of entries in $A_{1 1}$, $A_{1 2}$, $A_{2 1}$, $A_{2 2}$, $\bar{A}_{2 1}$, $\bar{A}_{2 2}$, $b_1$, $b_2$, $\bar{b}_2$, and $c_{\infty} = \max\{ \| c_{2 1} \|_{\infty}, \| c_{2 2} \|_{\infty}, 1 \}$.
For any $M \ge 3 \NVariables c_{\infty} 4^{\SolutionSizeEstimate(\NVariables + 1, \sigma)}$, the following relations hold:
\begin{enumerate}
\item $\OptValFunc\eqref{eq:PenaltyBefore} = \OptValFunc\eqref{eq:PenaltyAfter}$;
\item $\OptSolSet\eqref{eq:PenaltyAfter} = \{(x_1, x_2, 0) : (x_1, x_2) \in \OptSolSet\eqref{eq:PenaltyBefore} \}$.
\end{enumerate}
\end{lemma}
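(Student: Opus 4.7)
\begin{ProofSketch}
The plan is to combine a simple ``lift'' $(\hat{x}_1, \hat{x}_2) \mapsto (\hat{x}_1, \hat{x}_2, 0)$ from $\OptSolSet\eqref{eq:PenaltyBefore}$ into $\FeasSolSet\eqref{eq:PenaltyAfter}$ with a penalty argument using Lemma~\ref{lemma:SolutionSizeEstimate}, which rules out rational feasible points of \eqref{eq:PenaltyAfter} with positive $\ReformPen$-coordinate. First I verify the lift: for $(\hat{x}_1, \hat{x}_2) \in \OptSolSet\eqref{eq:PenaltyBefore}$, the triple $(\hat{x}_1, \hat{x}_2, 0)$ satisfies the remaining upper-level constraint $A_{2 1} \hat{x}_1 + A_{2 2} \hat{x}_2 \ge b_2$, and $(\hat{x}_1, 0)$ belongs to the new lower-level $\Argmin$ at $\hat{x}_2$: the new lower-level feasible set is a subset of the original one in the $x_1'$ coordinates, so its optimal value cannot decrease, yet $\hat{x}_1$ attains the original optimum and is feasible in the new lower level with $\ReformPen' = 0$ because $\bar{A}_{2 1} \hat{x}_1 + \bar{A}_{2 2} \hat{x}_2 \ge \bar{b}_2$. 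This gives $\OptValFunc\eqref{eq:PenaltyAfter} \le \OptValFunc\eqref{eq:PenaltyBefore}$.

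Second, I apply Lemma~\ref{lemma:SolutionSizeEstimate} to \eqref{eq:PenaltyAfter} — which has $\NVariables + 1$ variables and constraint data whose maximum entry size is still bounded by $\sigma$ (the added entries are $\pm 1$ and possibly the constant $\bar{\ReformPen} = 1$) — to obtain a rational optimum $(\tilde{x}_1, \tilde{x}_2, \tilde{e})$ of size at most $\SolutionSizeEstimate(\NVariables + 1, \sigma)$. If $\tilde{e} > 0$, the encoding-size bound forces $\tilde{e} \ge 2^{-\SolutionSizeEstimate(\NVariables + 1, \sigma)}$, so the penalty term satisfies $M \tilde{e} \ge 3 \NVariables c_{\infty} \cdot 2^{\SolutionSizeEstimate(\NVariables + 1, \sigma)}$, while $|c_{2 1}^{\top} \tilde{x}_1 + c_{2 2}^{\top} \tilde{x}_2| \le \NVariables c_{\infty} \cdot 2^{\SolutionSizeEstimate(\NVariables + 1, \sigma)}$, giving a total objective of at least $2 \NVariables c_{\infty} \cdot 2^{\SolutionSizeEstimate(\NVariables + 1, \sigma)}$. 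On the other hand, Lemma~\ref{lemma:SolutionSizeEstimate} applied to \eqref{eq:PenaltyBefore} yields $\OptValFunc\eqref{eq:PenaltyBefore} \le \NVariables c_{\infty} \cdot 2^{\SolutionSizeEstimate(\NVariables, \sigma)}$, and combined with the lift bound from the first step this contradicts the estimate above. So $\tilde{e} = 0$, which means $\bar{A}_{2 1} \tilde{x}_1 + \bar{A}_{2 2} \tilde{x}_2 \ge \bar{b}_2$; to conclude $(\tilde{x}_1, \tilde{x}_2) \in \FeasSolSet\eqref{eq:PenaltyBefore}$ I still need $\tilde{x}_1$ to lie in the \emph{original} lower-level $\Argmin$ at $\tilde{x}_2$ — automatic when $\bar{\ReformPen} = \infty$, and otherwise delivered by a short convex-combination argument: any hypothetical $x_1^{**}$ in the original $\Argmin$ with $c_{1 1}^{\top} x_1^{**} < c_{1 1}^{\top} \tilde{x}_1$ must have violation strictly greater than $1$, but then the segment point $(1 - \lambda) \tilde{x}_1 + \lambda x_1^{**}$ for small $\lambda > 0$ lies in the new lower-level feasible set with strictly smaller $c_{1 1}^{\top}$ value, contradicting the new-$\Argmin$ status of $\tilde{x}_1$. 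Combined with the lift bound this establishes the value equality.

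For the optimal-set equality, the lift already yields one inclusion. For the reverse I would suppose towards contradiction that $(\bar{x}_1, \bar{x}_2, \bar{e}) \in \OptSolSet\eqref{eq:PenaltyAfter}$ has $\bar{e} > 0$; then some row $i^*$ satisfies $(\bar{A}_{2 1} \bar{x}_1 + \bar{A}_{2 2} \bar{x}_2)_{i^*} < \bar{b}_{2, i^*}$, and the bilevel LP obtained from \eqref{eq:PenaltyAfter} by appending the strict upper-level inequality $(\bar{A}_{2 1} x_1 + \bar{A}_{2 2} x_2)_{i^*} < \bar{b}_{2, i^*}$ attains the value $\OptValFunc\eqref{eq:PenaltyAfter}$ at $(\bar{x}_1, \bar{x}_2, \bar{e})$. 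Applying the strict-inequality version of Lemma~\ref{lemma:SolutionSizeEstimate} to this restricted instance produces a rational optimum whose $\ReformPen$-coordinate is necessarily positive and therefore at least $2^{-\SolutionSizeEstimate(\NVariables + 1, \sigma)}$, and the penalty computation of the second step then drives its objective strictly above $\OptValFunc\eqref{eq:PenaltyBefore}$, a contradiction.

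The main obstacle is the strict-inequality extension of Lemma~\ref{lemma:SolutionSizeEstimate} invoked in the last step. I expect this to follow from an essentially routine adaptation of the proof of Lemma~\ref{lemma:SolutionSizeEstimate}, since Lemma~\ref{lemma:estimate-of-primal-linear-inequality} already permits strict inequalities at the single-level stage, but verifying that the resulting size bound still plugs into the stated threshold $M \ge 3 \NVariables c_{\infty} \cdot 4^{\SolutionSizeEstimate(\NVariables + 1, \sigma)}$ — and in particular that the dependence on $\sigma$ picks up only the rows of $\bar{A}_{2 1}, \bar{A}_{2 2}, \bar{b}_2$ already counted — is the most delicate bookkeeping in the argument.
\end{ProofSketch}
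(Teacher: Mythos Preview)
Your proposal is correct and essentially the same as the paper's argument. The paper dissolves the obstacle you flag by working inside the polyhedral decomposition from the proof of Lemma~\ref{lemma:SolutionSizeEstimate}: it picks the polyhedron $U'_{\bar\omega}$ containing the hypothetical optimum with $\bar e > 0$ and applies Lemma~\ref{lemma:estimate-of-primal-linear-inequality} directly to the single-level LP over $U'_{\bar\omega} \cap \{e > 0\}$, so the only strict constraint is $e > 0$ (no violated-row detour needed) and the size bound $f_{\mathrm{sol}}(n+1,\sigma)$ carries over without the bookkeeping you worry about.
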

\phantomsection\label{Rev:AugmentedLagrangian}
A related penalty-based reformulation appears in~\cite{henke2025coupling}, although it is unclear whether their transformation can be performed efficiently.
Recent works~\cite{henke2025couplingpessimistic,lefebvre2024exact} investigate this question.
In \IPCO{the full version of the paper~\cite{sugishita2025complexitybilevellinearprogramming}}{Appendix~\ref{sec:ProofOfLemmaPenaltyReformulation}}, we provide an alternative and simpler proof of Lemma~\ref{lemma:PenaltyReformulation}.

\section{Proof of Theorem~\ref{theorem:ValueOfBLPWithSingleUpperLevelVariableIsNPHard}}
\label{sec:ProofOfTheorem1}

We begin by constructing a bilevel LP instance with multiple upper-level variables.
Let \( F \) be a Boolean formula in 3CNF with \( n \) variables \( \SATTruthAssignment{}_1, \ldots, \SATTruthAssignment{}_n \) and \( p \) clauses.
Following \cite{marcotte2005bilevel,pardalos1988checking}, \( F \) can be represented by a system of linear inequalities over binary variables:
$
A_{\SATBooleanFormula} \SATReformulationBinaryVariable{} \geq \OneVector - a_{\SATBooleanFormula}, \ \SATReformulationBinaryVariable{} \in \{0,1\}^n,
$
where \( A_{\SATBooleanFormula} \in \mathbb{Z}^{p \times n} \), \( a_{\SATBooleanFormula} \in \mathbb{Z}^p \).
A satisfying truth assignment corresponds exactly to a feasible solution, where $u_i = \text{true}$ corresponds to $x_i = 1$.
The inequalities are constructed clause by clause. For a clause $(l_1 \vee l_2 \vee l_3)$, we add the inequality 
\[
\sum_{i: u_i \in \{l_1, l_2, l_3\}} x_i
+
\sum_{i: \neg u_i \in \{l_1, l_2, l_3\}} (1 - x_i)
\ge 1.
\]
For example,
$
F = (\SATTruthAssignment{}_1 \vee \neg \SATTruthAssignment{}_2) \wedge (\neg \SATTruthAssignment{}_1 \vee \neg \SATTruthAssignment{}_2 \vee \SATTruthAssignment{}_3)
$
corresponds to
\[
\SATReformulationBinaryVariable{}_1 + (1 - \SATReformulationBinaryVariable{}_2) \geq 1, \quad (1 - \SATReformulationBinaryVariable{}_1) + (1 - \SATReformulationBinaryVariable{}_2) + \SATReformulationBinaryVariable{}_3 \geq 1, \quad \SATReformulationBinaryVariable{} \in \{0,1\}^3.
\]
% First, $A_{\SATBooleanFormula}$ and $a_{\SATBooleanFormula}$ are set to $0$.
% Then, we scan the clauses one by one.
% A positive literal \( \SATTruthAssignment{}_i \) in the \( j \)-th clause increments the \((j,i)\) element of \( A_{\SATBooleanFormula} \) by 1, while a negated literal \(\neg \SATTruthAssignment{}_i\) decrements the \((j,i)\) element of \( A_{\SATBooleanFormula} \) and increments the $j$-th element of \( a_{\SATBooleanFormula} \).
% For example,
% $
% F = (\SATTruthAssignment{}_1 \vee \neg \SATTruthAssignment{}_2) \wedge (\neg \SATTruthAssignment{}_1 \vee \neg \SATTruthAssignment{}_2 \vee \SATTruthAssignment{}_3)
% $
% corresponds to
% \[
% A_{\SATBooleanFormula} = \begin{pmatrix} 1 & -1 & 0 \\ -1 & -1 & 1 \end{pmatrix}, \quad
% a_{\SATBooleanFormula} = \begin{pmatrix} 1 \\ 2 \end{pmatrix},
% \]
% yielding inequalities
% \[
% \SATReformulationBinaryVariable{}_1 + (1 - \SATReformulationBinaryVariable{}_2) \geq 1, \quad (1 - \SATReformulationBinaryVariable{}_1) + (1 - \SATReformulationBinaryVariable{}_2) + \SATReformulationBinaryVariable{}_3 \geq 1, \quad \SATReformulationBinaryVariable{} \in \{0,1\}^3.
% \]
In the following, we say that a binary vector satisfies \( \SATBooleanFormula \) if and only if the corresponding truth assignment satisfies \( \SATBooleanFormula \).

Now, consider the following bilevel LP instance:
\begin{alignat}{2}
\min_{x, y, \ReformFrac} \ & 2 \sum_{i = 1}^{n} \left(y - 2 \ReformFrac_i \right) - \frac{2}{n} \sum_{i = 1}^n \ReformFrac_i
\tag{P${}_{\text{3SAT}}^{\text{}}(\SATBooleanFormula)$}
\label{eq:SATBLPFirst}
\\
\text{s.t.} \ &
A_{\SATBooleanFormula} x \ge \left(\frac{3}{2} - \frac{1}{2} y\right) \OneVector - a_{\SATBooleanFormula}, \label{eq:SATBLPFirstConstraintOne} \\
&
\frac{1}{2} - \frac{1}{2} y \le x_{i} \le \frac{1}{2} + \frac{1}{2} y, 
&&
\forall i = 1, \ldots, \NVariables,
\label{eq:SATBLPFirstConstraintTwo} \\
& 0 \le x_i \le 1, \ 0 \le y \le 1, 
&&
\forall i = 1, \ldots, \NVariables, 
\hspace{13em}
\notag \\
&
\mathmakebox[0cm][l]{
\ReformFrac \in \Argmin_{\ReformFrac'} 
\left\{ 
\sum_{i = 1}^n \ReformFrac_i'
:
-\ReformFrac_i' \le x_i - 1/2 \le \ReformFrac_i', 0 \le \ReformFrac_i' \le 1, \ i = 1, \ldots, n
\right\}. 
}
\notag 
\end{alignat}
\phantomsection\label{Rev:MeaningOfLowerLevel} The lower-level problem measures the closeness of the upper-level decision variable $x$ to the nearest integers by setting $\ReformFrac_i = |x_i - 1/2|$, which is maximized when $x_i \in \{0, 1\}$.
This instance is obtained from the one studied by Vicente \EtAl{}~\cite{vicente1994descent} (see also \cite{RodriguesEtAl2024}).
Instance \eqref{eq:SATBLPFirst} differs from their instance in that we removed some redundant variables from the lower-level problem for simplification and added variable bounds $0 \le x_i \le 1$, $0 \le y \le 1$ and $0 \le w_i \le 1$ for each $i = 1, \ldots, \NVariables$.
Without the variable bounds, the instance is unbounded if and only if $\SATBooleanFormula$ is satisfiable.
In our case (i.e., with the variable bounds), the optimal objective value of \eqref{eq:SATBLPFirst} is $-1$ if $\SATBooleanFormula$ is satisfiable and 0 otherwise, as stated below.
\begin{lemma}
\label{lemma:SATBLPFirst}
\mbox{}
\begin{enumerate}
\item
Suppose $\SATBooleanFormula$ is unsatisfiable.
Then, $x_i = 1/2$ for $i = 1, \ldots, \NVariables$, $y = 0$, and $w = \ZeroVector$ is optimal for \eqref{eq:SATBLPFirst} and $\OptValFunc\eqref{eq:SATBLPFirst} = 0$.
\item
Suppose $\SATBooleanFormula$ is satisfiable and let $\BinaryVector$ be a binary vector satisfying $\SATBooleanFormula$.
Then, $x = \BinaryVector$, $y = 1$, and $w = \ZeroVector$  is optimal for \eqref{eq:SATBLPFirst} and $\OptValFunc\eqref{eq:SATBLPFirst} = -1$.
\end{enumerate}
\end{lemma}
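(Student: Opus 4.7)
The plan is to reduce the bilevel structure to a single-level analysis by solving the lower-level problem in closed form, and then to exploit a rounding argument on the \ThreeSAT{} side.

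First I would note that the lower-level problem is separable across $i$, with optimum $\ReformFrac_i = |x_i - 1/2|$: the inequalities $-\ReformFrac_i' \le x_i - 1/2 \le \ReformFrac_i'$ force $\ReformFrac_i' \ge |x_i - 1/2|$, and the bound $\ReformFrac_i' \le 1$ is inactive since $x_i \in [0, 1]$ gives $|x_i - 1/2| \le 1/2$. Substituting, the upper-level objective reduces to
\[
\Phi(x, y) \coloneqq 2 \NVariables y - \bigl(4 + 2/\NVariables\bigr) \sum_{i=1}^{\NVariables} |x_i - 1/2|.
\]
Setting $z_i = x_i - 1/2$, \eqref{eq:SATBLPFirstConstraintTwo} reads $|z_i| \le y/2$ (which automatically implies $0 \le x_i \le 1$), and \eqref{eq:SATBLPFirstConstraintOne} becomes $A_{\SATBooleanFormula} z \ge -(y/2)\OneVector$ after using $(a_{\SATBooleanFormula})_j = \ell_j$ together with the row-sum identity $\sum_i (A_{\SATBooleanFormula})_{ji} = k_j - \ell_j$, where $k_j$ and $\ell_j$ denote the numbers of positive and negative literals in clause $j$ (so $k_j + \ell_j = 3$).

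For case (ii), the assignment $x = \BinaryVector$, $y = 1$, $\ReformFrac_i = 1/2$ is feasible (since $\BinaryVector$ satisfies $\SATBooleanFormula$) and yields $\Phi(\BinaryVector, 1) = 2\NVariables - (4 + 2/\NVariables)(\NVariables/2) = -1$; global optimality follows from the trivial bound $\sum_i |x_i - 1/2| \le \NVariables y / 2$, which gives $\Phi \ge -y \ge -1$. For case (i), at $y = 0$ the box constraints force $x_i = 1/2$ and thus $\Phi = 0$; for $y > 0$, rescale $\tilde z_i \coloneqq 2 z_i / y \in [-1, 1]$, so $A_{\SATBooleanFormula} \tilde z \ge -\OneVector$ and $\Phi = y\bigl(2\NVariables - (2 + 1/\NVariables) S\bigr)$ with $S \coloneqq \sum_i |\tilde z_i|$. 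It now suffices to establish the following rounding claim: if $\tilde z \in [-1, 1]^{\NVariables}$ satisfies $A_{\SATBooleanFormula} \tilde z \ge -\OneVector$ and $T \coloneqq \NVariables - S < 2$, then $\SATBooleanFormula$ is satisfiable. Granting this, unsatisfiability forces $S \le \NVariables - 2$, whence $\Phi \ge y(3 + 2/\NVariables) > 0$ for $y > 0$, so the infimum is attained at $y = 0$ with value $0$.

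To prove the rounding claim, let $\nu_i \coloneqq (1 + \tilde z_i)/2 \in [0, 1]$ and threshold $\hat\nu_i = 1$ if $\nu_i \ge 1/2$ and $\hat\nu_i = 0$ otherwise, so that $|\hat\nu_i - \nu_i| = (1 - |\tilde z_i|)/2$. Since the entries of $A_{\SATBooleanFormula}$ lie in $\{-1, 0, 1\}$,
\[
\bigl|(A_{\SATBooleanFormula} \hat\nu)_j - (A_{\SATBooleanFormula} \nu)_j\bigr| \le \sum_{i=1}^{\NVariables}(1 - |\tilde z_i|)/2 = T/2 < 1.
\]
The constraint $A_{\SATBooleanFormula} \tilde z \ge -\OneVector$ translates (via $k_j + \ell_j = 3$) into $(A_{\SATBooleanFormula} \nu)_j \ge k_j - 2$, so $(A_{\SATBooleanFormula} \hat\nu)_j > k_j - 3$; since this quantity is an integer, $(A_{\SATBooleanFormula} \hat\nu)_j \ge k_j - 2 = 1 - (a_{\SATBooleanFormula})_j$, meaning $\hat\nu \in \{0, 1\}^{\NVariables}$ is a satisfying assignment. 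The main obstacle will be this rounding step, specifically the use of the integrality of $A_{\SATBooleanFormula}\hat\nu$ to upgrade the strict fractional inequality $> k_j - 3$ to the discrete bound $\ge k_j - 2$. A secondary bookkeeping task is to verify that the upper bound $\ReformFrac_i \le 1$ in the lower level never binds, so the closed-form $\ReformFrac_i = |x_i - 1/2|$ remains valid throughout the analysis.
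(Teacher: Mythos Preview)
Your argument is correct. For part~(ii) (the satisfiable case) you do exactly what the paper does: substitute $\ReformFrac_i=|x_i-1/2|$, use the box constraint $|x_i-1/2|\le y/2$ to get the global lower bound $\Phi\ge -y\ge -1$, and exhibit $x=\BinaryVector$, $y=1$ as a minimizer.

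For part~(i) (the unsatisfiable case) your route genuinely differs from the paper's. The paper does not argue directly: it introduces the unbounded auxiliary instance $P_{\text V}(\SATBooleanFormula)$ of Vicente \EtAl{} (without the box constraints $0\le x_i,y\le 1$), cites the result of Rodrigues \EtAl{} that $\OptValFunc(P_{\text V}(\SATBooleanFormula))=0$ when $\SATBooleanFormula$ is unsatisfiable, and then observes that $P_{\text V}(\SATBooleanFormula)$ is a relaxation of \eqref{eq:SATBLPFirst}, so $\OptValFunc\eqref{eq:SATBLPFirst}\ge 0$ as well. Your rounding argument is fully self-contained: after rescaling $\tilde z=2z/y$, you show that whenever $\sum_i(1-|\tilde z_i|)<2$ the threshold assignment $\hat\nu$ satisfies $\SATBooleanFormula$, so unsatisfiability forces $S\le n-2$ and hence $\Phi\ge y(3+2/n)>0$ for $y>0$. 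This is more elementary (no external citation needed) and in fact yields the stronger conclusion that in the unsatisfiable case $y=0$ is the \emph{unique} optimal value of the leader variable, whereas the paper only concludes optimality. The paper's approach, on the other hand, makes the genealogy of the construction transparent by linking it to the known unbounded instance. Both are short; yours is the more direct proof of the lemma as stated.
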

See \IPCO{the full version of the paper~\cite{sugishita2025complexitybilevellinearprogramming}}{Appendix~\ref{sec:proof_of_lemma_sat_multivariate_blp_formulation}} for the proof.

To simplify the notation in the following argument, we write $\ReformZ = (x, y) \in \mathbb{R}^{\NVariables + 1}$ and the constraints~\eqref{eq:SATBLPFirstConstraintOne} and~\eqref{eq:SATBLPFirstConstraintTwo} as $B_{\SATBooleanFormula} \ReformZ \ge b_{\SATBooleanFormula}$.
Furthermore, we introduce $\ReformAugFrac \in \mathbb{R}^{\NVariables + 1}$ such that $\ReformAugFrac_i = |\ReformZ_i - 1/2|$ for $i = 1, \ldots, \NVariables + 1$.
With this notation, \eqref{eq:SATBLPFirst} can be written as
\begin{align}
\min_{\ReformZ, \ReformAugFrac} \ & 2 \sum_{i = 1}^{n} \left(\ReformZ_{\NVariables + 1} - 2 \ReformAugFrac_i \right) - \frac{2}{n} \sum_{i = 1}^{\NVariables} \ReformAugFrac_i
\tag{P${}_{\text{mult}}^{\text{}}(\SATBooleanFormula)$}
\label{eq:SATBLPMultVariables}
\\
\text{s.t.} \ &
B_{\SATBooleanFormula} \ReformZ \ge b_{\SATBooleanFormula}, \
0 \le \ReformZ_i \le 1, \
\forall i = 1, \ldots, \NVariables + 1, 
\notag 
\\
&
\ReformAugFrac \in \Argmin_{\ReformAugFrac'} 
\left\{ 
\sum_{i = 1}^{\NVariables + 1} \ReformAugFrac_i'
:
\begin{array}{l}
-\ReformAugFrac_i' \le z_i - 1/2 \le \ReformAugFrac_i', 0 \le \ReformAugFrac_i' \le 1, \ i = 1, \ldots, \NVariables + 1
\end{array}
\right\}. \notag 
\end{align}
The formulation \eqref{eq:SATBLPMultVariables} differs from \eqref{eq:SATBLPFirst} by the inclusion of an additional variable, $\ReformAugFrac_{\NVariables + 1}$.  
Nonetheless, both problems attain the same optimal objective value.  
The additional variable is introduced purely for notational convenience in the analysis that follows.

Now, we use a lexicographic LP to replace the upper-level variables $\ReformZ$ in \eqref{eq:SATBLPMultVariables} with a single upper-level variable $\LexUVar$.
Consider
\begin{align}
\min_{\LexUVar, \ReformZ, \ReformAugFrac, \LexLVarS, \LexLVarT, \LexLVarU} \ & 2 \sum_{i = 1}^{n} \left(\ReformZ_{\NVariables + 1} - 2 \ReformAugFrac_i \right) - \frac{2}{n} \sum_{i = 1}^n \ReformAugFrac_i
\tag{P${}_{\text{lex}}^{\text{}}(\SATBooleanFormula)$}
\label{eq:SATBilevelLexProgramming}
\\
\text{s.t.} \ &
B_{\SATBooleanFormula} \ReformZ \ge b_{\SATBooleanFormula},
0 \le \LexUVar \le 1, \notag \\
&
(\ReformZ, \ReformAugFrac, \LexLVarS, \LexLVarT, \LexLVarU) 
\in
\OptSolSet\eqref{eq:SATBilevelLexProgrammingLowerLevel},
\notag 
\end{align}
where
\begin{alignat}{3}
\lexmin_{\ReformZ, \ReformAugFrac, \LexLVarS, \LexLVarT, \LexLVarU} \ & 
\mathmakebox[0cm][l]{
\left\{
\LexLVarS_{\NVariables + 1} + \LexLVarT_{\NVariables + 1}, \
\ldots, \
\LexLVarS_{1} + \LexLVarT_{1}, \
\sum_{i = 1}^{\NVariables + 1} \ReformAugFrac_i
\right\}
}
\tag{P${}_{\text{lex}}^{\text{low}}(\NVariables + 1, \LexUVar)$}
\label{eq:SATBilevelLexProgrammingLowerLevel}
\\
% ------------------------------------------------------------
\text{s.t.} \
&
\LexLVarU_{\NVariables + 1} = \theta,
\notag
\\
&
\LexLVarS_i \ge (3/2) \LexLVarU_{i} - 1/2,
\qquad
&&
\LexLVarT_i \ge (3/2) \LexLVarU_{i} - 1,
\qquad
&&
\forall i = 1, \ldots, \NVariables + 1,
\notag \\
&
z_i = 2 (\LexLVarS_i - \LexLVarT_i),
&&
-\ReformAugFrac_i \le z_i - 1/2 \le \ReformAugFrac_i, 
\quad
&&
\forall i = 1, \ldots, \NVariables + 1,
\notag \\
% --------------------------------------------
&
\LexLVarU_{i - 1} = 3 \LexLVarU_i - 2 z_i,
&&
&&
\forall i = 2, \ldots, \NVariables + 1, \notag \\
&
0 \le \ReformZ_i, \ReformAugFrac_i, \LexLVarS_i, \LexLVarT_i, \LexLVarU_i \le 1,
&&
&&
\forall i = 1, \ldots, \NVariables + 1,
\notag
\end{alignat}
with $\LexUVar \in \mathbb{R}$, $\ReformZ, \ReformAugFrac, \LexLVarS, \LexLVarT, \LexLVarU \in \mathbb{R}^{\NVariables + 1}$.
\phantomsection\label{Rev:Lexmin} Here, $\lexmin$ denotes the lexicographic minimization: the first objective is given highest priority, and later objectives are considered only when earlier ones are equal.
In~\eqref{eq:SATBilevelLexProgramming}, $\ReformZ$ denotes a lower-level variable. We intentionally use the same symbol $\ReformZ$, for reasons that will be clarified later.
For each $\theta \in [0, 1]$, \eqref{eq:SATBilevelLexProgrammingLowerLevel} has a unique solution, and can be solved analytically, as stated below.
\begin{lemma}
\label{lemma:LexLowerLevelSolution}
\mbox{}
\begin{enumerate}
\item 
\label{lemma:LexLowerLevelSolutionAssertionOne}
For any $\LexUVar \in [0, 1]$, it holds that
\begin{align*}
&
\OptSolSet\eqref{eq:SATBilevelLexProgrammingLowerLevel} =
\\
&
\
\left\{
\begin{pmatrix}
\ReformZ \\ \ReformAugFrac \\ \LexLVarS \\ \LexLVarT \\ \LexLVarU
\end{pmatrix}
:
\begin{array}{llllll}
&
\LexLVarU_{\NVariables + 1} = \theta,
\notag
\\
&
\LexLVarS_i = [(3/2) \LexLVarU_i - 1/2]^+, 
\quad
&&
\LexLVarT_i = [(3/2) \LexLVarU_i - 1]^+,
\quad
&&
\forall i = 1, \ldots, \NVariables + 1,
\\
&
z_i = \LexLVarFuncZ(\LexLVarU_{i}),
&&
\ReformAugFrac_i = |z_i - 1/2|,
&&
\forall i = 1, \ldots, \NVariables + 1,
\\
% --------------------------------------------
&
\LexLVarU_{i - 1} = \LexLVarFuncU(\LexLVarU_i),
&&
&&
\forall i = 2, \ldots, \NVariables + 1, \notag 
\notag 
\end{array}
\right\},
\end{align*}
where $[ a ]^+ = \max\{a, 0\}$, and
\begin{align*}
\LexLVarFuncZ(\LexLVarFuncInput{}') &= 
\begin{cases}
0, & 0 \le \LexLVarFuncInput{}' < 1/3, \\
3 \LexLVarFuncInput{}' - 1, & 1/3 \le \LexLVarFuncInput{}' < 2/3, \\
1, & 2/3 \le \LexLVarFuncInput{}' \le 1,
\end{cases}
\qquad
\LexLVarFuncU(\LexLVarFuncInput{}') &= 
\begin{cases}
3 \LexLVarFuncInput{}', & 0 \le \LexLVarFuncInput{}' < 1/3, \\
-3 \LexLVarFuncInput{}' + 2, & 1/3 \le \LexLVarFuncInput{}' < 2/3, \\
3 \LexLVarFuncInput{}' - 2, & 2/3 \le \LexLVarFuncInput{}' \le 1.
\end{cases}
\end{align*}
\item
\label{lemma:LexLowerLevelSolutionAssertionTwo}
Let $\LexUVar = 1/6$ and $(\ReformZ, \ReformAugFrac, \LexLVarS, \LexLVarT, \LexLVarU) \in \OptSolSet\eqref{eq:SATBilevelLexProgrammingLowerLevel}$.
Then, $\ReformZ_{\NVariables + 1} = 0$ and $\ReformZ_i = 1/2$ for $i = 1, \ldots, \NVariables$.
\item
\label{lemma:LexLowerLevelSolutionAssertionThree}
For any binary vector $\BinaryVector \in \{0, 1\}^{\NVariables}$, let $\LexUVar = (2/3) (1 + \sum_{i = 1}^{\NVariables} \BinaryVector_i/3^{\NVariables + 1 - i})$ and $(\ReformZ, \ReformAugFrac, \LexLVarS, \LexLVarT, \LexLVarU) \in \OptSolSet\eqref{eq:SATBilevelLexProgrammingLowerLevel}$.
Then, $\ReformZ_{\NVariables + 1} = 1$ and $\ReformZ_i = \BinaryVector_i$ for $i = 1, \ldots, \NVariables$.
\end{enumerate}
\end{lemma}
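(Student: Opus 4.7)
The plan is to exploit the sequential structure induced by the linking constraints $\LexLVarU_{i-1} = 3\LexLVarU_i - 2z_i$ together with $\LexLVarU_{\NVariables+1} = \theta$: once the chain $\LexLVarU_{\NVariables+1}, \ldots, \LexLVarU_1$ is fixed, every other variable of \eqref{eq:SATBilevelLexProgrammingLowerLevel} is determined by the remaining constraints. Since the lexicographic order processes the objectives $\LexLVarS_i + \LexLVarT_i$ from $i = \NVariables+1$ downward and leaves $\sum_i \ReformAugFrac_i$ to the end, I will carry out a backward induction on $i$ that matches the lex priority.

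For part~(i), at the top priority $\LexLVarU_{\NVariables+1} = \theta$ is fixed by the first constraint, and the lower bounds $\LexLVarS_{\NVariables+1} \ge \max\{0,(3/2)\theta - 1/2\}$ and $\LexLVarT_{\NVariables+1} \ge \max\{0,(3/2)\theta - 1\}$ are independent, so minimizing $\LexLVarS_{\NVariables+1} + \LexLVarT_{\NVariables+1}$ pins both variables to these lower bounds. A three-case split on $\theta \in [0,1/3) \cup [1/3,2/3) \cup [2/3,1]$ then delivers $z_{\NVariables+1} = 2(\LexLVarS_{\NVariables+1} - \LexLVarT_{\NVariables+1}) = \LexLVarFuncZ(\theta)$ and $\LexLVarU_{\NVariables} = 3\theta - 2z_{\NVariables+1} = \LexLVarFuncU(\theta)$; in every case both values lie in $[0,1]$, so the unit-cube bounds remain inactive. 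Iterating this argument layer by layer, with $\LexLVarU_i$ inherited from layer $i+1$, determines all $\LexLVarS_i, \LexLVarT_i, z_i, \LexLVarU_{i-1}$. At the lowest lex priority, $\sum_i \ReformAugFrac_i$ is minimized subject to $\ReformAugFrac_i \ge |z_i - 1/2|$ with the $z_i$ already fixed, which forces $\ReformAugFrac_i = |z_i - 1/2| \le 1/2$, compatible with $\ReformAugFrac_i \le 1$.

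Parts~(ii) and~(iii) then follow by instantiating the recursion $\LexLVarU_{i-1} = \LexLVarFuncU(\LexLVarU_i)$, $z_i = \LexLVarFuncZ(\LexLVarU_i)$ with the given $\theta$. For~(ii), $\theta = 1/6 \in [0,1/3)$ gives $z_{\NVariables+1} = 0$ and $\LexLVarU_{\NVariables} = 1/2$; the value $1/2$ is a fixed point of $\LexLVarFuncU$ with $\LexLVarFuncZ(1/2) = 1/2$, so $\LexLVarU_i = z_i = 1/2$ for every $i \le \NVariables$, which is exactly the claim. For~(iii), I will prove by backward induction on $k = \NVariables, \NVariables-1, \ldots, 1$ the invariant $\LexLVarU_k = 2 \sum_{i=1}^{k} \BinaryVector_i / 3^{k+1-i}$, initialized by computing $\theta \in [2/3, 1)$, $z_{\NVariables+1} = 1$, and $\LexLVarU_{\NVariables} = 3\theta - 2$. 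A geometric-series estimate shows that the tail $2\sum_{i=1}^{k-1} \BinaryVector_i / 3^{k+1-i}$ is strictly less than $1/3$, so the invariant places $\LexLVarU_k \in [0,1/3)$ when $\BinaryVector_k = 0$ and $\LexLVarU_k \in [2/3, 1)$ when $\BinaryVector_k = 1$. In either case $z_k = \LexLVarFuncZ(\LexLVarU_k) = \BinaryVector_k$, and the linking equation $\LexLVarU_{k-1} = 3\LexLVarU_k - 2z_k$ simplifies to the invariant one step further down.

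The main obstacle is the bookkeeping in part~(iii): the geometric-series estimate has to exclude $\LexLVarU_k \in [1/3, 2/3)$ at every layer, since otherwise $\LexLVarFuncZ(\LexLVarU_k)$ would be a nontrivial fractional value rather than the correct bit $\BinaryVector_k$, and the induction would derail. The calculation itself is routine, but the indexing of $\BinaryVector$ and the placement of the $3$-adic weights require careful attention.
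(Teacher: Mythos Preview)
Your proposal is correct and follows essentially the same route as the paper's proof: a layer-by-layer elimination that tracks the lexicographic priority, pinning $\LexLVarS_i,\LexLVarT_i$ to their lower bounds to determine $z_i$ and $\LexLVarU_{i-1}$, then fixing all $\ReformAugFrac_i$ at the last step; parts~(ii) and~(iii) are handled by the same fixed-point observation at $1/2$ and the same base-$3$ invariant $\LexLVarU_k=(2/3)\bigl(\BinaryVector_k+\sum_{i=1}^{k-1}\BinaryVector_i/3^{k-i}\bigr)$ together with the tail estimate $<1/3$. Your explicit remark that the resulting $z_i$ and $\LexLVarU_{i-1}$ stay in $[0,1]$ (so that the unit-cube constraints never bind) is a point the paper leaves implicit, but otherwise the arguments coincide.
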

The proof is given in \IPCO{the full version of the paper~\cite{sugishita2025complexitybilevellinearprogramming}}{Appendix~\ref{sec:ProofOfLemmaLexLowerLevelSolution}}.
For each $\LexUVar$, the lower-level problem~\eqref{eq:SATBilevelLexProgrammingLowerLevel} has a unique optimal solution.
Plots illustrating the relationship between $\LexLVarU$, $\ReformZ$ and $\LexUVar$ are provided in Figure~\ref{fig:plot}.
As $\LexUVar$ varies from 0 to 1, each component of the vector $\ReformZ$ takes values between 0 and 1. 
Collectively, the vector $\ReformZ$ traces all the vertices of the $(\NVariables + 1)$-dimensional unit cube, as well as the point $z = (x, y)$, where $x_i = 1/2$ for all $i = 1, \ldots, \NVariables$ and $y = 0$.
Notably, as $\LexUVar$ increases from 0 to 1, $\ReformZ$ visits all candidate solutions that may be optimal for~\eqref{eq:SATBLPMultVariables}, as established in Lemma~\ref{lemma:SATBLPFirst}. 
Therefore, despite \eqref{eq:SATBilevelLexProgramming} involving only a single upper-level variable, it is capable of ``searching'' for the solution to \eqref{eq:SATBLPMultVariables} by varying $\LexUVar$. 
Formally, we establish the following result.

\begin{figure}[btp]
\centering
\begin{tikzpicture}[
xscale=4,
yscale=4,
point/.style={circle,inner sep=0,minimum size=1mm,fill=black},
nodepadding/.style={circle,inner sep=0,minimum size=6mm},
unselected/.style={draw=black, densely dashed},
selected/.style={draw=black},
first/.style={ultra thick},
second/.style={ultra thick,blue,dashed},
third/.style={ultra thick,red,dotted},
]

\begin{scope}[shift={(0,0)}]
\foreach \i in {1,...,9} {
    \draw [very thin,gray] (\i/9,-0.0) -- (\i/9,1.1);
    \draw [very thin,gray] (-0.0,\i/9) -- (1.1,\i/9);
}
   
\draw[->] (-0.2,0) -- (1.2,0) node[above] {$\LexUVar$};
\draw[->] (0,-0.2) -- (0,1.2) node[above] {$\LexLVarU$};

\begin{scope}
\clip(0,0) rectangle (1,1);
\draw [first] (0,0) -- (1,1);
\draw [second] (0,0) -- (1/3,1) -- (2/3, 0) -- (1,1);
\draw [third] (0,0) -- (1/9,1) -- (2/9,0) -- (3/9,1) -- (4/9,0) -- (5/9,1) -- (6/9,0) -- (7/9,1) -- (8/9, 0) -- (9/9,1);
% \draw [thick,dotted] 
% (0/27,0) -- (1/27,1) -- (2/27, 0) -- (3/27,1) -- (4/27,0) -- (5/27,1) -- (6/27, 0) -- (7/27,1) -- (8/27, 0) -- (9/27,1) --
% (10/27,0) -- (11/27,1) -- (12/27, 0) -- (13/27,1) -- (14/27,0) -- (15/27,1) -- (16/27, 0) -- (17/27,1) -- (18/27, 0) -- (19/27,1) --
% (20/27,0) -- (21/27,1) -- (22/27, 0) -- (23/27,1) -- (24/27,0) -- (25/27,1) -- (26/27, 0) -- (27/27,1);
\end{scope}

\end{scope}

\begin{scope}[shift={(1.5,0)}]
\foreach \i in {1,...,9} {
    \draw [very thin,gray] (\i/9,-0.0) -- (\i/9,1.1);
    \draw [very thin,gray] (-0.0,\i/9) -- (1.1,\i/9);
}
   
\draw[->] (-0.2,0) -- (1.2,0) node[above] {$\LexUVar$};
\draw[->] (0,-0.2) -- (0,1.2) node[above] {$\ReformZ$};

\begin{scope}
% \clip(0,0) rectangle (1,1);
\draw [first] (0,0) -- (1/3,0) -- (2/3, 1) -- (1,1);
\draw [second] (0,0) -- (1/9,0) -- (2/9, 1) -- (3/9,1) -- (4/9,1) -- (5/9,0) -- (6/9, 0) -- (7/9,0) -- (8/9, 1) -- (9/9,1);
\draw [third]
(0/27,0) -- (1/27,0) -- (2/27, 1) -- (3/27,1) -- (4/27,1) -- (5/27,0) -- (6/27, 0) -- (7/27,0) -- (8/27, 1) -- (9/27,1) --
(10/27,1) -- (11/27,0) -- (12/27, 0) -- (13/27,0) -- (14/27,1) -- (15/27,1) -- (16/27, 1) -- (17/27,0) -- (18/27, 0) -- (19/27,0) --
(20/27,1) -- (21/27,1) -- (22/27, 1) -- (23/27,0) -- (24/27,0) -- (25/27,0) -- (26/27, 1) -- (27/27,1)
;
\end{scope}

\end{scope}

\end{tikzpicture}
\caption{Plot of $\LexLVarU$ and $\ReformZ$ in $\OptSolSet(\text{\hyperref[eq:SATBilevelLexProgrammingLowerLevel]{P${}_{\text{lex}}^{\text{low}}(3, \LexUVar)$}})$.  
The solid black lines represent $\LexLVarU_3$ and $\ReformZ_3$, the dashed blue lines represent $\LexLVarU_2$ and $\ReformZ_2$, and the dotted red lines represent $\LexLVarU_1$ and $\ReformZ_1$.
}
\label{fig:plot}
\end{figure}

\begin{lemma}
\label{lemma:PenToLex}
It holds that $\OptValFunc\eqref{eq:SATBLPMultVariables} = \OptValFunc\eqref{eq:SATBilevelLexProgramming}$.
\end{lemma}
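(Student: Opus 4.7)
The plan is to prove the two inequalities $\OptValFunc\eqref{eq:SATBilevelLexProgramming} \le \OptValFunc\eqref{eq:SATBLPMultVariables}$ and $\OptValFunc\eqref{eq:SATBilevelLexProgramming} \ge \OptValFunc\eqref{eq:SATBLPMultVariables}$ separately, using Lemmas~\ref{lemma:SATBLPFirst} and~\ref{lemma:LexLowerLevelSolution} as the main tools. The key observation driving both directions is that, by Lemma~\ref{lemma:LexLowerLevelSolution}(\ref{lemma:LexLowerLevelSolutionAssertionOne}), the lower-level problem of \eqref{eq:SATBilevelLexProgramming} always pins down $\ReformAugFrac_i = |\ReformZ_i - 1/2|$, which is precisely the unique minimizer of the lower-level problem of \eqref{eq:SATBLPMultVariables} at the corresponding $\ReformZ$; so the two formulations agree on the relation between $\ReformZ$ and $\ReformAugFrac$, and only their sets of admissible $\ReformZ$ need to be compared.

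For the inequality $\OptValFunc\eqref{eq:SATBilevelLexProgramming} \le \OptValFunc\eqref{eq:SATBLPMultVariables}$, I would exhibit, for each case of Lemma~\ref{lemma:SATBLPFirst}, a feasible point of \eqref{eq:SATBilevelLexProgramming} matching the stated optimum. When $\SATBooleanFormula$ is unsatisfiable, I would set $\LexUVar = 1/6$; Lemma~\ref{lemma:LexLowerLevelSolution}(\ref{lemma:LexLowerLevelSolutionAssertionTwo}) then forces $\ReformZ_i = 1/2$ for $i \le \NVariables$ and $\ReformZ_{\NVariables + 1} = 0$, which is exactly the optimum of \eqref{eq:SATBLPMultVariables}. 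When $\SATBooleanFormula$ is satisfiable with binary witness $\BinaryVector$, I would set $\LexUVar = (2/3)(1 + \sum_{i=1}^{\NVariables} \BinaryVector_i/3^{\NVariables + 1 - i})$; Lemma~\ref{lemma:LexLowerLevelSolution}(\ref{lemma:LexLowerLevelSolutionAssertionThree}) then forces $\ReformZ = (\BinaryVector, 1)$. In both cases, the induced $\ReformAugFrac$ is determined by the key observation above, so the objective value matches $\OptValFunc\eqref{eq:SATBLPMultVariables}$ exactly. It remains to verify upper-level feasibility: the bound $\LexUVar \in [0, 1]$ is immediate (for the satisfiable case one uses $\sum_{i=1}^{\NVariables} \BinaryVector_i/3^{\NVariables + 1 - i} < 1/2$), and the coupling constraint $B_{\SATBooleanFormula} \ReformZ \ge b_{\SATBooleanFormula}$ holds because the resulting $\ReformZ$ coincides with the $(x,y)$ part of a point already shown feasible for \eqref{eq:SATBLPMultVariables} in Lemma~\ref{lemma:SATBLPFirst}.

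For the reverse inequality, I would take any feasible point $(\LexUVar^*, \ReformZ^*, \ReformAugFrac^*, \LexLVarS^*, \LexLVarT^*, \LexLVarU^*)$ of \eqref{eq:SATBilevelLexProgramming} and show that $(\ReformZ^*, \ReformAugFrac^*)$ is feasible for \eqref{eq:SATBLPMultVariables} with the same objective value. The upper-level constraint $B_{\SATBooleanFormula} \ReformZ^* \ge b_{\SATBooleanFormula}$ and the bounds $0 \le \ReformZ_i^* \le 1$ transfer directly from the feasibility of the lex instance. By Lemma~\ref{lemma:LexLowerLevelSolution}(\ref{lemma:LexLowerLevelSolutionAssertionOne}), $\ReformAugFrac_i^* = |\ReformZ_i^* - 1/2|$, and since $|\ReformZ_i^* - 1/2| \le 1/2 \le 1$, this $\ReformAugFrac^*$ is precisely the unique optimum of the lower-level of \eqref{eq:SATBLPMultVariables} at $\ReformZ^*$. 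Thus $(\ReformZ^*, \ReformAugFrac^*)$ is feasible for \eqref{eq:SATBLPMultVariables} and attains value $\OptValFunc\eqref{eq:SATBilevelLexProgramming}$.

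I do not anticipate a genuine obstacle; the main delicate point is the bookkeeping in the forward direction, i.e.\ checking that the chosen $\LexUVar$ propagates through the recursion $\LexLVarU_{i-1} = \LexLVarFuncU(\LexLVarU_i)$ to produce the intended coordinates of $\ReformZ$. This follows by a direct inspection of the piecewise-linear functions $\LexLVarFuncZ$ and $\LexLVarFuncU$ displayed in Lemma~\ref{lemma:LexLowerLevelSolution}, combined with elementary arithmetic on the base-$3$ expansion of $\BinaryVector$.
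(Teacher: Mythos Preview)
Your proposal is correct and follows essentially the same route as the paper: the paper also splits into the two inequalities, uses the case analysis on satisfiability together with Lemma~\ref{lemma:LexLowerLevelSolution}\ref{lemma:LexLowerLevelSolutionAssertionTwo}--\ref{lemma:LexLowerLevelSolutionAssertionThree} to exhibit a feasible $\LexUVar$ matching $\OptValFunc\eqref{eq:SATBLPMultVariables}$, and for the reverse direction takes an arbitrary feasible point of \eqref{eq:SATBilevelLexProgramming} and observes that the induced $(\ReformZ,\ReformAugFrac)$ is feasible for \eqref{eq:SATBLPMultVariables} because $\ReformAugFrac_i = |\ReformZ_i - 1/2|$. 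The only cosmetic difference is that the paper does not spell out the verification that $\LexUVar \in [0,1]$ in the satisfiable case, which you do.
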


\begin{proof}
We first show that $\OptValFunc\eqref{eq:SATBLPMultVariables} \ge \OptValFunc\eqref{eq:SATBilevelLexProgramming}$.
Suppose $\SATBooleanFormula$ is satisfiable, and let $\BinaryVector$ be a binary vector satisfying $\SATBooleanFormula$, $\LexUVar = (2/3) (1 + \sum_{i = 1}^{\NVariables} \BinaryVector_i/3^{\NVariables + 1 - i})$ and $(\ReformZ, \ReformAugFrac, \LexLVarS, \LexLVarT, \LexLVarU) \in \OptSolSet\eqref{eq:SATBilevelLexProgrammingLowerLevel}$.
By Lemma~\ref{lemma:LexLowerLevelSolution}~\ref{lemma:LexLowerLevelSolutionAssertionThree}, we have $\ReformZ_i = \BinaryVector_i$ for $i = 1, \ldots, \NVariables$, $\ReformZ_{\NVariables + 1} = 1$, $\ReformAugFrac_i = |\ReformZ_i - 1/2| = 1/2$ for $i = 1, \ldots, \NVariables + 1$.
Therefore, $B_{\SATBooleanFormula} \ReformZ \ge b_{\SATBooleanFormula}$, and $(\LexUVar, \ReformZ, \ReformAugFrac, \LexLVarS, \LexLVarT, \LexLVarU)$ is feasible for~\eqref{eq:SATBilevelLexProgramming} and
$$
\OptValFunc\eqref{eq:SATBilevelLexProgramming}
\le
2 \sum_{i = 1}^{n} \left(\ReformZ_{\NVariables + 1} - 2 \ReformAugFrac_i \right) - \frac{2}{n} \sum_{i = 1}^n \ReformAugFrac_i
=
-1
=
\OptValFunc\eqref{eq:SATBLPMultVariables}.
$$
If $\SATBooleanFormula$ is unsatisfiable, we can show
$
\OptValFunc\eqref{eq:SATBilevelLexProgramming}
\le
0
=
\OptValFunc\eqref{eq:SATBLPMultVariables}
$
with $\LexUVar = 1/6$.

Next, we show that $\OptValFunc\eqref{eq:SATBLPMultVariables} \le \OptValFunc\eqref{eq:SATBilevelLexProgramming}$.
To this end, we prove that the objective value of any feasible solution to \eqref{eq:SATBilevelLexProgramming} is larger than or equal to $\OptValFunc\eqref{eq:SATBLPMultVariables}$.
Let $(\LexUVar, \ReformZ, \ReformAugFrac, \LexLVarS, \LexLVarT, \LexLVarU)$ be any feasible solution to \eqref{eq:SATBilevelLexProgramming}.
Then, we have $0 \le \ReformZ_i \le 1$, $\ReformAugFrac_i = |\ReformZ_i - 1/2|$ for $i = 1, \ldots, \NVariables + 1$, and $B_{\SATBooleanFormula} \ReformZ \ge b_{\SATBooleanFormula}$.
Therefore, $(\ReformZ, \ReformAugFrac)$ is feasible for \eqref{eq:SATBLPMultVariables}, and hence
$$
2 \sum_{i = 1}^{n} \left(\ReformZ_{\NVariables + 1} - 2 \ReformAugFrac_i \right) - \frac{2}{n} \sum_{i = 1}^n \ReformAugFrac_i
\ge \OptValFunc\eqref{eq:SATBLPMultVariables}.
$$
Since $(\LexUVar, \ReformZ, \ReformAugFrac, \LexLVarS, \LexLVarT, \LexLVarU)$ was an arbitrary feasible solution to \eqref{eq:SATBilevelLexProgramming}, it follows that $\OptValFunc\eqref{eq:SATBLPMultVariables} \le \OptValFunc\eqref{eq:SATBilevelLexProgramming}$.
\MyQED
\end{proof}

Next, we aim to reformulate the lexicographic LP~\eqref{eq:SATBilevelLexProgrammingLowerLevel} as a single-objective LP.  
There is a substantial body of literature on transforming lexicographic LPs into LPs.  
For example, Sherali~\cite{sherali1982equivalent} and Sherali and Soyster~\cite{sherali1983preemptive} explored approaches based on weighted sums of the objectives.  
However, these techniques are difficult to apply to~\eqref{eq:SATBilevelLexProgrammingLowerLevel}: Since the number of objectives increases with $\NVariables$, it is nontrivial to design appropriate weights of polynomial size to combine the objectives.
Nevertheless, since~\eqref{eq:SATBilevelLexProgrammingLowerLevel} has a special structure, we can derive an equivalent LP of polynomial size using a tailored argument.
Consider
\begin{align}
\min_{\LexUVar, \ReformZ, \ReformAugFrac, \LexLVarS, \LexLVarT, \LexLVarU} \ & 
2 \sum_{i = 1}^{n} \left(\ReformZ_{\NVariables + 1} - 2 \ReformAugFrac_i \right) - \frac{2}{n} \sum_{i = 1}^n \ReformAugFrac_i
\tag{P${}_{\text{weight}}^{\text{}}(\SATBooleanFormula)$}
\label{eq:SATBilevelWeightedProblem}
\\
\text{s.t.} \ &
B_{\SATBooleanFormula} \ReformZ \ge b_{\SATBooleanFormula}, 
0 \le \LexUVar \le 1, \notag \\
&
(\ReformZ, \ReformAugFrac, \LexLVarS, \LexLVarT, \LexLVarU) 
\in
\OptSolSet\eqref{eq:SATBilevelWeightedProblemLowerLevel},
\notag 
\end{align}
where
\begin{align}
\min_{\ReformZ, \ReformAugFrac, \LexLVarS, \LexLVarT, \LexLVarU} \ & 
\sum_{i = 1}^{\NVariables + 1}
\LexWeightValue^i (\LexLVarS_i + \LexLVarT_i)
+ 
\sum_{i = 1}^{\NVariables + 1} \frac{1}{4^{\NVariables - i + 1}} \ReformAugFrac_i
\tag{P${}_{\text{weight}}^{\text{low}}(\NVariables + 1, \LexUVar)$}
\label{eq:SATBilevelWeightedProblemLowerLevel}
\\
\text{s.t.} \
&
(\ReformZ, \ReformAugFrac, \LexLVarS, \LexLVarT, \LexLVarU)
\in
\FeasSolSet\eqref{eq:SATBilevelLexProgrammingLowerLevel}.
\notag 
\end{align}
We show that $\OptValFunc\eqref{eq:SATBilevelLexProgramming} = \OptValFunc\eqref{eq:SATBilevelWeightedProblem}$.
\begin{lemma}
\label{lemma:LexToWeighted}
\mbox{}
\begin{enumerate}
\item
\label{lemma:LexToWeightedAssertionOne}
For any $\NVariables \ge 1$ and $\LexUVar \in [0, 1]$, $\OptSolSet\eqref{eq:SATBilevelLexProgrammingLowerLevel} = \OptSolSet\eqref{eq:SATBilevelWeightedProblemLowerLevel}$.
\item
It holds that $\OptValFunc\eqref{eq:SATBilevelLexProgramming} = \OptValFunc\eqref{eq:SATBilevelWeightedProblem}$.
\end{enumerate}
\end{lemma}

See \IPCO{the full version of the paper~\cite{sugishita2025complexitybilevellinearprogramming}}{Appendix~\ref{sec:proof_of_lemma_lex_to_weighted}} for the proof.
As the last step, we remove $B_{\SATBooleanFormula} \ReformZ \ge b_{\SATBooleanFormula}$ from the upper-level problem using Lemma~\ref{lemma:PenaltyReformulation} and obtain
\begin{align}
\min_{\LexUVar, \ReformZ, \ReformAugFrac, \LexLVarS, \LexLVarT, \LexLVarU, \ReformPen} \ & 
2 \sum_{i = 1}^{n} \left(\ReformZ_{\NVariables + 1} - 2 \ReformAugFrac_i \right) - \frac{2}{n} \sum_{i = 1}^n \ReformAugFrac_i + M \ReformPen
\tag{P${}_{\text{}}^{\text{}}(\SATBooleanFormula)$}
\label{eq:SATBLPLast}
\\
\text{s.t.} \
&
(\ReformZ, \ReformAugFrac, \LexLVarS, \LexLVarT, \LexLVarU, \ReformPen) 
\in
\OptSolSet\eqref{eq:SATBLPLastLowerLevel},
\notag 
\end{align}
where
\begin{align}
\min_{\ReformZ, \ReformAugFrac, \LexLVarS, \LexLVarT, \LexLVarU, \ReformPen} \ & 
\sum_{i = 1}^{\NVariables + 1}
\LexWeightValue^i (\LexLVarS_i + \LexLVarT_i)
+ 
\sum_{i = 1}^{\NVariables + 1} \frac{1}{4^{\NVariables - i + 1}} \ReformAugFrac_i
\tag{P${}_{\text{}}^{\text{low}}(\SATBooleanFormula, \LexUVar)$}
\label{eq:SATBLPLastLowerLevel}
\\
\text{s.t.} \
&
B_{\SATBooleanFormula} \ReformZ + \ReformPen \OneVector \ge b_{\SATBooleanFormula},
0 \le \LexUVar \le 1, 
0 \le \ReformPen \le 1,
\notag \\
&
(\ReformZ, \ReformAugFrac, \LexLVarS, \LexLVarT, \LexLVarU)
\in
\FeasSolSet\eqref{eq:SATBilevelLexProgrammingLowerLevel},
\notag 
\end{align}
% Penalty coefficient
with $M = 3 (5 \NVariables + 7) 4^{\SolutionSizeEstimate(5 \NVariables + 3, \Size{6}) +1}$.
\phantomsection\label{Rev:WhyECanBeBounded} We note that the penalty term $e$ can be bounded from above (see Lemma~\ref{lemma:PenaltyReformulation}).

\begin{lemma}
\label{lemma:WeightedToPenalty}
It holds that $\OptValFunc\eqref{eq:SATBilevelWeightedProblem} = \OptValFunc\eqref{eq:SATBLPLast}$.
\end{lemma}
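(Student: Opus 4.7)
\begin{ProofSketch}
The plan is to apply Lemma~\ref{lemma:PenaltyReformulation} to \eqref{eq:SATBilevelWeightedProblem} with $\eta = 1$, identifying $\LexUVar$ with the single upper-level variable $x_2$, the tuple $(\ReformZ, \ReformAugFrac, \LexLVarS, \LexLVarT, \LexLVarU)$ with the lower-level vector $x_1$, the upper-level coupling constraint $B_{\SATBooleanFormula} \ReformZ \ge b_{\SATBooleanFormula}$ with the constraint to be penalized, and with no surviving upper-level constraints $(A_{21}, A_{22}, b_2)$. Setting $\bar{\ReformPen} = 1$, the penalty reformulation \eqref{eq:PenaltyAfter} produced by that lemma coincides verbatim with \eqref{eq:SATBLPLast}; the bounds $0 \le \LexUVar \le 1$, which are already implied in the lower level of \eqref{eq:SATBilevelLexProgrammingLowerLevel} through $\LexLVarU_{\NVariables+1} = \LexUVar$ together with $0 \le \LexLVarU_{\NVariables+1} \le 1$, migrate without change.

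The next step is to verify the hypotheses of Lemma~\ref{lemma:PenaltyReformulation}. Feasibility of both problems for every $M > 0$ follows from Lemma~\ref{lemma:LexLowerLevelSolution}: any $\LexUVar \in [0, 1]$ yields a unique lower-level response, and $\ReformPen = 1$ always satisfies the shifted coupling constraint since entries of $B_{\SATBooleanFormula}$ and $b_{\SATBooleanFormula}$ are bounded in absolute value. The boundedness of all variables to $[0, 1]$ guarantees that optima exist. I would then tally the size parameters: the total variable count is $5\NVariables + 6$, so the argument of $\SolutionSizeEstimate$ demanded by Lemma~\ref{lemma:PenaltyReformulation} is at most $5\NVariables + 7$; every coefficient in $B_{\SATBooleanFormula}$, $b_{\SATBooleanFormula}$, and in the constraint system of \eqref{eq:SATBilevelLexProgrammingLowerLevel} has encoding size at most $\Size{6}$; and the upper-level objective coefficients satisfy $c_\infty \le 2\NVariables + 4$. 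Monotonicity of $\SolutionSizeEstimate$ in both arguments then allows the chosen $M = 3(5\NVariables + 7) 4^{\SolutionSizeEstimate(5\NVariables + 3, \Size{6}) + 1}$ to dominate the threshold $3 \NVariables_{\mathrm{tot}} c_\infty 4^{\SolutionSizeEstimate(\NVariables_{\mathrm{tot}} + 1, \sigma)}$ of Lemma~\ref{lemma:PenaltyReformulation}, with the polynomial prefactor $\NVariables c_\infty$ absorbed into the extra factor $4$ contributed by the ``$+1$'' in the exponent.

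Lemma~\ref{lemma:PenaltyReformulation}(i) then gives $\OptValFunc\eqref{eq:SATBilevelWeightedProblem} = \OptValFunc\eqref{eq:SATBLPLast}$ when $\eta = 1$. To extend the claim to every $\eta$ in the admissible range $(0, 1]$, I would invoke Lemma~\ref{lemma:LexToWeighted}(ii), which asserts that $\OptValFunc\eqref{eq:SATBilevelWeightedProblem}$ is independent of $\eta$ on $(0, 1]$ and equal to $\OptValFunc\eqref{eq:SATBilevelLexProgramming}$. The main obstacle is the threshold calculation: one has to align carefully the arguments of $\SolutionSizeEstimate$ used in defining $M$ with those demanded by Lemma~\ref{lemma:PenaltyReformulation}, and verify that every multiplicative factor omitted from the compact expression is indeed dominated by monotonicity; everything else reduces to routine bookkeeping.
\end{ProofSketch}
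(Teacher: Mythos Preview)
Your proposal is correct and follows essentially the same route as the paper: a direct invocation of Lemma~\ref{lemma:PenaltyReformulation}, after observing that every constraint coefficient and right-hand side in~\eqref{eq:SATBilevelWeightedProblem} has encoding size at most $\Size{6}$. The paper's own proof is in fact a single sentence to that effect; your sketch simply makes the size accounting and the passage from $\eta = 1$ to all $\eta \in (0,1]$ via Lemma~\ref{lemma:LexToWeighted}(ii) explicit, which the paper leaves implicit.
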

The proof is in \IPCO{the full version of the paper~\cite{sugishita2025complexitybilevellinearprogramming}}{Appendix~\ref{sec:ProofOfLemmaWeightedToPenalty}}.
Note that the \EncodingSize{} of \eqref{eq:SATBLPLast} is polynomial in $\NVariables$.
\begin{lemma}
\label{lemma:ValueOfTransformedInstance}
\mbox{}
\begin{enumerate}
\item
Suppose $\SATBooleanFormula$ is satisfiable.
Then, $\OptValFunc\eqref{eq:SATBLPLast} = -1$.
\item
Suppose $\SATBooleanFormula$ is unsatisfiable.
Then, $\OptValFunc\eqref{eq:SATBLPLast} = 0$.
\end{enumerate}
\end{lemma}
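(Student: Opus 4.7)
The plan is to obtain the result as a simple chain of the optimal-value equalities already established by the preceding lemmas, combined with Lemma~\ref{lemma:SATBLPFirst} as the anchor. Concretely, I would argue:
\[
\OptValFunc\eqref{eq:SATBLPLast} \;=\; \OptValFunc\eqref{eq:SATBilevelWeightedProblem} \;=\; \OptValFunc\eqref{eq:SATBilevelLexProgramming} \;=\; \OptValFunc\eqref{eq:SATBLPMultVariables} \;=\; \OptValFunc\eqref{eq:SATBLPFirst},
\]
and then invoke Lemma~\ref{lemma:SATBLPFirst} to conclude that this common value is $-1$ when $\SATBooleanFormula$ is satisfiable and $0$ otherwise.

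To justify each equality in turn, I would first fix $\eta = 1$, which lies in $(0,1]$ and matches the coefficient of $\ReformAugFrac_i$ appearing in the lower-level objective of \eqref{eq:SATBLPLastLowerLevel}, so that \eqref{eq:SATBLPLast} coincides with the instance obtained by applying Lemma~\ref{lemma:PenaltyReformulation} to \eqref{eq:SATBilevelWeightedProblem} with this $\eta$. With this choice: Lemma~\ref{lemma:WeightedToPenalty} (whose hypothesis is $\eta \le 1$) gives the first equality; Lemma~\ref{lemma:LexToWeighted}(ii) (whose hypothesis is $\eta \in (0,1]$) gives the second; Lemma~\ref{lemma:PenToLex} gives the third; and the remark recorded immediately after the definition of \eqref{eq:SATBLPMultVariables} (namely that adding the auxiliary variable $\ReformAugFrac_{\NVariables + 1}$ does not change the optimal value, since its optimal choice $|\ReformZ_{\NVariables+1} - 1/2|$ does not appear in the upper-level objective) gives the fourth.

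Finally, Lemma~\ref{lemma:SATBLPFirst}(ii) shows $\OptValFunc\eqref{eq:SATBLPFirst} = -1$ when $\SATBooleanFormula$ is satisfiable, and Lemma~\ref{lemma:SATBLPFirst}(i) shows $\OptValFunc\eqref{eq:SATBLPFirst} = 0$ when $\SATBooleanFormula$ is unsatisfiable, delivering both assertions of the lemma.

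I do not expect any real obstacle: the entire content of this lemma is to collect the reductions already performed. The only point worth double-checking is that the specific constant $M = 3(5\NVariables + 7)\,4^{\SolutionSizeEstimate(5\NVariables + 3,\Size{6})+1}$ hard-coded into \eqref{eq:SATBLPLast} is large enough to meet the threshold of Lemma~\ref{lemma:PenaltyReformulation} when that lemma is instantiated for \eqref{eq:SATBilevelWeightedProblem}; this bookkeeping has already been absorbed into the statement of Lemma~\ref{lemma:WeightedToPenalty}, so no further verification is needed here.
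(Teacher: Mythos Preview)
Your proposal is correct and matches the paper's own proof, which is the one-line remark ``It follows from Lemmata~\ref{lemma:SATBLPFirst}--\ref{lemma:WeightedToPenalty}.'' You have simply made the chain of equalities explicit, including the choice $\eta=1$ and the observation that the auxiliary variable $\ReformAugFrac_{\NVariables+1}$ does not affect the optimal value; there is nothing to add.
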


\begin{proof}
It follows from Lemmata~\ref{lemma:SATBLPFirst}-\ref{lemma:WeightedToPenalty}.
\MyQED
\end{proof}

Theorem~\ref{theorem:ValueOfBLPWithSingleUpperLevelVariableIsNPHard} follows immediately from the above argument.

\begin{proof}[Proof of Theorem~\ref{theorem:ValueOfBLPWithSingleUpperLevelVariableIsNPHard}]
\phantomsection\label{Rev:LogSpaceReduction}
The inclusion in \ClassNP{} follows from~\cite{Buchheim2023}.
% the fact that the decision version of general bilevel LP is in \ClassNP{} \cite{Buchheim2023}.
By Lemma~\ref{lemma:ValueOfTransformedInstance}, the above transformation gives a logarithmic-space reduction \cite{papadimitriou1993computational} from \ThreeSAT{} to \ValueOfBLPWithSingleUpperLevelVariable{}.
Since \ThreeSAT{} is \ClassNP{}-complete, \ValueOfBLPWithSingleUpperLevelVariable{} is \ClassNP{}-hard.
\MyQED
\end{proof}

A few remarks are in order.
First, \phantomsection\label{Rev:Approximation} by adding 1 (or, equivalently, by adding a variable fixed to 1) to the objective of~\eqref{eq:SATBLPLast}, one can obtain an instance whose optimal objective value is $0$ if $\SATBooleanFormula$ is satisfiable and $1$ otherwise.
Thus, even approximate versions of \ValueOfBLPWithSingleUpperLevelVariable{} are $\ClassNP$-hard (See Corollary 4.6 in Jeroslow~\cite{Jeroslow1985}).
Second, instance~\eqref{eq:SATBLPLast} contains constants of exponential magnitude.  
% As a result, there is a possibility that a bilevel LP with a single variable and no upper-level constraints is not strongly \ClassNP{}-hard.  
Consequently, a bilevel LP with a single variable and no upper-level constraints may not be strongly \ClassNP{}-hard.
In particular, such instances may admit pseudo-polynomial algorithms.
\phantomsection\label{Rev:Pessimistic} Third, our proof of \ClassNP{}-hardness for the decision version of the optimistic formulation of bilevel LP also carries over to the pessimistic formulation. Recall that in the pessimistic formulation the follower, when faced with multiple optimal solutions, selects the one least favorable to the leader. Since the lower-level problem in our construction has a unique optimal solution, the optimal objective value of the pessimistic variant of \eqref{eq:SATBLPLast} coincides with that of the optimistic variant.
Fourth, the argument of Basu \EtAl{}~\cite{BasuEtAl2021} shows that the feasible set of a bilevel LP is the union of at most exponentially many polyhedra.
We observe that instance~\eqref{eq:SATBLPLast}, a bilevel LP with a single upper-level variable and no upper-level constraints, requires exponentially many polyhedra to describe its feasible set.
Formally, we have the following result, proven in \IPCO{the full version of the paper~\cite{sugishita2025complexitybilevellinearprogramming}}{Appendix~\ref{sec:ProofOfPropNumberOfPolyhedra}}.
\begin{proposition}
\label{prop:NumberOfPolyhedra}
Let $q$ be the smallest integer such that there exists polyhedra $Q_1, \ldots, Q_q$ satisfying $\FeasSolSet\eqref{eq:SATBLPLast} = \cup_{j = 1 \ldots, q} Q_j$.
Then, $q \ge 2^{n}$.
In other words, any representation of $\FeasSolSet\eqref{eq:SATBLPLast}$ as a union of polyhedra requires exponentially many polyhedra in $n$.
\end{proposition}

\section{Reformulation of Combinatorial Optimization Problem}
\label{sec:ImplicationOfTheorem}

In this section, we briefly present a reformulation of combinatorial optimization problems into instances of \BLPWithSingleUpperLevelVariable{}.
\phantomsection\label{Rev:MotivationOfReformulationOfCombProblem} Given a \ClassNP{}-hard combinatorial optimization problem, one can construct a reduction from its decision version to the decision version of \BLPWithSingleUpperLevelVariable{}.
However, by exploiting the techniques developed in Section~\ref{sec:ProofOfTheorem1}, one can in fact design a transformation that preserves optimal solutions, in the sense that an optimal solution to the original combinatorial problem can be constructed from one for the resulting \BLPWithSingleUpperLevelVariable{} instance, and vice versa.
Consider the following 0-1 integer LP:
\begin{align}
\min_{\ReformZ} \{ c^{\top} \ReformZ 
:
A \ReformZ \ge a, \ReformZ \in \{0, 1\}^\IPNVariables\},
\tag{C}
\label{eq:Knapsack}
\end{align}
where $c \in \mathbb{Q}^{\IPNVariables}$, $A \in \mathbb{Q}^{\NConstraints \times \IPNVariables}$, and $a \in \mathbb{Q}^{\NConstraints}$.  
We assume that \eqref{eq:Knapsack} has at least one feasible solution, which holds when it is a 0-1 integer LP formulation of the knapsack or traveling salesman problem over a fully-connected graph.
By appropriate scaling if necessary, we may assume $a_j \in [-1,1]$ for all $j = 1, \ldots, \NConstraints$.  
We first replace the binary variables using the lexicographic LP~(\hyperref[eq:SATBilevelLexProgrammingLowerLevel]{P${}_{\text{lex}}^{\text{low}}(\IPNVariables, \LexUVar)$}) or its LP equivalent~(\hyperref[eq:SATBilevelWeightedProblemLowerLevel]{P${}_{\text{weight}}^{\text{low}}(\IPNVariables, \LexUVar)$}) (Lemma~\ref{lemma:LexToWeighted}~\ref{lemma:LexToWeightedAssertionOne} is stated for $\IPNVariables \ge 2$ but it is straightforward to show that it also holds for $\IPNVariables = 1$):
\begin{align}
\min_{\LexUVar, \ReformZ, \ReformAugFrac, \LexLVarS, \LexLVarT, \LexLVarU} \ & c^{\top} \ReformZ 
\tag{C$_{\text{mult}}$}
\label{eq:KnapsackMiddle}
\\
\text{s.t.} \ 
&
A \ReformZ \ge a, 0 \le \LexUVar \le 1, \ReformAugFrac_{i} \ge 1/2, \ \forall i = 1, \ldots, n,
\label{eq:KnapsackBLPWithUpperLevelConstraintConstraint}
\\
&
(\ReformZ, \ReformAugFrac, \LexLVarS, \LexLVarT, \LexLVarU) \in 
\OptSolSet(\text{\hyperref[eq:SATBilevelWeightedProblemLowerLevel]{P${}_{\text{weight}}^{\text{low}}(\IPNVariables, \LexUVar)$}}),
\notag
\end{align}
where $\LexUVar \in \mathbb{R}$, and $\ReformZ, \ReformAugFrac, \LexLVarS, \LexLVarT, \LexLVarU \in \mathbb{R}^{\IPNVariables}$.  
For each $i = 1, \ldots, \IPNVariables$, the optimality of the lower-level problem implies $\ReformAugFrac_i = |\ReformZ_i - 1/2|$, thus the condition $\ReformAugFrac_i \geq 1/2$ enforces $\LexUVar$ to be such that $\ReformZ_i \in \{0,1\}$.  
Next, we replace the constraints~\eqref{eq:KnapsackBLPWithUpperLevelConstraintConstraint} with a penalty term added to the objective:
\begin{align}
\min_{\LexUVar, \ReformZ, \ReformAugFrac, \LexLVarS, \LexLVarT, \LexLVarU, \ReformPen}
\{
c^{\top} \ReformZ + M \ReformPen 
:
(\ReformZ, \ReformAugFrac, \LexLVarS, \LexLVarT, \LexLVarU, \ReformPen) \in 
\OptSolSet\eqref{eq:KnapsackLastLowerLevel}
\},
\tag{C$_{\mathrm{single}}$}
\label{eq:KnapsackLast}
\end{align}
where
\begin{align}
\min_{\ReformZ, \ReformAugFrac, \LexLVarS, \LexLVarT, \LexLVarU, \ReformPen} \ & 
\sum_{i = 1}^{\IPNVariables}
\LexWeightValue^i (\LexLVarS_i + \LexLVarT_i)
+ 
\sum_{i = 1}^{\IPNVariables} \frac{1}{4^{\IPNVariables - i}} \ReformAugFrac_i
\tag{C${}_{\text{single}}^{\text{low}}(\LexUVar)$}
\label{eq:KnapsackLastLowerLevel}
\\
\text{s.t.} \
&
A \ReformZ + \ReformPen \OneVector \ge a, \ReformAugFrac_{i} + \ReformPen \ge 1/2, \ \forall i = 1, \ldots, \IPNVariables, 
\notag
\\
&
0 \le \LexUVar \le 1, 
0 \le \ReformPen \le 1,
\notag \\
&
(\ReformZ, \ReformAugFrac, \LexLVarS, \LexLVarT, \LexLVarU)
\in
\FeasSolSet(\text{\hyperref[eq:SATBilevelLexProgrammingLowerLevel]{P${}_{\text{lex}}^{\text{low}}(\IPNVariables, \LexUVar)$}}).
\notag 
\end{align}
and $M$ is a penalty coefficient.  
It is straightforward to verify that both instances \eqref{eq:KnapsackMiddle} and \eqref{eq:KnapsackLast} admit optimal solutions for any value of $M$.  
For example, \eqref{eq:KnapsackLast} has a feasible solution given by $\LexUVar = 0$, $\ReformZ = \ReformAugFrac = \LexLVarS = \LexLVarT = \LexLVarU = \ZeroVector$, and $\ReformPen = 1$.  
Moreover, \eqref{eq:KnapsackLast} cannot be unbounded since all the variables are bounded, thus guaranteeing the existence of an optimal solution.  
Therefore, by invoking Lemma~\ref{lemma:PenaltyReformulation}, one can compute a penalty coefficient $M$ of polynomial size (in the size of~\eqref{eq:Knapsack}) such that $\OptValFunc\eqref{eq:Knapsack} = \OptValFunc\eqref{eq:KnapsackLast}$.
In fact, it is straightforward to see the following: 
$\OptSolSet\eqref{eq:KnapsackLast} = \{ (\LexUVar, \ReformZ, \ReformAugFrac, \LexLVarS, \LexLVarT, \LexLVarU, \ReformPen) \in \FeasSolSet\eqref{eq:KnapsackLast} : \ReformZ\in \OptSolSet\eqref{eq:Knapsack} \}$.

\section{Complexity of Local Search}
\label{sec:ComplexityOfVerifyingASolution}

To further understand the computational complexity of \BLPWithSingleUpperLevelVariable{}, in the remainder of the paper, we study the complexity of searching for a solution.
We begin by discussing the \ClassCoNP{}-completeness of deciding whether a given point is a globally optimal solution.
We then turn our attention to the problem of finding a locally optimal solution.
We show that it is possible to compute a locally optimal solution in polynomial time.

Consider the following decision problem:
\begin{align*}
&
\parbox{0.75\textwidth}{Given a rational instance \eqref{eq:NotationBLP} and a rational point $(x_1, x_2)$, is it an optimal solution?}
\label{GlobalOptimalProblemAnchor}
\tag{\GlobalOptimalProblemName{}}
\end{align*}

Using instance~\eqref{eq:SATBLPLast}, we obtain the following result, which is proven in \IPCO{the full version of the paper~\cite{sugishita2025complexitybilevellinearprogramming}}{Appendix~\ref{sec:ProofOfTheoremComplexityOfVerifyingOptimalSolution}}.
\begin{proposition}
\label{theorem:ComplexityOfVerifyingOptimalSolution}
The decision problem \GlobalOptimalProblem{} is \ClassCoNP{}-complete.
\end{proposition}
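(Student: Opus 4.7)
The plan is to establish both \ClassCoNP{}-membership and \ClassCoNP{}-hardness for \GlobalOptimalProblem{}.

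For \ClassCoNP{}-membership, I show that the complement---``is $(x_1, x_2)$ not an optimal solution of~\eqref{eq:NotationBLP}?''---lies in \ClassNP{}. A succinct certificate consists of either a bilevel-feasible point $(x_1', x_2')$ with strictly smaller upper-level objective value, or a witness that $(x_1, x_2)$ itself is infeasible (i.e., a lower-level point satisfying the lower-level constraints at $x_2$ with strictly smaller lower-level objective). By Lemmata~\ref{lemma:estimate-of-primal-linear-inequality} and~\ref{lemma:SolutionSizeEstimate}, both types of witness admit representations of size polynomial in the input, and their verification---including the lower-level optimality check for $x_1'$ at $x_2'$---can be performed by solving a single LP, hence runs in polynomial time.

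For \ClassCoNP{}-hardness, I reduce from the complement of \ThreeSAT{}. Given a 3CNF formula $\SATBooleanFormula$, I construct the instance~\eqref{eq:SATBLPLast} from Section~\ref{sec:ProofOfTheorem1} (whose encoding size is polynomial in $\Size{\SATBooleanFormula}$) together with the candidate solution determined by the upper-level value $\LexUVar = 1/6$. By Lemma~\ref{lemma:LexLowerLevelSolution}(ii), the unique optimum of the underlying weighted/lex lower-level problem at this value satisfies $\ReformZ_i = 1/2$ for $i = 1, \ldots, \NVariables$ and $\ReformZ_{\NVariables+1} = 0$, with $(\ReformAugFrac, \LexLVarS, \LexLVarT, \LexLVarU)$ given by the formulas in Lemma~\ref{lemma:LexLowerLevelSolution}(i); I pair these values with the penalty variable $\ReformPen = 0$ to obtain the candidate lower-level vector $x_1$.

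A short calculation based on the 3CNF encoding---each 3-clause contributes exactly $3/2$ to the corresponding row of $(1/2)A_{\SATBooleanFormula}\OneVector + a_{\SATBooleanFormula}$, yielding $(1/2)A_{\SATBooleanFormula}\OneVector + a_{\SATBooleanFormula} = (3/2)\OneVector$---shows that the penalty constraints $B_{\SATBooleanFormula}\ReformZ + \ReformPen\OneVector \ge b_{\SATBooleanFormula}$ of~\eqref{eq:SATBLPLastLowerLevel} hold with equality when $\ReformPen = 0$. Since $\ReformPen$ does not appear in the lower-level objective, the optimistic leader may select $\ReformPen = 0$ from the lower-level $\Argmin$ set, so the candidate is bilevel-feasible with upper-level objective $0$. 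Lemma~\ref{lemma:ValueOfTransformedInstance} then gives $\OptValFunc\eqref{eq:SATBLPLast} = -1$ if $\SATBooleanFormula$ is satisfiable and $0$ otherwise, so the candidate is globally optimal if and only if $\SATBooleanFormula$ is unsatisfiable, completing the reduction. The main obstacle I anticipate is this feasibility verification: confirming that the optimistic selection of $\ReformPen$ at $\LexUVar = 1/6$ is indeed $0$ relies on the arithmetic identity above and on the specific structure of the 3CNF encoding into $(A_{\SATBooleanFormula}, a_{\SATBooleanFormula})$; all remaining steps follow routinely from results already established in the excerpt.
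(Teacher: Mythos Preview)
Your proposal is correct and follows essentially the same approach as the paper: the paper likewise shows \ClassNP{}-membership of the complement (deferring the details to~\cite{Buchheim2023}) and proves hardness by the same reduction, taking instance~\eqref{eq:SATBLPLast} together with the candidate at $\LexUVar = 1/6$ with $\ReformPen = 0$, and invoking Lemma~\ref{lemma:ValueOfTransformedInstance}. Your write-up is in fact more explicit than the paper's on two points---the polynomial-size certificate for non-optimality and the $3$-clause arithmetic $(1/2)A_{\SATBooleanFormula}\OneVector + a_{\SATBooleanFormula} = (3/2)\OneVector$ needed to verify bilevel feasibility of the candidate---both of which the paper asserts without detail.
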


Proposition~\ref{theorem:ComplexityOfVerifyingOptimalSolution} shows that merely deciding whether a given point is an optimal solution is \ClassCoNP{}-complete.
Therefore, searching for a globally optimal solution is likely to be computationally intractable.
A natural question is whether replacing global optimality with local optimality leads to a tractable problem.

We say that $(\bar{x}_1, \bar{x}_2) \in \mathbb{R}^{\NVariables + 1}$ is a \emph{locally optimal solution} to \eqref{eq:NotationBLP} if $(\bar{x}_1, \bar{x}_2) \in \FeasSolSet\eqref{eq:NotationBLP}$ and there exists $\epsilon > 0$ such that
$$
c_{2 1}^{\top} \bar{x}_1 + c_{2 2} \bar{x}_2 \le c_{2 1}^{\top} x_1 + c_{2 2} x_2,
\quad \forall\, (x_1, x_2) \in \FeasSolSet\eqref{eq:NotationBLP} \cap \mathcal{B}_{\epsilon}(\bar{x}_1, \bar{x}_2),
$$
where
$
\mathcal{B}_{\epsilon}(\bar{x}_1, \bar{x}_2) = \{ (x_1, x_2) : \| (x_1, x_2) - (\bar{x}_1, \bar{x}_2) \|_{\infty} \le \epsilon \}.
$
The following result holds.
\begin{theorem}
\label{theorem:ComplexityOfLocalSearch}
There exists a polynomial-time algorithm that finds a locally optimal solution to a rational instance of~\eqref{eq:NotationBLP}.
\end{theorem}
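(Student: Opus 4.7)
The plan is to parametrize the problem by the single upper-level variable $x_2 \in [0,1]$ and reduce the local-search task to a descent scheme whose optimality check and step computation are both LP-based. Let $v(x_2) = \min\{c_{1 1}^{\top} x_1 : A_{1 1} x_1 + A_{1 2} x_2 \ge b_1,\, \ZeroVector \le x_1 \le \OneVector\}$, which is convex and piecewise linear on the lower-level feasibility interval $[L,U] \subseteq [0,1]$; both $L$ and $U$ can be computed by single LPs, and write $S(x_2)$ for the lower-level optimal set at $x_2$.

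The central observation is that, because $x_2$ is a scalar, at any bilevel-feasible $(\bar{x}_1, \bar{x}_2)$ the set of first-order feasible directions $(d_1, d_2)$---i.e., pairs such that $(\bar{x}_1 + \epsilon d_1, \bar{x}_2 + \epsilon d_2) \in \FeasSolSet\eqref{eq:NotationBLP}$ for all sufficiently small $\epsilon > 0$---is polyhedral once the sign of $d_2$ is fixed. Indeed, the one-sided directional derivatives $v'_+(\bar{x}_2)$ and $v'_-(\bar{x}_2)$ of the convex piecewise linear function $v$ at $\bar{x}_2$ are rational constants computable by standard LP-sensitivity analysis, so the lower-level optimality condition along the direction reduces to a linear equality $c_{1 1}^{\top} d_1 = v'_\pm(\bar{x}_2)\, d_2$, which, combined with the usual direction constraints coming from the active box and inequality constraints at $(\bar{x}_1, \bar{x}_2)$, gives a polyhedral cone of feasible directions. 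Consequently, local optimality at $(\bar{x}_1, \bar{x}_2)$ is equivalent to two directional LPs (one for $d_2 \ge 0$, one for $d_2 \le 0$) of the form $\min\{c_{2 1}^{\top} d_1 + c_{2 2} d_2 : (d_1, d_2) \in \text{feasible-direction cone}\}$ having nonnegative optimal values; when either value is negative, the same LP also exhibits a descent direction.

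The proposed algorithm is: (i) compute $[L,U]$ by LPs and obtain an initial bilevel-feasible iterate $(x_1^{(0)}, x_2^{(0)})$ by picking any feasible $x_2^{(0)} \in [L,U]$ and solving a lexicographic LP (first minimize $c_{1 1}^{\top} x_1$, then $c_{2 1}^{\top} x_1$) for the optimistic $x_1^{(0)} \in S(x_2^{(0)})$; (ii) at iterate $(x_1^{(t)}, x_2^{(t)})$ solve the two directional LPs; (iii) if both return nonnegative optimal values, return the current iterate as a local optimum; (iv) otherwise advance $(x_1^{(t+1)}, x_2^{(t+1)}) = (x_1^{(t)} + \epsilon d_1,\, x_2^{(t)} + \epsilon d_2)$ with $\epsilon$ chosen by a ratio test that detects either a change of the active constraint set of the lower level, a change in the slope of $v$, or the arrival at the boundary of $[L,U]$, and repeat.

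The main obstacle will be bounding the number of outer iterations by a polynomial. Each iteration strictly decreases the bilevel objective and, by Lemma~\ref{lemma:SolutionSizeEstimate}, every iterate is a rational of polynomial bit-size, so the iterates are pairwise distinct; however, the number of distinct lower-level optimal bases can in principle be exponential, so a naive basis-counting argument fails. I expect the resolution to exploit the one-dimensionality of the descent path---specifically, that once a descent direction is taken the iterate moves $x_2$ monotonically, so the total motion in $x_2$ is at most $U - L \le 1$---combined either with an amortization argument leveraging the polynomial bit-size guaranteed by Lemma~\ref{lemma:SolutionSizeEstimate} to lower-bound the step taken at each iteration, or with a refinement that, at each outer iteration, directly jumps to a candidate local-optimum $x_2$-value among a polynomial-size family of rationals computable from the input data; pinning down this bound is the main technical hurdle.
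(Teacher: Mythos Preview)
Your local-optimality test via two directional LPs is correct and is exactly the kind of oracle the paper also relies on (it computes one-sided derivatives of the scalar value function $\psi(x_2)=\OptValFunc\eqref{eq:NotationBLP}\,|_{x_2\text{ fixed}}$). The gap is precisely where you flag it: the descent scheme has no polynomial iteration bound, and neither of your proposed fixes closes it. First, the amortization idea fails because $\psi$ can have \emph{exponentially many} breakpoints on $[L,U]$---this is essentially the content of Proposition~\ref{prop:NumberOfPolyhedra} (the lower-level optimal face changes exponentially often as $\theta$ sweeps $[0,1]$ in the SAT construction). Even though each breakpoint has polynomial bit-size (so consecutive breakpoints are at least $2^{-\mathrm{poly}}$ apart), that still allows $2^{\mathrm{poly}}$ of them in an interval of length $1$; a monotone walk over breakpoints can therefore take exponentially many steps while strictly decreasing the objective each time. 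Second, the ``polynomial-size family of candidate $x_2$-values'' idea does not survive for the same reason: the set of breakpoints, while individually small, is not a polynomial-size family.

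The paper avoids descent entirely and instead runs a \emph{binary search} on $x_2$. It shows that $\psi$ is continuous and piecewise linear with all breakpoints rational of polynomial size (Lemma~\ref{lemma:PiecewiseLinearityOfPartialValueFunction}), and that its one-sided derivatives are computable in polynomial time (Lemma~\ref{lemma:OneSidedDerivativeOfPartialOptValFunction}, using the continued-fraction method to find a nearby breakpoint). Maintaining the invariant that the current interval $[l,u]$ contains an interior local minimum (certified by the signs of $\psi'_+(l)$ and $\psi'_-(u)$), it bisects: the one-sided derivatives at the midpoint tell you which half still contains a local minimum. After $O(s)$ bisections the interval has length below $2^{-(2s+1)}$, hence contains at most one rational of size $\le s$; continued fractions recover that unique breakpoint, which is the sought local optimum. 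The key conceptual move you are missing is that the \emph{bit-size} bound on breakpoints---not a cardinality bound---is what makes the problem tractable, and that binary search exploits the former directly whereas descent implicitly needs the latter.
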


See \IPCO{the full version of the paper~\cite{sugishita2025complexitybilevellinearprogramming}}{Appendix~\ref{sec:ProofOfTheoremComplexityOfLocalSearch}} for the proof.
We show that if there is a local optimal solution, there is a rational one of polynomial size.
Thus, we apply a binary search on the upper-level variable $x_2$ to find a local optimal solution.
This result contrasts with a well-known result by Vicente \EtAl{}~\cite{vicente1994descent}, which shows that recognizing local optimality is \ClassNP{}-hard in the general case.
% Similarly, \cite{ProkopyevRalphs2024} showed that search for a local optimal solution is \ClassNP{}-hard.
Thus, Theorem~\ref{theorem:ComplexityOfLocalSearch} suggests a fundamental difference between general bilevel LPs and \BLPWithSingleUpperLevelVariable{}.

\begin{credits}
\subsubsection{\ackname} 
This study was funded by the FRQ-IVADO Research Chair in Data Science for Combinatorial Game Theory.

\subsubsection{\discintname}
The authors have no competing interests to declare that are
relevant to the content of this article.
\end{credits}
%
% ---- Bibliography ----
%
% BibTeX users should specify bibliography style 'splncs04'.
% References will then be sorted and formatted in the correct style.
%
\bibliographystyle{splncs04}
\bibliography{references}

\ifIPCOFlag
\else
\clearpage

\begin{appendix}

\section{Proof of Lemma~\ref{lemma:SolutionSizeEstimate}}
\label{sec:proof_of_lemma_solution_size_estimate}

% To prove Lemma~\ref{lemma:SolutionSizeEstimate}, we use the following result.
% \begin{lemma}
% \label{lemma:estimate-of-primal-linear-inequality}
% Let $A \in \mathbb{Q}^{\NConstraints \times \NVariables}$, $b \in \mathbb{Q}^{\NConstraints}$, $c \in \mathbb{Q}^{\NVariables}$, and $\sigma$ be the maximum \EncodingSize{} of the entries in $A$ and $b$.
% % p134
% % where each inequality in Ax 5b has size at most p.
% % P44
% % each row of the matrix [A b] has size at most p
% % \begin{enumerate}
% % \item
% % If $Q$ is nonempty, then $Q$ contains a rational point of size polynomially bounded by $\sigma$ and $n$.
% % \item 
% % All the vertices and extreme rays of $Q$ are rational of \EncodingSize{} polynomially bounded by $\sigma$ and $n$.
% % \item 
% If $\min_{x \in \mathbb{R}^{\NVariables}} \{ c^{\top} x : A x \ConstraintSense b\}$ has an optimal solution, it has a rational optimal solution whose \EncodingSize{} is less than or equal to $4\NVariables^2(\NVariables+1)(\sigma+1)$.
% % \end{enumerate}
% \end{lemma}

\begin{proof}
The feasible set of $\eqref{eq:SolutionSizeEstimateBLP}$ is the union of polyhedra~\cite{BasuEtAl2021}.
Indeed,
\begin{align*}
\FeasSolSet\eqref{eq:SolutionSizeEstimateBLP}
&=
\left\{
\begin{pmatrix}
x_1 \\ x_2
\end{pmatrix}
:
\exists \lambda \text{ s.t. }
\begin{array}{l}
A_{2 1} x_1 + A_{2 2} x_2 \ge b_2, \\
A_{1 1} x_1 + A_{1 2} x_2 \ge b_1, \\
\lambda^{\top} ( A_{1 1} x_1 + A_{1 2} x_2 - b_1) = 0, \\
A_{1 1}^{\top} \lambda = c_{1 1}, \\
\lambda \ge 0
\end{array}
\right\}
\\
&=
\bigcup_{\omega \in \{0, 1\}^{\NConstraints_1}}
\left\{
\begin{pmatrix}
x_1 \\ x_2
\end{pmatrix}
:
\exists \lambda \text{ s.t. }
\begin{array}{l}
A_{2 1} x_1 + A_{2 2} x_2 \ge b_2, \\
A_{1 1} x_1 + A_{1 2} x_2 \ge b_1, \\
\omega^{\top} ( A_{1 1} x_1 + A_{1 2} x_2 - b_1) = 0, \\
(1 - \omega)^{\top} \lambda = 0, \\
A_{1 1}^{\top} \lambda = c_{1 1}, \\
\lambda \ge 0
\end{array}
\right\}.
\end{align*}
That is, $\FeasSolSet\eqref{eq:SolutionSizeEstimateBLP} = \cup_{\omega \in \Omega} U_{\omega}$, where
$$
\Omega = \{
\omega \in \{0, 1\}^{\NConstraints_1} : \exists \lambda \text{ s.t. } 
(1 - \omega)^{\top} \lambda = 0, 
A_{1 1}^{\top} \lambda = c_{1 1}, 
\lambda \ge 0
\}
$$
and for each $\omega \in \Omega$,
\begin{align*}
% \FeasSolSet\eqref{eq:SolutionSizeEstimateBLP}
% =
% \bigcup_{\omega \in \Omega}
U_{\omega} =
\left\{
\begin{pmatrix}
x_1 \\ x_2
\end{pmatrix}
:
\begin{array}{l}
A_{2 1} x_1 + A_{2 2} x_2 \ge b_2, \\
A_{1 1} x_1 + A_{1 2} x_2 \ge b_1, \\
\omega^{\top} ( A_{1 1} x_1 + A_{1 2} x_2 - b_1) = 0 \\
% (1 - \omega)^{\top} \lambda = 0, \\
% A_{1 1}^{\top} \lambda = c_{1 1}, \\
% \lambda \ge 0.
\end{array}
\right\}.
\end{align*}
Let 
\begin{align}
\min_{x_1, x_2} \{c_{2 1}^{\top} x_1 + c_{2 2}^{\top} x_2 : (x_1, x_2) \in U_{\omega} \}
\label{eq:DecomposedLP}
\tag{$\mathrm{P}_{\mathrm{decomp}}(\omega)$}
\end{align}
and observe that $\OptValFunc\eqref{eq:SolutionSizeEstimateBLP} = \min_{\omega \in \Omega} \OptValFunc\eqref{eq:DecomposedLP}$ and $\OptSolSet\eqref{eq:SolutionSizeEstimateBLP} = \cup_{\omega \in \Omega'} \OptSolSet(\mathrm{P}_{\mathrm{decomp}}(\omega))$, where $\Omega' = \Argmin_{\omega \in \Omega} \OptValFunc\eqref{eq:DecomposedLP}$.
Note that each $U_\omega$ can be defined by inequalities with coefficients of size at most $\sigma$.
Thus, by Lemma~\ref{lemma:estimate-of-primal-linear-inequality}, for each $\omega \in \Omega$, $\OptSolSet\eqref{eq:DecomposedLP}$ contains a rational point of size $\SolutionSizeEstimate(\NVariables, \sigma)$ given it is nonempty.
\MyQED
\end{proof}

\section{Proof of Lemma~\ref{lemma:PenaltyReformulation}}
\label{sec:ProofOfLemmaPenaltyReformulation}

\begin{lemma}
\label{lemma:ZeroViolation}
Under the assumptions of Lemma~\ref{lemma:PenaltyReformulation}, for any $(x_1, x_2, e) \in \OptSolSet\eqref{eq:PenaltyAfter}$, it holds that $e = 0$.
\end{lemma}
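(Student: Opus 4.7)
The plan is to argue by contradiction: suppose some $(\bar x_1, \bar x_2, \bar e) \in \OptSolSet\eqref{eq:PenaltyAfter}$ has $\bar e > 0$ and derive a contradiction. The clean sub-case is when the dropped upper-level constraint is satisfied at this solution, $\bar A_{21}\bar x_1+\bar A_{22}\bar x_2\ge\bar b_2$: then $(\bar x_1,0)$ meets every lower-level constraint of \eqref{eq:PenaltyAfter} at $\bar x_2$ and attains the lower-level optimum $c_{11}^\top\bar x_1=v_2(\bar x_2)$, so it lies in the lower-level argmin and, by the optimistic interpretation of the bilevel program, $(\bar x_1,\bar x_2,0)$ is itself feasible for \eqref{eq:PenaltyAfter} with upper-level objective strictly smaller by $M\bar e>0$, contradicting the optimality of $(\bar x_1,\bar x_2,\bar e)$. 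Hence I may assume the inequality $\bar A_{21}\bar x_1+\bar A_{22}\bar x_2\ge\bar b_2$ is violated at $(\bar x_1,\bar x_2)$, and the rest of the argument must rule out this remaining case.

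My main tool is a size-controlled competitor from \eqref{eq:PenaltyBefore}. By Lemma~\ref{lemma:SolutionSizeEstimate} I obtain an optimal $(x_1^*,x_2^*)\in\OptSolSet\eqref{eq:PenaltyBefore}$ with $\Size{x_1^*,x_2^*}\le\SolutionSizeEstimate(\NVariables,\sigma)$, and a direct check using $\bar A_{21}x_1^*+\bar A_{22}x_2^*\ge\bar b_2$ together with the identity $v_1(x_2^*)=v_2(x_2^*)=c_{11}^\top x_1^*$ (which holds for both $\bar{\ReformPen}\in\{1,\infty\}$) shows $(x_1^*,x_2^*,0)\in\FeasSolSet\eqref{eq:PenaltyAfter}$. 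Applying Lemma~\ref{lemma:SolutionSizeEstimate} a second time—now to \eqref{eq:PenaltyAfter} as a bilevel LP with $\NVariables+1$ variables—provides an optimal $(\hat x_1,\hat x_2,\hat e)\in\OptSolSet\eqref{eq:PenaltyAfter}$ of size at most $\SolutionSizeEstimate(\NVariables+1,\sigma)$. Comparing the objective of $(\hat x_1,\hat x_2,\hat e)$ with that of the competitor gives $M\hat e\le 2\NVariables c_\infty\, 2^{\SolutionSizeEstimate(\NVariables+1,\sigma)}$, and the hypothesis $M\ge 3\NVariables c_\infty\, 4^{\SolutionSizeEstimate(\NVariables+1,\sigma)}$ then forces $\hat e\le \tfrac{2}{3}\cdot 2^{-\SolutionSizeEstimate(\NVariables+1,\sigma)}$. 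This lies strictly below the smallest positive rational of encoding size at most $\SolutionSizeEstimate(\NVariables+1,\sigma)$, so $\hat e=0$, giving $\OptValFunc\eqref{eq:PenaltyAfter}=c_{21}^\top\hat x_1+c_{22}^\top\hat x_2$.

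The main obstacle, and the step I expect to require the most care, is upgrading this conclusion from ``some'' optimum with $e=0$ to ``every'' optimum, since the assumed $(\bar x_1,\bar x_2,\bar e)$ has no a priori size bound and could sit on a large optimal face. My plan here is to pass to the polyhedral decomposition $\FeasSolSet\eqref{eq:PenaltyAfter}=\bigcup_\omega P_\omega$ of~\cite{BasuEtAl2021}, localize $(\bar x_1,\bar x_2,\bar e)$ in some $P_\omega$ whose defining inequalities have data of polynomial size in $\NVariables$ and $\sigma$, and apply the argument of the previous paragraph to the LP $\min\{c_{21}^\top x_1+c_{22}^\top x_2+Me:(x_1,x_2,e)\in P_\omega\}$: every vertex of its optimal face is of polynomial size and hence has $e=0$. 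A recession-cone argument must then rule out the optimal face extending in a direction with $d_e>0$, since such a direction would force the ratio $(c_{21}^\top d_1+c_{22}^\top d_2)/d_e$ to equal $-M$, while this ratio is polynomially bounded over the extreme rays of $P_\omega$ and cannot reach $-M$ under our hypothesis on $M$. The case $\bar{\ReformPen}=1$ is immediate because the constraint $e\le 1$ forces $d_e=0$ in any recession direction; the case $\bar{\ReformPen}=\infty$ is exactly where the quantitative bound on the recession ratio has to be established carefully.
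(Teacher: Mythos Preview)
Your overall strategy—assume an optimum with $\bar e>0$, pass to the polyhedral decomposition of $\FeasSolSet\eqref{eq:PenaltyAfter}$, and exploit the size bounds of Lemmata~\ref{lemma:estimate-of-primal-linear-inequality} and~\ref{lemma:SolutionSizeEstimate}—coincides with the paper's.  Where you diverge is in how the contradiction is extracted, and here the paper has a much shorter route that renders most of your plan unnecessary.

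After localizing $(\bar x_1,\bar x_2,\bar e)$ in a cell $U_{\bar\omega}'$, the paper considers the LP
\[
\min\bigl\{c_{21}^\top x_1+c_{22}^\top x_2+Me\;:\;(x_1,x_2,e)\in U_{\bar\omega}',\ e>0\bigr\},
\]
with the \emph{strict} inequality $e>0$ appended.  Since $(\bar x_1,\bar x_2,\bar e)$ is optimal for this LP (its value agrees with $\OptValFunc\eqref{eq:PenaltyAfter}$), Lemma~\ref{lemma:estimate-of-primal-linear-inequality}—which is stated precisely so as to accommodate strict inequalities—produces a rational optimum $(\tilde x_1,\tilde x_2,\tilde e)$ of size at most $\SolutionSizeEstimate(\NVariables+1,\sigma)$ with $\tilde e>0$, hence $\tilde e\ge 2^{-\SolutionSizeEstimate(\NVariables+1,\sigma)}$.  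One then compares $\OptValFunc\eqref{eq:PenaltyAfter}=c_{21}^\top\tilde x_1+c_{22}^\top\tilde x_2+M\tilde e$ directly with $\OptValFunc\eqref{eq:PenaltyBefore}$ (using a small optimum of~\eqref{eq:PenaltyBefore}) and obtains $\OptValFunc\eqref{eq:PenaltyAfter}>\OptValFunc\eqref{eq:PenaltyBefore}$, contradicting the relaxation inequality.  No sub-case analysis, no ``some versus every optimum'' upgrade, no vertex or recession-cone argument is needed.

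Your third paragraph, by contrast, has genuine soft spots.  First, $P_\omega$ need not be pointed (Lemma~\ref{lemma:PenaltyReformulation} imposes no bounds on $x_1,x_2$), so the optimal face may have no vertices at all, and your sentence ``every vertex of its optimal face is of polynomial size and hence has $e=0$'' is then vacuous; you would have to mod out by the lineality space first and argue that this preserves both the $e$-coordinate and polynomial size control.  Second, a recession direction $d$ of the optimal face is not itself an extreme ray of $P_\omega$, so your claim that the ratio $(c_{21}^\top d_1+c_{22}^\top d_2)/d_e$ is ``polynomially bounded over the extreme rays of $P_\omega$'' is not immediately applicable to $d$; one must decompose $d$ into extreme rays (plus lineality, which has zero $e$-component), observe that each such ray appearing with positive coefficient has nonnegative objective (by boundedness of the LP) and hence zero objective (since they sum to zero), and only then isolate one with positive $e$-component.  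This can be pushed through, but the strict-inequality trick makes all of it—and your first two paragraphs—superfluous.
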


\begin{proof}
For any $x_1 \in \mathbb{R}^{\NVariables_1}$ and $x_2 \in \mathbb{R}^{\NVariables_2}$, $(x_1, x_2) \in \FeasSolSet\eqref{eq:PenaltyBefore}$ implies $(x_1, x_2, 0) \in \FeasSolSet\eqref{eq:PenaltyAfter}$.
Thus, we have 
\begin{equation}
\OptValFunc\eqref{eq:PenaltyBefore} \ge \OptValFunc\eqref{eq:PenaltyAfter}.
\label{eq:ProofPenaltyReformulationRelaxation}
\end{equation}
For the sake of contradiction, assume that there exists $(\bar{x}_1, \bar{x}_2, \bar{\ReformPen}) \in \OptSolSet\eqref{eq:PenaltyAfter}$ such that $\bar{\ReformPen} > 0$.
Under this assumption, we show that $\OptValFunc\eqref{eq:PenaltyBefore} < \OptValFunc\eqref{eq:PenaltyAfter}$, which contradicts \eqref{eq:ProofPenaltyReformulationRelaxation}.
Let $\NConstraints'$ be the number of constraints in the lower-level problem of~\eqref{eq:PenaltyAfter}.
Following the argument in the proof of Lemma~\ref{lemma:SolutionSizeEstimate}, we can write the feasible set of \eqref{eq:PenaltyAfter} as the union of polyhedra:
$$
\FeasSolSet\eqref{eq:PenaltyAfter} = \bigcup_{\omega \in \Omega'} U_{\omega}',
$$
where $\Omega' \subset \{0, 1\}^{\NConstraints'}$ and for each $\omega \in \Omega'$, $U_{\omega}' \subset \mathbb{R}^{\NVariables + 1}$ is a rational polyhedron defined by inequalities whose coefficients are of \EncodingSize{} at most $\sigma$.
Furthermore, there is $\bar{\omega} \in \Omega'$ such that $(\bar{x}_1, \bar{x}_2, \bar{\ReformPen}) \in U_{\bar{\omega}}'$.
Now, consider the LP
\begin{equation}
\min_{x_1, x_2, \ReformPen} \{ c_{2 1}^{\top} x_1 + c_{2 2}^{\top} x_2 + M \ReformPen : (x_1, x_2, \ReformPen) \in U_{\bar{\omega}}', \ReformPen > 0 \}.
\label{eq:PenaltyAfterStrictlyPositiveLP}
\end{equation}
Since $(\bar{x}_1, \bar{x}_2, \bar{\ReformPen})$ is also an optimal solution to $\eqref{eq:PenaltyAfterStrictlyPositiveLP}$, we have $\emptyset \not= \OptSolSet\eqref{eq:PenaltyAfterStrictlyPositiveLP} \subset U_{\bar{\omega}}' \subset \OptSolSet\eqref{eq:PenaltyAfter}$.
By Lemma~\ref{lemma:estimate-of-primal-linear-inequality}, there exists a rational solution $(\tilde{x}_1, \tilde{x}_2, \tilde{\ReformPen}) \in \OptSolSet\eqref{eq:PenaltyAfterStrictlyPositiveLP}$ such that $\SizeThree{\tilde{x}_1}{\tilde{x}_2}{\tilde{\ReformPen}} \le \SolutionSizeEstimate(\NVariables + 1, \sigma)$.
In particular, we have $\tilde{\ReformPen} > 0$, implying $\tilde{\ReformPen} \ge 1 / 2^{\SolutionSizeEstimate(\NVariables + 1, \sigma)}$.
Let $(\hat{x}_1, \hat{x}_2) \in \mathbb{Q}^{\NVariables}$ be a rational optimal solution to \eqref{eq:PenaltyBefore} such that $\SizeTwo{\hat{x}_1}{\hat{x}_2} \le \SolutionSizeEstimate(\NVariables + 1, \sigma)$.
Then,
\begin{align}
&
\OptValFunc\eqref{eq:PenaltyAfter}
-
\OptValFunc\eqref{eq:PenaltyBefore}
\notag
% \label{eq:LemmaPenatyReformulationContradiction}
\\
&
\qquad
=
(c_{2 1}^{\top} \tilde{x}_1 + c_{2 2}^{\top} \tilde{x}_2 + M \tilde{\ReformPen})
-(c_{2 1}^{\top} \hat{x}_1 + c_{2 2}^{\top} \hat{x}_2)
\notag
\\
% -------------------------------------------------
&
\qquad
\ge
M \tilde{\ReformPen} 
- \|c_{2 1} \|_\infty \| \tilde{x}_1 \|_1
- \|c_{2 2} \|_\infty \| \tilde{x}_2 \|_1
- \|c_{2 1} \|_\infty \| \hat{x}_1 \|_1
- \|c_{2 2} \|_\infty \| \hat{x}_2 \|_1
\notag
\\
% -------------------------------------------------
&
\qquad
\ge
M \tilde{\ReformPen} 
- c_\infty \NVariables_1 \| \tilde{x}_1 \|_\infty
- c_\infty \NVariables_2 \| \tilde{x}_2 \|_\infty
- c_\infty \NVariables_1 \| \hat{x}_1 \|_\infty
- c_\infty \NVariables_2 \| \hat{x}_2 \|_\infty
\notag
\\
% -------------------------------------------------
&
\qquad
\ge
M/2^{\SolutionSizeEstimate(\NVariables + 1, \sigma)}
- c_{\infty} \NVariables_1 2^{\SolutionSizeEstimate(\NVariables + 1, \sigma)}
- c_{\infty} \NVariables_2 2^{\SolutionSizeEstimate(\NVariables + 1, \sigma)}
\notag
\\
& 
\qquad
\qquad
\qquad
\qquad
\qquad
\qquad
-  c_{\infty} \NVariables_1 2^{\SolutionSizeEstimate(\NVariables + 1, \sigma)}
- c_{\infty} \NVariables_2 2^{\SolutionSizeEstimate(\NVariables + 1, \sigma)}
\notag
\\
% -------------------------------------------------
&
\qquad
\ge
M/2^{\SolutionSizeEstimate(\NVariables + 1, \sigma)}
- 2 c_{\infty} \NVariables 2^{\SolutionSizeEstimate(\NVariables + 1, \sigma)}
\notag
\\
% -------------------------------------------------
&
\qquad
\ge
M/2^{\SolutionSizeEstimate(\NVariables + 1, \sigma)}
- (2/3) M/2^{\SolutionSizeEstimate(\NVariables + 1, \sigma)}
\notag
\\
% -------------------------------------------------
&
\qquad
> 0,
\notag
\end{align}
which contradicts~\eqref{eq:ProofPenaltyReformulationRelaxation}.
Therefore, for any $(\bar{x}_1, \bar{x}_2, \bar{\ReformPen}) \in \OptSolSet\eqref{eq:PenaltyAfter}$ we have $\bar{\ReformPen} = 0$.
\MyQED
\end{proof}

\begin{proof}[Lemma~\ref{lemma:PenaltyReformulation}]
We prove relation~(ii), from which relation~(i) follows directly.
We establish the following two statements: 
\begin{enumerate}[label=(A-\arabic*)]
\item
\label{item:lemma:PenaltyReformulation:StepOne}
If $(x_1, x_2, \ReformPen) \in \OptSolSet\eqref{eq:PenaltyAfter}$, then $\ReformPen = 0$ and $(x_1, x_2) \in \OptSolSet\eqref{eq:PenaltyBefore}$;
\item 
\label{item:lemma:PenaltyReformulation:StepTwo}
If $(x_1, x_2) \in \OptSolSet\eqref{eq:PenaltyBefore}$, then $(x_1, x_2, 0) \in \OptSolSet\eqref{eq:PenaltyAfter}$.
\end{enumerate}
The desired relation follows from these two assertions.

We begin with the proof of assertion~\ref{item:lemma:PenaltyReformulation:StepOne}.
Let $(x_1, x_2, \ReformPen) \in \OptSolSet\eqref{eq:PenaltyAfter}$.
By Lemma~\ref{lemma:ZeroViolation}, $\ReformPen = 0$.
It remains to show that $(x_1, x_2)$ is optimal to \eqref{eq:PenaltyBefore}. 
We show the feasibility and optimality of $(x_1, x_2)$ in order.
Since $(x_1, x_2, 0) \in \OptSolSet\eqref{eq:PenaltyAfter}$, it follows that $(x_1, x_2)$ satisfies all the linear inequalities in~\eqref{eq:PenaltyBefore}.
Thus, $(x_1, x_2)$ is feasible for \eqref{eq:PenaltyBefore} if and only if $x_1$ is optimal to the lower-level problem of~\eqref{eq:PenaltyBefore} given $x_2$.
To derive a contradiction, suppose $x_1$ is not optimal to the lower-level problem of~\eqref{eq:PenaltyBefore}.
Then, there exists $\bar{x}_1$ feasible to the lower-level problem of~\eqref{eq:PenaltyBefore} such that $c_{1 1}^{\top} \bar{x}_1 < c_{1 1}^{\top} x_1$.
Let $V \ge 1$ be a real number such that
$$
\bar{A}_{2 1} \bar{x}_1 + \bar{A}_{2 2} x_2 + V \ge \bar{b}_2.
$$
Define
$$
\hat{x}_1 = \frac{V - 1}{V} x_1 + \frac{1}{V} \bar{x}_1, \hat{e} = 1.
$$
Then, $(\hat{x}_1, \hat{e})$ is feasible to the lower-level problem of~\eqref{eq:PenaltyAfter}, and its objective value is strictly smaller than that of $(x_1, 0)$, a contradiction to the feasibility of $(x_1, x_2, 0)$ to \eqref{eq:PenaltyAfter}.
Thus, $x_1$ is indeed optimal to the lower-level problem of~\eqref{eq:PenaltyBefore}, and $(x_1, x_2)$ is a feasible solution to~\eqref{eq:PenaltyBefore}.
The objective value of $(x_1, x_2)$ in instance~\eqref{eq:PenaltyBefore} is $c_{21}^{\top} x_1 + c_{22}^{\top} x_2$.
By assumption $(x_1, x_2, 0) \in \OptSolSet\eqref{eq:PenaltyAfter}$, we have $\OptValFunc\eqref{eq:PenaltyAfter} = c_{21}^{\top} x_1 + c_{22}^{\top} x_2$.
In light of \eqref{eq:ProofPenaltyReformulationRelaxation}, $(x_1, x_2)$ is optimal for \eqref{eq:PenaltyBefore} and $\OptValFunc\eqref{eq:PenaltyBefore} = \OptValFunc\eqref{eq:PenaltyAfter}$.

We now prove assertion~\ref{item:lemma:PenaltyReformulation:StepTwo}.
Let $(x_1, x_2) \in \OptSolSet\eqref{eq:PenaltyBefore}$.
We claim that $(x_1, 0)$ is optimal to the lower-level problem of~\eqref{eq:PenaltyAfter}.
Suppose the contrary.
Then thre exists a feasible solution $(\hat{x}_1, \hat{e})$ to the lower-level problem of~\eqref{eq:PenaltyAfter} such that $c_{1 1}^{\top} \hat{x}_1 < c_{1 1}^{\top} x_1$.
Note that $\hat{x}_1$ is feasible to the lower-level problem of~\eqref{eq:PenaltyBefore} with a strictly smaller objective value than $x_1$, a contradiction to the assumption that $(x_1, x_2) \in \FeasSolSet\eqref{eq:PenaltyBefore}$.
Therefore, $(x_1, 0)$ is optimal to the lower-level problem of~\eqref{eq:PenaltyAfter}, and we have $(x_1, x_2, 0) \in \FeasSolSet\eqref{eq:PenaltyAfter}$.
Since $\OptValFunc\eqref{eq:PenaltyBefore} = \OptValFunc\eqref{eq:PenaltyAfter}$, $(x_1, x_2, 0)$ is optimal for~\eqref{eq:PenaltyAfter}.
\MyQED
\end{proof}

\section{Proof of Lemma~\ref{lemma:SATBLPFirst}}
\label{sec:proof_of_lemma_sat_multivariate_blp_formulation}

Consider the following instance:
\begin{align}
\min_{x, y, \ReformFrac} \ & 2\sum_{i = 1}^{n} \left(y - 2 \ReformFrac_i \right) - \frac{2}{n} \sum_{i = 1}^n \ReformFrac_i
\tag{P${}_{\text{V}}^{\text{}}(\SATBooleanFormula)$}
\label{eq:SATBLPFirstVicent}
\\
\text{s.t.} \ &
A_{\SATBooleanFormula} x \ge \left(\frac{3}{2} - \frac{1}{2} y\right) \OneVector - a_{\SATBooleanFormula}, \label{eq:SATBLPFirstVicentConstraintOne} \\
&
\frac{1}{2} - \frac{1}{2} y \le x_{i} \le \frac{1}{2} + \frac{1}{2} y, \quad \forall i = 1, \ldots, \NVariables,
\label{eq:SATBLPFirstVicentConstraintTwo} \\
% & 0 \le x \le 1, \ 0 \le y \le 1, \notag \\
&
\ReformFrac \in \Argmin_{\ReformFrac'} 
\left\{ 
\sum_{i = 1}^n \ReformFrac_i'
:
% \begin{array}{l}
-\ReformFrac_i' \le x_i - 1/2 \le \ReformFrac_i', \ i = 1, \ldots, n 
% \end{array}
\right\}. \notag 
\end{align}
Rodrigues \EtAl{}~\cite{RodriguesEtAl2024} used this instance to study the unboundedness of the bilevel LP.
This instance is based on the one studied by Vicent \EtAl{}~\cite{vicente1994descent}\footnote{In their lower-level problem, there is another variable $z$, but it can be eliminated using the relation $z_i = x_0 - w_i$ for all $i$, which follows from the optimality of the lower-level problem. We also replaced $x_0$ in their formulation with $y / 2$.}.
% and the next result follows immediately from their argument.

\begin{lemma}
\label{lemma:VicentInstance}
\mbox{}
% \begin{enumerate}
% \item
The optimal objective value of~\eqref{eq:SATBLPFirstVicent} is $-\infty$ if $F$ is satisfiable, and $0$ otherwise.
% \item 
% If $(\bar{x}, \bar{y}, \bar{\ReformFrac})$ is feasible for \eqref{eq:SATNonconvex}, $(\hat{x}, \hat{y}, \hat{\ReformFrac})$ is feasible for \eqref{eq:SATNonconvex}, where $\hat{x}_i = 2 \bar{x}_i - 1/2$ for $i = 1, \ldots, \NVariables$, $\hat{y} = 2\bar{y}$ and $\hat{w} = 2\bar{w}$.
% \end{enumerate}
\end{lemma}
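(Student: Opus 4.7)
The plan is to exploit lower-level optimality which forces $\ReformFrac_i = |x_i - 1/2|$, collapsing~\eqref{eq:SATBLPFirstVicent} into a single-level nonconvex problem with objective $2ny - (4 + 2/n)\sum_i |x_i - 1/2|$ subject to $|x_i - 1/2| \le y/2$ and $A_{\SATBooleanFormula} x \ge (3/2 - y/2)\OneVector - a_{\SATBooleanFormula}$. For $y > 0$, I would substitute $\mu_i = 2(x_i - 1/2)/y \in [-1,1]$; the identity $(A_{\SATBooleanFormula} \OneVector)_j / 2 = 3/2 - a_{\SATBooleanFormula}[j]$ (which holds because every 3-clause has $\sum_{k=1}^{3} \sigma_k = 3 - 2 a_{\SATBooleanFormula}[j]$) reduces the matrix constraint to $s_j(\mu) := \sum_{k=1}^{3} \sigma_k \mu_{i(k)} \ge -1$---the standard LP relaxation of SAT---and the objective to $y \bigl[ 2n - (2 + 1/n)\sum_i |\mu_i| \bigr]$. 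For the satisfiable case, taking $\mu \in \{-1,+1\}^{n}$ to realize a satisfying assignment makes every $s_j(\mu) \ge -1$ (since on $\{-1,+1\}^{n}$ the integer $s_j \in \{-3,-1,1,3\}$ and $s_j = -3$ means the clause is falsified); then $x_i = 1/2 + (y/2)\mu_i$ with $y \to \infty$ yields feasible solutions whose objective is $-y \to -\infty$, so $\OptValFunc\eqref{eq:SATBLPFirstVicent} = -\infty$.

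For the unsatisfiable case, the point $x = (1/2)\OneVector$, $y = 0$, $\ReformFrac = \ZeroVector$ is feasible with objective $0$, since $y = 0$ forces $x = (1/2)\OneVector$ and each clause constraint holds with equality. The main step is ruling out negative objective values via a rounding argument. Given any feasible $\mu \in [-1,1]^{n}$, define $\bar{\mu}_i = \operatorname{sgn}(\mu_i)$, with arbitrary tie-breaking when $\mu_i = 0$, so that $|\bar{\mu}_i - \mu_i| = 1 - |\mu_i|$. Each clause perturbation satisfies
\[
|s_j(\bar{\mu}) - s_j(\mu)| \le \sum_{k=1}^{3} (1 - |\mu_{i(k)}|) \le 3 \sum_{i=1}^{n} (1 - |\mu_i|).
\]
Assuming toward contradiction that $\sum_i |\mu_i| > 2n^{2}/(2n+1)$, equivalently $\sum_i (1 - |\mu_i|) < n/(2n+1)$, the perturbation is strictly less than $3n/(2n+1) < 3/2$, so $s_j(\bar{\mu}) > s_j(\mu) - 3/2 \ge -5/2$. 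Since $s_j(\bar{\mu})$ is an odd integer in $\{-3,-1,1,3\}$, this forces $s_j(\bar{\mu}) \ge -1$ for every clause, meaning $\bar{\mu}$ is a satisfying assignment---contradicting unsatisfiability. Hence $\sum_i |\mu_i| \le 2n^{2}/(2n+1)$, and the identity $(2 + 1/n) \cdot 2n^{2}/(2n+1) = 2n$ yields objective $\ge 0$, which together with the feasible point above shows $\OptValFunc\eqref{eq:SATBLPFirstVicent} = 0$.

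The main obstacle I anticipate is verifying that the coefficient $(4 + 2/n)$ in the original objective is exactly balanced against the rounding threshold $n/(2n+1)$, so that the bound produced by the argument fits without slack. A secondary subtlety is handling clauses that place repeated literals on a single variable, which are responsible for the worst-case factor of $3$ in the perturbation bound; the inequality $3n/(2n+1) < 3/2$ (valid for every $n \ge 1$) is precisely what keeps the rounding argument going through across all 3CNF instances.
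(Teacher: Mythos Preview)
Your argument is correct. The reduction of~\eqref{eq:SATBLPFirstVicent} to the single-level problem via $\ReformFrac_i=|x_i-1/2|$, the substitution $\mu_i=2(x_i-1/2)/y$, and the identity $(A_{\SATBooleanFormula}\OneVector)_j/2=3/2-a_{\SATBooleanFormula}[j]$ (which indeed requires exactly three literals per clause, the standard 3CNF convention) all check out, and the resulting clause condition $s_j(\mu)\ge-1$ is exactly the relaxed SAT system. In the satisfiable case the ray you describe coincides with the one the paper sketches after the lemma. In the unsatisfiable case your rounding argument is tight: the chain $\sum_i(1-|\mu_i|)<n/(2n+1)\Rightarrow |s_j(\bar\mu)-s_j(\mu)|<3n/(2n+1)<3/2$ together with $s_j(\bar\mu)\in\{-3,-1,1,3\}$ forces $s_j(\bar\mu)\ge-1$, and the algebraic identity $(2+1/n)\cdot 2n^2/(2n+1)=2n$ closes the loop exactly, as you anticipated.

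The paper itself does not prove this lemma; it defers to~\cite{RodriguesEtAl2024} and only records the unbounded ray for the satisfiable case and the feasible point $x=(1/2)\OneVector$, $y=0$ for the unsatisfiable case. Your contribution is a fully self-contained and elementary proof of the nontrivial direction (the lower bound $\OptValFunc\ge 0$ when $\SATBooleanFormula$ is unsatisfiable), via a direct LP-rounding argument rather than an appeal to the cited reference. One small point worth making explicit when you write it up: the crude step $\sum_{k=1}^{3}(1-|\mu_{i(k)}|)\le 3\sum_{i=1}^{n}(1-|\mu_i|)$ is valid simply because each summand on the left is one of the nonnegative terms on the right; this also transparently handles clauses with repeated variables, which you flagged as a concern.
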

See~\cite{RodriguesEtAl2024} for the proof.
If $\SATBooleanFormula$ is unsatisfiable, then the point defined by $x_i = \frac{1}{2}$ for all $i = 1, \ldots, \NVariables$, $y = 0$, and $\ReformFrac = \ZeroVector$ attains the optimal objective value of 0.  
On the other hand, if $\SATBooleanFormula$ is satisfiable, let $\BinaryVector \in \{ 0, 1 \}^{\NVariables}$ be a binary vector satisfying $\SATBooleanFormula$.
Then for any $t \ge 0$, the point $(x(t), y(t), \ReformFrac(t))$ defined by
$$
y(t) = t, \quad x_i(t) = \frac{1}{2} + \left(\BinaryVector_i - \frac{1}{2}\right)t, \quad \text{and} \quad \ReformFrac_i(t) = \frac{t}{2}, \quad \text{for } i = 1, \ldots, \NVariables,
$$
is feasible for~\eqref{eq:SATBLPFirstVicent}, and the objective value improves indefinitely as $t$ grows.

The next instance is also useful to prove Lemma~\ref{lemma:SATBLPFirst}:
\begin{align}
\min_{x, y} \ & 2 \sum_{i = 1}^{n} \left(y - 2 \left|x_i - \frac{1}{2}\right| \right) - \frac{2}{n} \sum_{i = 1}^n \left|x_i - \frac{1}{2}\right|
\tag{P${}_{\text{nonconvex}}^{\text{}}$}
\label{eq:SATNonconvex}
\\
\text{s.t.} \ &
A_{\SATBooleanFormula} x \ge \left(\frac{3}{2} - \frac{1}{2} y\right) \OneVector - a_{\SATBooleanFormula}, \notag \\
&
\frac{1}{2} - \frac{1}{2} y \le x_i \le \frac{1}{2} + \frac{1}{2} y, 
&&
\forall i = 1, \ldots, n, \label{eq:SATNonconvexConstraintTwo} \\
& 
0 \le x_i \le 1, 
&&
\forall i = 1, \ldots, n,
\notag \\
& 
0 \le y \le 1.
\notag
\end{align}
For any $x \in \mathbb{R}^{\NVariables}$ and $y \in \mathbb{R}$, $(x, y)$ is feasible for \eqref{eq:SATNonconvex} if and only if $(x, y, \ReformFrac)$ is feasible for \eqref{eq:SATBLPFirst}, where $\ReformFrac_i = |x_i - 1/2|$ for $i = 1, \ldots, \NVariables$.
Thus, $\OptValFunc\eqref{eq:SATNonconvex} = \OptValFunc\eqref{eq:SATBLPFirst}$.

\begin{proof}[Lemma~\ref{lemma:SATBLPFirst}]
Suppose first $\SATBooleanFormula$ is satisfiable.
We show that $\OptValFunc\eqref{eq:SATNonconvex} = -1$.
Constraint~\eqref{eq:SATNonconvexConstraintTwo} implies $y - 2 |x_i - 1/2| \ge 0$ and $|x_i - 1/2| \le (1/2) y \le 1/2$ for $i = 1, \ldots, \NVariables$.
Thus, for any $(x, y) \in \FeasSolSet\eqref{eq:SATNonconvex}$,
$$
2 \sum_{i = 1}^{n} \left(y - 2 \left|x_i - \frac{1}{2}\right| \right) - \frac{2}{n} \sum_{i = 1}^n \left|x_i - \frac{1}{2}\right|
\ge 2 \sum_{i = 1}^{\NVariables} 0 - \frac{2}{n} \sum_{i = 1}^{\NVariables} \frac{1}{2} = -1.
$$
Thus, $\OptValFunc\eqref{eq:SATNonconvex} \ge -1$, but this is attained by $x = \BinaryVector$ and $y = 1$, where $\BinaryVector$ is a binary vector satisfying $\SATBooleanFormula$.

Next, suppose $\SATBooleanFormula$ is unsatisfiable.
Observe that $\FeasSolSet\eqref{eq:SATBLPFirstVicent} \supset \FeasSolSet\eqref{eq:SATBLPFirst}$ (recall that $\FeasSolSet$ denotes the bilevel feasible set) and therefore \eqref{eq:SATBLPFirstVicent} is a relaxation of \eqref{eq:SATBLPFirst}.
As a result, $\OptValFunc\eqref{eq:SATBLPFirst} \ge \OptValFunc\eqref{eq:SATBLPFirstVicent} = 0$, where the last equality holds from Lemma~\ref{lemma:VicentInstance}.
Note that $x_i = 1/2$ for $i = 1, \ldots, \NVariables$, $y = 0$ and $w = \ZeroVector$ is feasible for \eqref{eq:SATBLPFirst} and attains the objective value of $0$, hence optimal for \eqref{eq:SATBLPFirst}.
\end{proof}

\section{Proof of Lemma~\ref{lemma:LexLowerLevelSolution}}
\label{sec:ProofOfLemmaLexLowerLevelSolution}

\begin{proof}
\begin{enumerate}
\item
For any $\LexUVar \in [0, 1]$, the value of the first objective $\LexLVarS_{\NVariables + 1} + \LexLVarT_{\NVariables + 1}$ is bounded by $[(3/2) \LexUVar - 1/2]^+ + [(3/2) \LexUVar - 1]^+$ from below.
Given $(\ReformZ, \ReformAugFrac, \LexLVarS, \LexLVarT, \LexLVarU) \in \FeasSolSet\eqref{eq:SATBilevelLexProgrammingLowerLevel}$, this lower bound is attained if and only if $\LexLVarU_{\NVariables + 1} = \LexUVar$, $\LexLVarS_{\NVariables + 1} = [(3/2) \LexLVarU_{\NVariables + 1} - 1/2]^+$ and $\LexLVarT_{\NVariables + 1} = [(3/2) \LexLVarU_{\NVariables + 1} - 1]^+$, and there is at least one point in $\FeasSolSet\eqref{eq:SATBilevelLexProgrammingLowerLevel}$ satisfying these conditions.
Thus, the optimality with respect to the first objective $\LexLVarS_{\NVariables + 1} + \LexLVarT_{\NVariables + 1}$ implies $\LexLVarU_{\NVariables + 1} = \LexUVar$, $\LexLVarS_{\NVariables + 1} = [(3/2) \LexLVarU_{\NVariables + 1} - 1/2]^+$, $\LexLVarT_{\NVariables + 1} = [(3/2) \LexLVarU_{\NVariables + 1} - 1]^+$, and it follows that $\ReformZ_{\NVariables + 1} = \LexLVarFuncZ(\LexLVarU_{\NVariables + 1})$ and $\LexLVarU_{\NVariables} = \LexLVarFuncU(\LexLVarU_{\NVariables + 1})$.
Therefore,
\begin{alignat*}{3}
\mathmakebox[1.5cm][l]{
\OptSolSet\eqref{eq:SATBilevelLexProgrammingLowerLevel} =
}
&
\\
\lexmin_{\ReformZ, \ReformAugFrac, \LexLVarS, \LexLVarT, \LexLVarU} \ & 
\mathmakebox[0cm][l]{
\left\{
\LexLVarS_{\NVariables} + \LexLVarT_{\NVariables}, \
\ldots, \
\LexLVarS_{1} + \LexLVarT_{1}, \
\sum_{i = 1}^{\NVariables + 1} \ReformAugFrac_i
\right\}
}
\\
% ------------------------------------------------------------
\text{s.t.} \
&
\LexLVarU_{\NVariables + 1} = \theta,
\notag
\\
% ----------------------------
&
\hspace{-0.2em}
\mathmakebox[0cm][l]{
\begin{array}{ll}
\LexLVarS_{\NVariables + 1} = [(3/2) \LexLVarU_{\NVariables + 1} - 1/2]^+,
\ \
&
\LexLVarT_{\NVariables + 1} = [(3/2) \LexLVarU_{\NVariables + 1} - 1]^+,
\\[0.5em]
z_{\NVariables + 1} = 
\LexLVarFuncZ(\LexLVarU_{\NVariables + 1})
,
&
\LexLVarU_{\NVariables} = \LexLVarFuncU(\LexLVarU_{\NVariables + 1}),
\end{array}
}
\notag 
\\
% --------------------------------------------
&
\LexLVarU_{i - 1} = 3 \LexLVarU_i - 2 z_i,
&&
&&
\forall i = 2, \ldots, \NVariables, \notag \\
% ----------------------------
&
\LexLVarS_i \ge (3/2) \LexLVarU_{i} - 1/2,
\qquad
&&
\LexLVarT_i \ge (3/2) \LexLVarU_{i} - 1,
\quad
&&
\forall i = 1, \ldots, \NVariables,
\notag \\
&
z_i = 2 (\LexLVarS_i - \LexLVarT_i),
&&
&&
\forall i = 1, \ldots, \NVariables,
\notag \\
% --------------------------------------------
&
-\ReformAugFrac_i \le z_i - 1/2 \le \ReformAugFrac_i, 
\quad
&&
&&
\forall i = 1, \ldots, \NVariables + 1, \notag \\
&
% \mathmakebox[0cm][l]{
0 \le \ReformZ_i, \ReformAugFrac_i, \LexLVarS_i, \LexLVarT_i, \LexLVarU_i \le 1,
% }
&&
&&
\forall i = 1, \ldots, \NVariables + 1,
\notag
\end{alignat*}
From the optimality with respect to the next objective $s_{\NVariables} + t_{\NVariables}$, we get
$\LexLVarS_{\NVariables} = [(3/2) \LexLVarU_{\NVariables} - 1/2]^+$, $\LexLVarT_{\NVariables} = [(3/2) \LexLVarU_{\NVariables} - 1]^+$ $\ReformZ_{\NVariables} = \LexLVarFuncZ(\LexLVarU_{\NVariables})$ and $\LexLVarU_{\NVariables - 1} = \LexLVarFuncU(\LexLVarU_{\NVariables})$.
Repeating this, we obtain 
$\LexLVarS_i = [(3/2) \LexLVarU_i - 1/2]^+$, $\LexLVarT_i = [(3/2) \LexLVarU_i - 1]^+$ $\ReformZ_i = \LexLVarFuncZ(\LexLVarU_i)$ for $i = 1, \ldots, \NVariables - 1$, and $\LexLVarU_{i - 1} = \LexLVarFuncU(\LexLVarU_i)$ for $i = 2, \ldots, \NVariables - 1$.
Therefore,
\begin{alignat*}{3}
\mathmakebox[2.2cm][l]{
\OptSolSet\eqref{eq:SATBilevelLexProgrammingLowerLevel} =
}
&
\\
\Argmin_{\ReformZ, \ReformAugFrac, \LexLVarS, \LexLVarT, \LexLVarU} \ & 
\mathmakebox[0cm][l]{
\sum_{i = 1}^{\NVariables + 1} \ReformAugFrac_i
}
\notag
\\
% ------------------------------------------------------------
\text{s.t.} \
&
\LexLVarU_{\NVariables + 1} = \theta,
\notag
\\
&
\LexLVarS_i = [(3/2) \LexLVarU_i - 1/2]^+, 
\quad
&&
\LexLVarT_i = [(3/2) \LexLVarU_i - 1]^+,
\quad
&&
\forall i = 1, \ldots, \NVariables + 1,
\\
&
\ReformZ_i = \LexLVarFuncZ(\LexLVarU_i),
&&
-\ReformAugFrac_i \le \ReformZ_i - 1/2 \le \ReformAugFrac_i, 
\ \ 
&&
\forall i = 1, \ldots, \NVariables + 1,
\\
% --------------------------------------------
&
\LexLVarU_{i - 1} = \LexLVarFuncU(\LexLVarU_i),
&&
\quad
&&
\forall i = 2, \ldots, \NVariables + 1, \notag 
\\
&
% \mathmakebox[0cm][l]{
0 \le \ReformZ_i, \ReformAugFrac_i, \LexLVarS_i, \LexLVarT_i, \LexLVarU_i \le 1,
% }
&&
&&
\forall i = 1, \ldots, \NVariables + 1,
\notag
\end{alignat*}
implying the desired results.
\item
From assertion~\ref{lemma:LexLowerLevelSolutionAssertionOne}, when $\LexUVar = 1/6$, we have $\ReformZ_{\NVariables + 1} = 0$ and $\LexLVarU_{\NVariables} = 1/2$.
Thus, for any $i = 1, \ldots, n$, we have $\LexLVarU_i = \LexLVarFuncU(\LexLVarU_{i + 1}) = \LexLVarFuncU(1 / 2) = 1/2$.
We thus obtain $\ReformZ_i = \LexLVarFuncZ(\LexLVarU_{i}) = \LexLVarFuncZ(1 / 2) = 1/2$ for any $i = 1, \ldots, \NVariables$.
\item 
From assertion~\ref{lemma:LexLowerLevelSolutionAssertionOne}, since $\LexUVar \ge 2/3$, we have
$$
\ReformZ_{\NVariables + 1} = \LexLVarFuncZ(\LexUVar) = 1,
\quad
\LexLVarU_{\NVariables} = \LexLVarFuncU(\LexUVar) = 
3 \LexUVar - 2
=
\frac{2}{3}\left(\BinaryVector_{\NVariables} + \sum_{i = 1}^{\NVariables - 1} \frac{\BinaryVector_i}{3^{\NVariables - i}}\right).
$$
Since 
$
(2/3) \sum_{i = 1}^{\NVariables - 1} 
\BinaryVector_i/3^{\NVariables - i}
< 1/3,
$
we have $\LexLVarU_{\NVariables} \in [0, 1/3)$ if $\BinaryVector_{\NVariables} = 0$, and $\LexLVarU_{\NVariables} \in [2/3, 1]$ if $\BinaryVector_{\NVariables} = 1$.
Thus, 
\begin{align*}
\LexLVarU_{\NVariables - 1} 
&= 
\begin{cases}
\displaystyle
3 \cdot \frac{2}{3}\left(0 + \sum_{i = 1}^{\NVariables - 1} \frac{\BinaryVector_i}{3^{\NVariables - i}}\right), 
&
\text{if } \BinaryVector_{\NVariables} = 0,
\\[1em]
\displaystyle
3 \cdot \frac{2}{3}\left(1 + \sum_{i = 1}^{\NVariables - 1} \frac{\BinaryVector_i}{3^{\NVariables - i}}\right) - 2,
&
\text{if } \BinaryVector_{\NVariables} = 1,
\end{cases}
\\
&=
\frac{2}{3}\left(\BinaryVector_{\NVariables - 1} + \sum_{i = 1}^{\NVariables - 2} \frac{\BinaryVector_i}{3^{\NVariables - 1 - i}}\right), 
\end{align*}
Repeating the argument, we obtain
\begin{align*}
\LexLVarU_{\NVariables - 2} = \frac{2}{3}\left(\BinaryVector_{\NVariables - 2} + \sum_{i = 1}^{\NVariables - 3} \frac{\BinaryVector_i}{3^{\NVariables - 2 - i}}\right), 
\
\hdots,
\
\LexLVarU_{1} = \frac{2}{3} \BinaryVector_1.
\end{align*}
Thus, for each $i = 1, \ldots, \NVariables + 1$, $\LexLVarU_i \in [0, 1/3)$ if $\mu_i = 0$, and $\LexLVarU_i \in [2/3, 1]$ otherwise, implying $\ReformZ_i = \LexLVarFuncZ(\LexLVarU_i) = \BinaryVector_i$.
\MyQED
\end{enumerate}
\end{proof}

\section{Proof of Lemma~\ref{lemma:LexToWeighted}}
\label{sec:proof_of_lemma_lex_to_weighted}

Consider the following instance
\begin{align}
\min_{\LexUVar, \ReformZ, \ReformAugFrac, \LexLVarS, \LexLVarT, \LexLVarU} \ & 
2 \sum_{i = 1}^{n} \left(\ReformZ_{\NVariables + 1} - 2 \ReformAugFrac_i \right) - \frac{2}{n} \sum_{i = 1}^n \ReformAugFrac_i
\tag{$\bar{\mathrm{P}}{}_{\text{weight}}^{\text{}}(\SATBooleanFormula, \eta)$}
\label{eq:SATBilevelWeightedProblemWithEta}
\\
\text{s.t.} \ &
B_{\SATBooleanFormula} \ReformZ \ge b_{\SATBooleanFormula}, 
0 \le \LexUVar \le 1, \notag \\
&
(\ReformZ, \ReformAugFrac, \LexLVarS, \LexLVarT, \LexLVarU) 
\in
\OptSolSet\eqref{eq:SATBilevelWeightedProblemLowerLevelWithEta},
\notag 
\end{align}
where
\begin{align}
\min_{\ReformZ, \ReformAugFrac, \LexLVarS, \LexLVarT, \LexLVarU} \ & 
\sum_{i = 1}^{\NVariables + 1}
\LexWeightValue^i (\LexLVarS_i + \LexLVarT_i)
+ 
\eta \sum_{i = 1}^{\NVariables + 1} \frac{1}{4^{\NVariables - i + 1}} \ReformAugFrac_i
\tag{$\bar{\mathrm{P}}_{\text{weight}}^{\text{low}}(\NVariables + 1, \LexUVar, \eta)$}
\label{eq:SATBilevelWeightedProblemLowerLevelWithEta}
\\
\text{s.t.} \
&
(\ReformZ, \ReformAugFrac, \LexLVarS, \LexLVarT, \LexLVarU)
\in
\FeasSolSet\eqref{eq:SATBilevelLexProgrammingLowerLevel}.
\notag 
\end{align}
Note that $\eqref{eq:SATBilevelWeightedProblem}$ corresponds to \eqref{eq:SATBilevelWeightedProblemWithEta} with $\eta = 1$.
We show the following claim, from which Lemma~\ref{lemma:LexToWeighted} follows.

\begin{lemma}
\label{lemma:LexToWeightedWithEta}
\mbox{}
\begin{enumerate}
\item
$\OptSolSet\eqref{eq:SATBilevelLexProgrammingLowerLevel} = \OptSolSet\eqref{eq:SATBilevelWeightedProblemLowerLevelWithEta}$ for any $\NVariables \ge 1$, $\LexUVar \in [0, 1]$ and $\eta \in (0, 1]$.
\item
$\OptValFunc\eqref{eq:SATBilevelLexProgramming} = \OptValFunc\eqref{eq:SATBilevelWeightedProblemWithEta}$ for any $\eta \in (0, 1]$.
\end{enumerate}
\end{lemma}

\begin{proof}
% \begin{enumerate}
% \item 
Assertion~(ii) follows immediately from assertion~(i).
Thus, we only prove assertion~(i).

We show the claim by induction.
We first prove the base case: we show that the claim holds for $\NVariables = 1$ and any $\eta \in (0, 1]$.
One can rewrite~\eqref{eq:SATBilevelWeightedProblemLowerLevelWithEta} in standard form with variable upper bounds as
\begin{align}
\min_{\ReformZ, \ReformAugFrac, \LexLVarS, \LexLVarT, \LexLVarU, \alpha} \ &
\mathmakebox[0cm][l]{
256 (\LexLVarS_2 + \LexLVarT_2)
+
\LexWeightValue (\LexLVarS_1 + \LexLVarT_1)
+ 
\frac{\eta}{4} \ReformAugFrac_1
+ 
\eta \ReformAugFrac_2
}
\label{eq:emma:LexToWeighted:BaseCase:StandardForm}
\\
\text{s.t.} \
&
\LexLVarU_{2} = \theta,
\notag
\\
&
\LexLVarS_1 + \alpha_1 = (3/2) \LexLVarU_{1} - 1/2,
\quad
&&
\LexLVarS_2 + \alpha_2 = (3/2) \LexLVarU_{2} - 1/2,
\notag
\\
&
\LexLVarT_1 + \alpha_3 = (3/2) \LexLVarU_{1} - 1,
\quad
&&
\LexLVarT_2 + \alpha_4 = (3/2) \LexLVarU_{2} - 1,
&&
\notag
\\
&
z_1 - 2 \LexLVarS_1 + 2 \LexLVarT_1 = 0,
&&
z_2 - 2 \LexLVarS_2 + 2 \LexLVarT_2 = 0,
\notag \\
&
z_1 + \ReformAugFrac_1 - \alpha_5 = 1/2, 
&&
z_2 + \ReformAugFrac_2 - \alpha_6 = 1/2, 
&&
\notag \\
&
z_1 + \ReformAugFrac_1 + \alpha_7 = 1/2, 
&&
z_2 + \ReformAugFrac_2 + \alpha_8 = 1/2, 
\hspace{12em}
\notag \\
&
\LexLVarU_1 = 3 \LexLVarU_2 - 2 \ReformZ_2,
\notag \\
% --------------------------------------------
&
% B_{\SATBooleanFormula} \ReformZ + \ReformPen \ge b_{\SATBooleanFormula}, 
% &&
\mathmakebox[0cm][l]{
0 \le 
% \ReformZ, \ReformAugFrac, \LexLVarS, \LexLVarT, \LexLVarU 
\ReformZ_1, \ReformZ_2, \ReformAugFrac_1, \ReformAugFrac_2, \LexLVarS_1, \LexLVarS_2, \LexLVarT_1, \LexLVarT_2, \LexLVarU_1, \LexLVarU_2 
\le 1, 
\
\alpha_1, \alpha_2, \alpha_3, \alpha_4, \alpha_5, \alpha_6, \alpha_7, \alpha_8 \ge 0.
}
\notag
% \\
% &
% \mathmakebox[0cm][l]{
% \alpha_1, \alpha_2, \alpha_3, \alpha_4, \alpha_5, \alpha_6, \alpha_7, \alpha_8 \ge 0.
% }
% &&
% \notag
\end{align}
Suppose $\LexUVar \in [0, 1/9]$ and consider the basic solution corresponding to basic variables $(\ReformAugFrac_1, \ReformAugFrac_2, \LexLVarS_1, \LexLVarS_2, \LexLVarU_1, \LexLVarU_2, \alpha_1, \alpha_2, \alpha_3, \alpha_4, \alpha_6, \alpha_8)$ and non-basic variables $(\ReformZ_1, \ReformZ_2, \LexLVarT_1, \LexLVarT_2, \alpha_5, \alpha_7)$.
Suppose all the non-basic variables are fixed to their lower bounds, i.e., $0$.
The value of the basic variables are
$$
\left( \frac{1}{2}, \  \frac{1}{2}, \  0, \  0, \  3 \theta, \  \theta, \  \frac{1}{2} - \frac{9 \theta}{2}, \  \frac{1}{2} - \frac{3 \theta}{2}, \  1 - \frac{9 \theta}{2}, \  1 - \frac{3 \theta}{2}, \  1, \  1\right),
$$
and the reduced costs of the non-basic variables are
$$
\left( 
% 15 - \frac{\eta}{64}, \  450 - \frac{\eta}{16}, \  60, \  1800, \  \frac{\eta}{64}, \  \frac{\eta}{16}
% 8 - \frac{\eta}{64}, \  128 - \frac{\eta}{16}, \  32, \  512, \  \frac{\eta}{64}, \  \frac{\eta}{16}
8 - \frac{\eta}{4}, \  128 - \eta, \  32, \  512, \  \frac{\eta}{4}, \  \eta
\right).
$$
When $\LexUVar \in [0, 1/9]$ and $\eta \le 1$, this basic solution is feasible, and all the reduced costs of the non-basic variables are strictly positive.
Thus, this is the unique optimal solution to \eqref{eq:emma:LexToWeighted:BaseCase:StandardForm}.
Using Lemma~\ref{lemma:LexLowerLevelSolution}~\ref{lemma:LexLowerLevelSolutionAssertionOne}, it is straightforward to verify that this point corresponds to the one in $\OptSolSet\eqref{eq:SATBilevelLexProgrammingLowerLevel}$.
One can show similarly for the cases where $\LexUVar \not\in [0, 1/9]$.

Now, suppose the claim holds for some $\NVariables - 1$ and any $\eta \in (0, 1]$.
Pick any $\theta \in [0, 1]$ and $0 < \eta \le 1$ and consider \eqref{eq:SATBilevelLexProgrammingLowerLevel} and \eqref{eq:SATBilevelWeightedProblemLowerLevelWithEta}.
Let $(\hat{\ReformZ}, \hat{\ReformAugFrac}, \hat{\LexLVarS}, \hat{\LexLVarT}, \hat{\LexLVarU})$ be the optimal solution to \eqref{eq:SATBilevelLexProgrammingLowerLevel} and $(\bar{\ReformZ}, \bar{\ReformAugFrac}, \bar{\LexLVarS}, \bar{\LexLVarT}, \bar{\LexLVarU})$ be an optimal solution to \eqref{eq:SATBilevelWeightedProblemLowerLevelWithEta}.
We show that the two points coincide.
% By Lemma~\ref{lemma:LexLowerLevelSolution}, 
% \begin{align*}
% &
% \hat{\LexLVarU}_{\NVariables + 1} = \theta,
% \notag
% \\
% &
% \hat{\LexLVarS}_i = [(3/2) \hat{\LexLVarU}_i - 1/2]^+, 
% \quad
% &&
% \hat{\LexLVarT}_i = [(3/2) \hat{\LexLVarU}_i - 1]^+,
% \quad
% &&
% \forall i = 1, \ldots, \NVariables + 1,
% \\
% &
% z_i = \LexLVarFuncZ(\hat{\LexLVarU}_{i}),
% &&
% \hat{\ReformAugFrac}_i = |\hat{\ReformZ}_i - 1/2|,
% &&
% \forall i = 1, \ldots, \NVariables + 1,
% \\
% % --------------------------------------------
% &
% \hat{\LexLVarU}_{i - 1} = \LexLVarFuncU(\hat{\LexLVarU}_i),
% &&
% &&
% \forall i = 2, \ldots, \NVariables + 1, \notag \\
% &
% B_{\SATBooleanFormula} \hat{\ReformZ} + \hat{\ReformPen} \OneVector \ge b_{\SATBooleanFormula}, 
% &&
% \mathmakebox[0cm][l]{
% 0 \le \hat{\ReformZ}, \hat{\ReformAugFrac}, \hat{\ReformPen}, \hat{\LexLVarS}, \hat{\LexLVarT}, \hat{\LexLVarU} \le 1
% }
% &&
% \notag 
% \end{align*}
Now, consider the instance derived from~\eqref{eq:SATBilevelWeightedProblemLowerLevelWithEta} by fixing the following variables to their optimal values:
$\ReformZ_{\NVariables + 1} = \bar{\ReformZ}_{\NVariables + 1}$,  
$\ReformAugFrac_{\NVariables + 1} = \bar{\ReformAugFrac}_{\NVariables + 1}$,  
$\LexLVarS_{\NVariables + 1} = \bar{\LexLVarS}_{\NVariables + 1}$,  
$\LexLVarT_{\NVariables + 1} = \bar{\LexLVarT}_{\NVariables + 1}$, and  
$\LexLVarU_{\NVariables + 1} = \bar{\LexLVarU}_{\NVariables + 1}$.  
The resulting instance corresponds to (\hyperref[eq:SATBilevelWeightedProblemLowerLevelWithEta]{$\bar{\mathrm{P}}_{\text{weight}}^{\text{low}}(\NVariables, \LexUVar', \eta / 4)$}),  
where $\LexUVar' = 3\theta - 4(\bar{\LexLVarS}_{\NVariables + 1} - \bar{\LexLVarT}_{\NVariables + 1})$.  
Note that $\theta' \in [0, 1]$ since $(\hat{\ReformZ}, \hat{\ReformAugFrac}, \hat{\LexLVarS}, \hat{\LexLVarT}, \hat{\LexLVarU})$ is feasible to~\eqref{eq:SATBilevelWeightedProblemLowerLevelWithEta}.
The remaining variables must be optimal for this reduced instance.  
Thus, by the induction hypothesis,
\begin{align}
&
\mathmakebox[0cm][l]{
\bar{\LexLVarU}_{\NVariables} = 3 \bar{\LexLVarU}_{\NVariables + 1} - 2 \bar{\ReformZ}_{\NVariables + 1} = 3 \theta - 4 (\bar{\LexLVarS}_{\NVariables + 1} - \bar{\LexLVarT}_{\NVariables + 1}),
}
&&
\label{eq:lemma:SATBilevelLexProgrammingLowerLevelSimple}
\\
&
\bar{\LexLVarS}_i = [(3/2) \bar{\LexLVarU}_i - 1/2]^+, 
\quad
&&
\bar{\LexLVarT}_i = [(3/2) \bar{\LexLVarU}_i - 1]^+,
\quad
&&
\forall i = 1, \ldots, \NVariables,
\notag 
\\
&
\bar{\ReformZ}_i = \LexLVarFuncZ(\bar{\LexLVarU}_{i}),
&&
\bar{\ReformAugFrac}_i = |\bar{\ReformZ}_i - 1/2|,
&&
\forall i = 1, \ldots, \NVariables,
\notag 
\\
% --------------------------------------------
&
\bar{\LexLVarU}_{i - 1} = \LexLVarFuncU(\bar{\LexLVarU}_i),
&&
&&
\forall i = 2, \ldots, \NVariables, 
\notag \\
&
\mathmakebox[0cm][l]{
0 \le \bar{\ReformZ}_i, \bar{\ReformAugFrac}_i, \bar{\LexLVarS}_i, \bar{\LexLVarT}_i, \bar{\LexLVarU}_i \le 1
}
&&
&&
\forall i = 1, \ldots, \NVariables.
\notag 
\end{align}
Let $\delta^{\ReformZ} = \bar{\ReformZ} - \hat{\ReformZ}$, $\delta^{\ReformAugFrac} = \bar{\ReformAugFrac} - \hat{\ReformAugFrac}$, $\delta^{\LexLVarS} = \bar{\LexLVarS} - \hat{\LexLVarS}$, $\delta^{\LexLVarT} = \bar{\LexLVarT} - \hat{\LexLVarT}$, and $\delta^{\LexLVarU} = \bar{\LexLVarU} - \hat{\LexLVarU}$.
Furthermore, let $\epsilon^{\LexLVarS} = \delta^{\LexLVarS}_{\NVariables + 1} = \bar{\LexLVarS}_{\NVariables + 1} - \hat{\LexLVarS}_{\NVariables + 1}$, $\epsilon^{\LexLVarT} = \delta^{\LexLVarT}_{\NVariables + 1} = \bar{\LexLVarT}_{\NVariables + 1} - \hat{\LexLVarT}_{\NVariables + 1}$, and $\epsilon = \epsilon^{\LexLVarS} + \epsilon^{\LexLVarT}$.
Note that $(\bar{\ReformZ}, \bar{\ReformAugFrac}, \bar{\LexLVarS}, \bar{\LexLVarT}, \bar{\LexLVarU}) \in \FeasSolSet\eqref{eq:SATBilevelWeightedProblemLowerLevelWithEta}$ and Lemma~\ref{lemma:LexLowerLevelSolution}~\ref{lemma:LexLowerLevelSolutionAssertionOne} imply $\bar{\LexLVarS}_{\NVariables + 1} \ge [(3/2) \LexUVar - 1/2]^+ = \hat{\LexLVarS}_{\NVariables + 1}$ and $\bar{\LexLVarT}_{\NVariables + 1} \ge [(3/2) \LexUVar - 1]^+ = \hat{\LexLVarT}_{\NVariables + 1}$.
Thus, we have $\epsilon^{\LexLVarS} = \bar{\LexLVarS}_{\NVariables + 1} - \hat{\LexLVarS}_{\NVariables + 1} \ge 0$ and $\epsilon^{\LexLVarT} = \epsilon^{\LexLVarS} + \epsilon^{\LexLVarT} \ge 0$.
We have
\begin{align}
|\delta^{\ReformZ}_{\NVariables + 1}|
&=
|\bar{\ReformZ}_{\NVariables + 1} - \hat{\ReformZ}_{\NVariables + 1}|
\notag
\\
&=
|2 (\bar{\LexLVarS}_{\NVariables + 1} - \bar{\LexLVarT}_{\NVariables + 1}) - 2 (\hat{\LexLVarS}_{\NVariables + 1} - \hat{\LexLVarT}_{\NVariables + 1})|
\notag
\\
&=
2 |(\bar{\LexLVarS}_{\NVariables + 1} - \hat{\LexLVarS}_{\NVariables + 1}) - (\bar{\LexLVarT}_{\NVariables + 1} - \hat{\LexLVarT}_{\NVariables + 1})|
\notag
\\
&\le
2 (|\bar{\LexLVarS}_{\NVariables + 1} - \hat{\LexLVarS}_{\NVariables + 1}| + |\bar{\LexLVarT}_{\NVariables + 1} - \hat{\LexLVarT}_{\NVariables + 1}||)
\notag
\\
% &=
% 2(|\delta^{\LexLVarS}_{\NVariables + 1}| + |\delta^{\LexLVarT}_{\NVariables + 1}|)
% \\
&=
2(\epsilon^{\LexLVarS} + \epsilon^{\LexLVarT})
\notag
\\
&= 2 \epsilon,
\label{eq:lemma:SATBilevelLexProgrammingLowerLevelZNPlus1}
\end{align}
and
\begin{align}
|\delta^{\LexLVarU}_{\NVariables}|
&=
|\bar{\LexLVarU}_{\NVariables} - \hat{\LexLVarU}_{\NVariables}|
\notag
\\
&=
| (3 \bar{\LexLVarU}_{\NVariables + 1} - 2 \bar{\ReformZ}_{\NVariables + 1}) - (3 \hat{\LexLVarU}_{\NVariables + 1} - 2 \hat{\ReformZ}_{\NVariables + 1}) |
\notag
\\
&=
| (3 \LexUVar - 2 \bar{\ReformZ}_{\NVariables + 1}) - (3 \LexUVar - 2 \hat{\ReformZ}_{\NVariables + 1}) |
\notag
\\
&=
2 | \bar{\ReformZ}_{\NVariables + 1} - \hat{\ReformZ}_{\NVariables + 1} |
\notag
\\
&=
2 |\delta^{\ReformZ}_{\NVariables + 1}|
\notag
\\
&\le 4 \epsilon.
\label{eq:lemma:SATBilevelLexProgrammingLowerLevelUNPlus1}
\end{align}
Observe that $\LexLVarFuncU$ and $\LexLVarFuncZ$ are Lipschitz continuous with the Lipschitz constant $3$.
Therefore, from~\eqref{eq:lemma:SATBilevelLexProgrammingLowerLevelSimple}, for any $i = 1, \ldots, \NVariables - 1$,
\begin{align}
|\delta^{\LexLVarU}_{i}|
=
|\LexLVarFuncU(\bar{\LexLVarU}_{i + 1}) - \LexLVarFuncU(\hat{\LexLVarU}_{i + 1})|
\le
3|\bar{\LexLVarU}_{i + 1} - \hat{\LexLVarU}_{i + 1}|
=
3 |\delta^{\LexLVarU}_{i + 1}|
\le
3^{\NVariables - i} |\delta^{\LexLVarU}_{\NVariables}|
\le
3^{\NVariables - i} 4 \epsilon,
\label{eq:lemma:SATBilevelLexProgrammingLowerLevelUN}
\end{align}
and for any $i = 1, \ldots, \NVariables$,
\begin{align}
|\delta^{\ReformZ}_{i}|
=
|\LexLVarFuncZ(\bar{\LexLVarU}_{i}) - \LexLVarFuncZ(\hat{\LexLVarU}_{i})|
\le
3|\bar{\LexLVarU}_{i} - \hat{\LexLVarU}_{i}|
\le
3^{\NVariables - i + 1} 4 \epsilon.
\label{eq:lemma:SATBilevelLexProgrammingLowerLevelZN}
\end{align}
Similarly, for any $\eta \in \mathbb{R}$, function $\LexUVar \mapsto [(3/2) \LexUVar - \eta]^+$ is Lipschitz continuous with the Lipschitz constant 3/2.
Thus,
\begin{align}
|\delta^{\LexLVarS}_{i}|
&\le
(3/2) |\delta^{\LexLVarU}_{i}|,
&& \forall i = 1, \ldots, \NVariables,
\label{eq:lemma:SATBilevelLexProgrammingLowerLevelSN}
\\
|\delta^{\LexLVarT}_{i}|
&\le
(3/2) |\delta^{\LexLVarU}_{i}|,
&& \forall i = 1, \ldots, \NVariables.
\notag
\end{align}
Combining with~\eqref{eq:lemma:SATBilevelLexProgrammingLowerLevelUNPlus1} and~\eqref{eq:lemma:SATBilevelLexProgrammingLowerLevelUN}, we get
\begin{equation}
|\delta^{\LexLVarS}_{i}|
+
|\delta^{\LexLVarT}_{i}|
\le
3^{\NVariables - i} 12 \epsilon,
\qquad
\forall i = 1, \ldots, \NVariables.
\label{eq:lemma:SATBilevelLexProgrammingLowerLevelSNPlusTN}
\end{equation}
Furthermore, from~\eqref{eq:lemma:SATBilevelLexProgrammingLowerLevelZNPlus1} and~\eqref{eq:lemma:SATBilevelLexProgrammingLowerLevelZN},
\begin{align}
|\delta^{\ReformAugFrac}_{i}|
&\le
|\delta^{\ReformZ}_{i}|
\le
3^{\NVariables - i + 1} 4 \epsilon,
&& \forall i = 1, \ldots, \NVariables + 1.
\label{eq:lemma:SATBilevelLexProgrammingLowerLevelFN}
\end{align}
Now, we compare
$(\bar{\ReformZ}, \bar{\ReformAugFrac}, \bar{\LexLVarS}, \bar{\LexLVarT}, \bar{\LexLVarU})$ and $(\hat{\ReformZ}, \hat{\ReformAugFrac}, \hat{\LexLVarS}, \hat{\LexLVarT}, \hat{\LexLVarU})$ in terms of the objective value of~\eqref{eq:SATBilevelWeightedProblemLowerLevelWithEta}:
\begin{align*}
&
\left(
\sum_{i = 1}^{\NVariables + 1}
\LexWeightValue^i (\bar{\LexLVarS}_i + \bar{\LexLVarT}_i)
+ \eta \sum_{i = 1}^{\NVariables + 1} \frac{1}{4^{\NVariables - i + 1}} \bar{\ReformAugFrac}_i
\right)
-
\left(
\sum_{i = 1}^{\NVariables + 1}
\LexWeightValue^i (\hat{\LexLVarS}_i + \hat{\LexLVarT}_i)
+ \eta \sum_{i = 1}^{\NVariables + 1} \frac{1}{4^{\NVariables - i + 1}} \hat{\ReformAugFrac}_i
\right)
\\
% --------------------------------------------------------
&
\qquad
=
\LexWeightValue^{\NVariables + 1} 
(
(\bar{\LexLVarS}_{\NVariables + 1} - \hat{\LexLVarS}_{\NVariables + 1})
+
(\bar{\LexLVarT}_{\NVariables + 1} - \hat{\LexLVarT}_{\NVariables + 1})
)
\\
&
\hspace{4em}
+
\sum_{i = 1}^{\NVariables}
\LexWeightValue^i ((\bar{\LexLVarS}_i - \hat{\LexLVarS}_i) + (\bar{\LexLVarT}_i - \hat{\LexLVarT}_i))
+ \eta \sum_{i = 1}^{\NVariables + 1} \frac{1}{4^{\NVariables - i + 1}}
(\bar{\ReformAugFrac}_i - \hat{\ReformAugFrac}_i)
\\
% --------------------------------------------------------
&
\qquad
=
\LexWeightValue^{\NVariables + 1} \epsilon
+
\sum_{i = 1}^{\NVariables}
\LexWeightValue^i (\delta^{\LexLVarS}_i + \delta^{\LexLVarT}_i)
+ \eta \sum_{i = 1}^{\NVariables + 1} \frac{1}{4^{\NVariables - i + 1}} \delta^{\ReformAugFrac}_i.
\end{align*}
We use~\eqref{eq:lemma:SATBilevelLexProgrammingLowerLevelSNPlusTN} and~\eqref{eq:lemma:SATBilevelLexProgrammingLowerLevelFN} and bound this quantity by $\epsilon$ from below:
\begin{align*}
&
\LexWeightValue^{\NVariables + 1} \epsilon
+
\sum_{i = 1}^{\NVariables}
\LexWeightValue^i (\delta^{\LexLVarS}_i + \delta^{\LexLVarT}_i)
+ \eta \sum_{i = 1}^{\NVariables + 1} \frac{1}{4^{\NVariables - i + 1}} \delta^{\ReformAugFrac}_i
\\
% --------------------------------------------------------
&
\qquad
\ge
\LexWeightValue^{\NVariables + 1} \epsilon
-
\sum_{i = 1}^{\NVariables}
\LexWeightValue^i (|\delta^{\LexLVarS}_i| + |\delta^{\LexLVarT}_i|)
- 
\sum_{i = 1}^{\NVariables + 1} \frac{1}{4^{\NVariables - i + 1}} |\delta^{\ReformAugFrac}_i|
\\
% --------------------------------------------------------
&
\qquad
\ge
\epsilon \left(
\LexWeightValue^{\NVariables + 1}
-
12
\sum_{i = 1}^{\NVariables}
\LexWeightValue^i 3^{\NVariables - i}  
- 
4
\sum_{i = 1}^{\NVariables + 1} \left(\frac{3}{4}\right)^{\NVariables - i + 1}
\right)
\\
% --------------------------------------------------------
&
\qquad
=
\epsilon \left(
\LexWeightValue^{\NVariables + 1}
-
12
\frac{\LexWeightValue (\LexWeightValue^{\NVariables} - 3^{\NVariables})}{\LexWeightValue - 3}
- 
16
\left(1 - \left(\frac{3}{4}\right)^{\NVariables + 1}\right)
\right)
\\
% --------------------------------------------------------
&
\qquad
=
\epsilon \left(
\frac{\LexWeightValue^{\NVariables + 1}}{13}
+
\frac{192}{13} 3^{\NVariables}
-
16
+
16
\left(\frac{3}{4}\right)^{\NVariables + 1}
\right)
\\
% --------------------------------------------------------
&
\qquad
\ge
\epsilon.
\end{align*}
Since $(\bar{\ReformZ}, \bar{\ReformAugFrac}, \bar{\LexLVarS}, \bar{\LexLVarT}, \bar{\LexLVarU})$ is optimal for \eqref{eq:SATBilevelWeightedProblemLowerLevelWithEta} and $(\hat{\ReformZ}, \hat{\ReformAugFrac}, \hat{\LexLVarS}, \hat{\LexLVarT}, \hat{\LexLVarU})$ is feasible for \eqref{eq:SATBilevelWeightedProblemLowerLevelWithEta}, it follows that $\epsilon = 0$.
Thus, $\epsilon^{\LexLVarS} = \epsilon^{\LexLVarT} = 0$, and $\bar{\LexLVarS}_{\NVariables + 1} = \hat{\LexLVarS}_{\NVariables + 1}$, $\bar{\LexLVarT}_{\NVariables + 1} = \hat{\LexLVarT}_{\NVariables + 1}$.
Therefore, in light of \eqref{eq:lemma:SATBilevelLexProgrammingLowerLevelSimple} it holds that $\hat{\ReformZ} = \bar{\ReformZ}$, $\hat{\ReformAugFrac} = \bar{\ReformAugFrac}$, $\hat{\LexLVarS} = \bar{\LexLVarS}$, $\hat{\LexLVarT} = \bar{\LexLVarT}$, $\hat{\LexLVarU} = \bar{\LexLVarU}$.
% \item 
% The claim follows immediately from assertion~(i).
\MyQED
% \end{enumerate}
\end{proof}

\section{Proof of Lemma~\ref{lemma:WeightedToPenalty}}
\label{sec:ProofOfLemmaWeightedToPenalty}

\begin{proof}
The entries in the constraint coefficients and constraint RHS in \eqref{eq:SATBilevelWeightedProblem} are $0, \pm 1, \pm 2, \pm 1/2, \pm 3, \pm 1/2, \pm 3/2$, whose sizes are less than or equal to $\Size{6}$.
By applying Lemma~\ref{lemma:PenaltyReformulation}, we obtain the desired relation.
\MyQED
\end{proof}

\section{Proof of Proposition~\ref{prop:NumberOfPolyhedra}}
\label{sec:ProofOfPropNumberOfPolyhedra}

\begin{proof}
Let $\{ Q_j : j = 1, \ldots, q \}$ be a collection of polyhedra such that $\FeasSolSet\eqref{eq:SATBLPLast} = \cup_{j = 1, \ldots, q} Q_j$.
Let $\BinaryVector^{(1)}$ and $\BinaryVector^{(2)}$ be two distinct binary vectors in $\{0, 1\}^{\NVariables}$.
Now, for $k = 1, 2$, let $\LexUVar^{(k)} = (2/3) (1 + \sum_{i = 1}^{\NVariables} \BinaryVector^{(k)}_i / 3^{\NVariables + 1 - i})$ and $(\ReformZ^{(k)}, \ReformAugFrac^{(k)}, \LexLVarS^{(k)}, \LexLVarT^{(k)}, \LexLVarU^{(k)}) \in \OptSolSet(\text{\hyperref[eq:SATBilevelLexProgrammingLowerLevel]{P${}_{\text{lex}}^{\text{low}}(\NVariables + 1, \LexUVar^{(k)})$}})$.
By Lemma~\ref{lemma:LexLowerLevelSolution}~\ref{lemma:LexLowerLevelSolutionAssertionThree} and~\ref{lemma:LexLowerLevelSolutionAssertionOne}, it follows that 
\begin{equation}
\ReformZ^{(k)} = (\mu^{(k)}, 1) \text{ and } \ReformAugFrac^{(k)} = (1/2) \OneVector, \ k = 1, 2.
\label{eq:ProofOfPropNumberOfPolyhedraZandF}
\end{equation}
In the following, we prove 
\begin{enumerate}[label=(B-\arabic*)]
\item
\label{eq:ProofOfPropNumberOfPolyhedraStepOne}
$(\LexUVar^{(k)}, \ReformZ^{(k)}, \ReformAugFrac^{(k)}, \LexLVarS^{(k)}, \LexLVarT^{(k)}, \LexLVarU^{(k)}, 1) \in \FeasSolSet\eqref{eq:SATBLPLast}$ for $k = 1, 2$;
\item
\label{eq:ProofOfPropNumberOfPolyhedraStepTwo}
$(\LexUVar^{(1)}, \ReformZ^{(1)}, \ReformAugFrac^{(1)}, \LexLVarS^{(1)}, \LexLVarT^{(1)}, \LexLVarU^{(1)}, 1)$ and $(\LexUVar^{(2)}, \ReformZ^{(2)}, \ReformAugFrac^{(2)}, \LexLVarS^{(2)}, \LexLVarT^{(2)}, \LexLVarU^{(2)}, 1)$ belong to different polyhedra in $\{ Q_j : j = 1, \ldots, q \}$.
\end{enumerate}
The claim follows from the fact that there are $2^{\NVariables}$ feasible points such that each pair of them belongs to different polyhedra, implying there are at least $2^{\NVariables}$ distinct polyhedra in $\{ Q_j : j = 1, \ldots, q \}$.

First, we show \ref{eq:ProofOfPropNumberOfPolyhedraStepOne}.
Fix $k$ and let $\eta = 1$.
By Lemma~\ref{lemma:LexToWeighted}~\ref{lemma:LexToWeightedAssertionOne}, we have $(\ReformZ^{(k)}, \ReformAugFrac^{(k)}, \LexLVarS^{(k)}, \LexLVarT^{(k)}, \LexLVarU^{(k)}) \in \OptSolSet\eqref{eq:SATBilevelWeightedProblemLowerLevel} = \OptSolSet\eqref{eq:SATBilevelLexProgrammingLowerLevel}$.
By construction (see \eqref{eq:SATBLPFirstConstraintOne} and \eqref{eq:SATBLPFirstConstraintTwo}), $B_{\SATBooleanFormula} \ReformZ^{(k)} +\OneVector \ge b_{\SATBooleanFormula}$, implying $(\ReformZ^{(k)}, \ReformAugFrac^{(k)}, \LexLVarS^{(k)}, \LexLVarT^{(k)}, \LexLVarU^{(k)}, 1) \in \FeasSolSet\eqref{eq:SATBLPLastLowerLevel}$.
Now, for contradiction, suppose $(\ReformZ^{(k)}, \ReformAugFrac^{(k)}, \LexLVarS^{(k)}, \LexLVarT^{(k)}, \LexLVarU^{(k)}, 1)$ is not optimal.
Pick an optimal solution $(\hat{\ReformZ}^{(k)}, \hat{\ReformAugFrac}^{(k)}, \hat{\LexLVarS}^{(k)}, \hat{\LexLVarT}^{(k)}, \hat{\LexLVarU}^{(k)}, \hat{\ReformPen}) \in \OptSolSet\eqref{eq:SATBLPLastLowerLevel}$.
Then, it follows that $(\hat{\ReformZ}^{(k)}, \hat{\ReformAugFrac}^{(k)}, \hat{\LexLVarS}^{(k)}, \hat{\LexLVarT}^{(k)}, \hat{\LexLVarU}^{(k)}) \in \FeasSolSet\eqref{eq:SATBilevelWeightedProblemLowerLevel}$, and has a strictly smaller objective value, contradicting the optimality of $(\ReformZ^{(k)}, \ReformAugFrac^{(k)}, \LexLVarS^{(k)}, \LexLVarT^{(k)}, \LexLVarU^{(k)})$ to $\OptSolSet\eqref{eq:SATBilevelWeightedProblemLowerLevel}$.
Thus, $(\ReformZ^{(k)}, \ReformAugFrac^{(k)}, \LexLVarS^{(k)}, \LexLVarT^{(k)}, \LexLVarU^{(k)}, 1) \in \OptSolSet\eqref{eq:SATBLPLastLowerLevel}$, implying~\ref{eq:ProofOfPropNumberOfPolyhedraStepOne}.

Second, we show \ref{eq:ProofOfPropNumberOfPolyhedraStepTwo}.
For the sake of contradiction, let us assume that $(\LexUVar^{(1)}, \ReformZ^{(1)}, \ReformAugFrac^{(1)}, \LexLVarS^{(1)}, \LexLVarT^{(1)}, \LexLVarU^{(1)}, 1)$ and $(\LexUVar^{(2)}, \ReformZ^{(2)}, \ReformAugFrac^{(2)}, \LexLVarS^{(2)}, \LexLVarT^{(2)}, \LexLVarU^{(2)}, 1)$ are in $Q$, where $Q \in \{ Q_j : j = 1, \ldots, q \}$.
Let $\bar{\LexUVar} = (\LexUVar^{(1)} + \LexUVar^{(2)}) / 2$, $\bar{\ReformZ} = (\ReformZ^{(1)} + \ReformZ^{(2)}) / 2$, $\bar{\ReformAugFrac} = (\ReformAugFrac^{(1)} + \ReformAugFrac^{(2)}) / 2$, $\bar{\LexLVarS} = (\LexLVarS^{(1)} + \LexLVarS^{(2)}) / 2$, $\bar{\LexLVarT} = (\LexLVarT^{(1)} + \LexLVarT^{(2)}) / 2$, and $\bar{\LexLVarU} = (\LexLVarU^{(1)} + \LexLVarU^{(2)}) / 2$.
Since $Q$ is convex, $(\bar{\LexUVar}, \bar{\ReformZ}, \bar{\ReformAugFrac}, \bar{\LexLVarS}, \bar{\LexLVarT}, \bar{\LexLVarU}, 1)$ is in $Q \subset \FeasSolSet\eqref{eq:SATBLPLast}$.
Let $i'$ be such that $\BinaryVector^{(1)}_{i'} \ne \BinaryVector^{(2)}_{i'}$.
From~\eqref{eq:ProofOfPropNumberOfPolyhedraZandF}, we have $\bar{\ReformZ}_{i'} = (\BinaryVector^{(1)}_{i'} + \BinaryVector^{(2)}_{i'}) / 2 = 1/2$ and $\hat{\ReformAugFrac}_{i'} = \ReformAugFrac^{(1)}_{i'} = \ReformAugFrac^{(2)}_{i'} = 1/2$.
However, the optimality of the lower-level problem~\eqref{eq:SATBLPLastLowerLevel} implies $\bar{\ReformAugFrac}_{i'} = | \bar{\ReformZ}_{i'} - 1/2 |$, a contradiction.
Thus,~\ref{eq:ProofOfPropNumberOfPolyhedraStepTwo} holds.
\MyQED
\end{proof}

\section{Proof of Proposition~\ref{theorem:ComplexityOfVerifyingOptimalSolution}}
\label{sec:ProofOfTheoremComplexityOfVerifyingOptimalSolution}

\begin{proof}
We show the complement \CompGlobalOptimalProblem{} is \ClassNP{}-complete.
The inclusion of \CompGlobalOptimalProblem{} in \ClassNP{} is similar to the proof of the inclusion of the decision version of bilevel LP in \ClassNP{}~\cite{Buchheim2023}.
Thus, we only show \CompGlobalOptimalProblem{} is \ClassNP{}-hard, by reducing \ThreeSAT{} to \CompGlobalOptimalProblem{}.
Let $\SATBooleanFormula$ be a Boolean Formula in 3CNF, and construct \eqref{eq:SATBLPLast}.
By Lemma~\ref{lemma:LexLowerLevelSolution}, $\LexUVar = 1/6$, $x_i = 1/2$, $i = 1, \ldots, \NVariables$, $y = 0$, $f = \ZeroVector$, $e = 0$ is feasible for \eqref{eq:SATBLPLast}.
Its objective value is $0$, which is not optimal if and only if  $\SATBooleanFormula$ is satisfiable.
Thus, \CompGlobalOptimalProblem{} is \ClassNP{}-hard.

\MyQED
\end{proof}

\section{Proof of Theorem~\ref{theorem:ComplexityOfLocalSearch}}
\label{sec:ProofOfTheoremComplexityOfLocalSearch}

% Since the proof is lengthy, we provide only a sketch below.
% The full proof is given in Appendix~\ref{sec:ProofOfTheoremComplexityOfLocalSearch}.

% We begin by considering a bilevel LP instance obtained by fixing $x_2$ in \eqref{eq:NotationBLP}:
% \begin{align}
% \PartialOptValFunc(x_2) =
% \min_{x_1} \ &
% c_{2 1}^{\top} x_1 + c_{2 2} x_2
% \tag{$\textsc{P}_{\textsc{single}}^{\textsc{fixed}}(x_2)$}
% \label{eq:FixedNotationBLP}
% \\
% \text{s.t.} \
% &
% x_1 \in \arg\min_{x_1'} \left\{ c_{1 1}^{\top} x_1' : A_{1 1} x_1' + A_{1 2} x_2 \ge b_1 \right\}. \notag
% \end{align}
% Given a rational $x_2$, the value $\PartialOptValFunc(x_2)$ is either a rational number of polynomial size (in the size of the instance and $x_2$), or $\pm \infty$.
% Function $\PartialOptValFunc$ satisfies the following property, proven in Appendix~\ref{sec:ProofOfTheoremComplexityOfLocalSearch}.

% -------------------

Given instance~\eqref{eq:NotationBLP}, one can always transform the lower-level problem into standard form in polynomial time~\cite{Schrijver1998}.
Thus, we transform instance~\eqref{eq:NotationBLP} to the following:
\begin{align}
\min_{z_1, x_2} \ &
\bar{c}_{2 1}^{\top} z_1 + c_{2 2} x_2
\tag{$\textsc{P}_{\textsc{standard}}$}
\label{eq:StandardFormBLP}
\\
\text{s.t.} \
&
z_1 \in 
\OptSolSet\eqref{eq:StandardFormBLPLowerLevel},
% \arg\min_{x_1'} \{ c_{1 1}^{\top} x_1' : 
% A_{1 1} x_1' + A_{1 2} x_2 = b_1,
% \tilde{A}_{1 2} x_2 \ge \tilde{b}_1,
% x_1' \ge 0
% \},
\notag
\end{align}
where
\begin{align}
\min_{z_1} \ & \bar{c}_{1 1}^{\top} z_1
\tag{$\textsc{P}_{\textsc{standard}}^{\text{lower}}(x_2)$}
\label{eq:StandardFormBLPLowerLevel}
\\
\text{s.t.} \
& 
\bar{A}_{1 1} z_1 + \bar{A}_{1 2} x_2 = \bar{b}_1, 
\bar{A}_{1 2}' x_2 \ge \bar{b}_1',
z_1 \ge 0,
\notag
\end{align}
with $\bar{c}_{1 1}, \bar{c}_{2 1} \in \mathbb{Q}^{\bar{\NVariables}}$, $c_{2 2} \in \mathbb{Q}$, $\bar{A}_{1 1} \in \mathbb{Q}^{\bar{\NConstraints} \times \bar{\NVariables}}$, $\bar{A}_{1 2}, \bar{b}_1 \in \mathbb{Q}^{\bar{\NConstraints}}$, $\bar{A}_{1 2}', \bar{b}_1' \in \mathbb{Q}^{\bar{\NConstraints}'}$, and $\bar{A}_{1 1}$ has full row rank.
The decision variables are $z_1 \in \mathbb{R}^{\bar{n}}$ and $x_2 \in \mathbb{R}$.
Any locally optimal solution $(z_1, x_2)$ to~\eqref{eq:StandardFormBLP} can be efficiently converted into a locally optimal solution $(x_1, x_2)$ to~\eqref{eq:NotationBLP}, and vice versa.

Now, consider the bilevel LP instance obtained by fixing $x_2$ in~\eqref{eq:StandardFormBLP}:
\begin{align}
\PartialOptValFunc(x_2) =
\min_{z_1} \ &
\bar{c}_{2 1}^{\top} z_1 + c_{2 2} x_2
\tag{$\textsc{P}_{\textsc{standard}}^{\textrm{fixed}}(x_2)$}
\label{eq:FixedStandardFormBLP}
\\
\text{s.t.} \
&
z_1 \in 
\OptSolSet\eqref{eq:StandardFormBLPLowerLevel}.
\notag
\end{align}
Given a locally optimal solution $x_2$ to $\PartialOptValFunc$, we can efficiently recover a locally optimal solution $(z_1, x_2)$ to~\eqref{eq:StandardFormBLP}, and vice versa.
Moreover, evaluating $\PartialOptValFunc(x_2)$ is computationally efficient, since the optimal solution set of~\eqref{eq:FixedStandardFormBLP} coincides with that of the following instance:
\begin{align}
\lexmin_{z_1} \ &
\{ \bar{c}_{1 1}^{\top} z_1, \bar{c}_{2 1}^{\top} z_1 + c_{2 2} x_2 \}
\tag{$\textsc{P}_{\textsc{standard}}^{\textrm{lex}}(x_2)$}
\label{eq:LexStandardFormBLP}
\\
\text{s.t.} \
&
\bar{A}_{1 1} z_1 + \bar{A}_{1 2} x_2 = \bar{b}_1,
\bar{A}_{1 2}' x_2 \ge \bar{b}_1',
z_1 \ge 0.
\notag
\end{align}
Define
\begin{equation}
l = \inf\{ x_2 : \exists z_1 \text{ s.t. } \bar{A}_{1 1} z_1 + \bar{A}_{1 2} x_2 = \bar{b}_1, \bar{A}_{1 2}' x_2 \ge \bar{b}_1', z_1 \ge 0 \}
\label{eq:StandardFormBLPX2LB}
\end{equation}
and
\begin{equation}
u = \sup\{ x_2 : \exists z_1 \text{ s.t. } \bar{A}_{1 1} z_1 + \bar{A}_{1 2} x_2 = \bar{b}_1, \bar{A}_{1 2}' x_2 \ge \bar{b}_1', z_1 \ge 0 \}.
\label{eq:StandardFormBLPX2UB}
\end{equation}
Either both~\eqref{eq:StandardFormBLPX2LB} and~\eqref{eq:StandardFormBLPX2UB} are infeasible (i.e., $l = \infty$, $u = -\infty$), or both admit optimal solutions with $0 \le l \le u \le 1$.
In either case, the function $\PartialOptValFunc(x_2)$ is finite if and only if $l \le x_2 \le u$.
Moreover, by Lemma~\ref{lemma:estimate-of-primal-linear-inequality}, if $l$ and $u$ are finite, then they are rational numbers of polynomial size.

\begin{lemma}
\label{lemma:PiecewiseLinearityOfPartialValueFunction}
Suppose \eqref{eq:StandardFormBLPX2LB} and \eqref{eq:StandardFormBLPX2UB} are feasible.
Then, $\PartialOptValFunc$ is continuous and piecewise-linear over $[l, u]$.
Furthermore, all breakpoints of $\PartialOptValFunc$ (including $l$ and $u$) are rational numbers of polynomial size.
\end{lemma}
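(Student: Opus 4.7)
My plan is to write $\PartialOptValFunc$ as an affine combination of two ordinary parametric LP value functions with right-hand sides depending linearly on $x_2$, and then invoke classical sensitivity analysis for parametric LPs.

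The key step is to produce a rational scalar $\eta > 0$ of polynomial encoding size such that, for every $x_2 \in [l, u]$, any minimizer of the scalarized objective $(\bar{c}_{1 1} + \eta \bar{c}_{2 1})^{\top} z_1$ over the lower-level feasible set is also a lex-minimizer of $(\bar{c}_{1 1}^{\top} z_1,\, \bar{c}_{2 1}^{\top} z_1)$. I expect this to be the main obstacle; its existence rests on a uniform, polynomial-size positive lower bound on the gap between distinct values of $\bar{c}_{1 1}^\top z_1$ attained at vertices of the parametric lower-level polyhedron, which I would extract from Lemma~\ref{lemma:estimate-of-primal-linear-inequality} applied to the lifted polyhedron
\[
P = \{(z_1, x_2) : \bar{A}_{1 1} z_1 + \bar{A}_{1 2} x_2 = \bar{b}_1,\, \bar{A}_{1 2}' x_2 \ge \bar{b}_1',\, z_1 \ge 0\}.
\]
The vertex-size bound controls the coordinates of every vertex of every slice $\{z_1 : (z_1, x_2) \in P\}$, which in turn controls how close two distinct values of $\bar{c}_{1 1}^\top z_1$ can get at such vertices.

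Having fixed such an $\eta$, I would define
\[
\phi(x_2) = \min\bigl\{\bar{c}_{1 1}^{\top} z_1 : \bar{A}_{1 1} z_1 = \bar{b}_1 - \bar{A}_{1 2} x_2,\, z_1 \ge 0\bigr\}
\]
and let $\bar{\phi}(x_2)$ be the same quantity with $\bar{c}_{1 1}$ replaced by $\bar{c}_{1 1} + \eta \bar{c}_{2 1}$. By LP duality, each is a pointwise maximum of finitely many affine functions of $x_2$, indexed by vertices of a dual polyhedron that does not depend on $x_2$; hence both are piecewise linear, convex, and continuous on $[l, u]$, with breakpoints that are rational of polynomial size, again via Lemma~\ref{lemma:estimate-of-primal-linear-inequality} applied to the dual. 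The choice of $\eta$ then forces any scalarized minimizer $z_1^{\ast}(x_2)$ to satisfy $\bar{c}_{1 1}^\top z_1^{\ast}(x_2) = \phi(x_2)$ and $\bar{c}_{2 1}^\top z_1^{\ast}(x_2) = \PartialOptValFunc(x_2) - c_{2 2} x_2$, which yields the identity
\[
\PartialOptValFunc(x_2) = c_{2 2} x_2 + \frac{\bar{\phi}(x_2) - \phi(x_2)}{\eta}.
\]
This exhibits $\PartialOptValFunc$ as a sum of piecewise linear continuous functions on $[l, u]$, so $\PartialOptValFunc$ is itself piecewise linear and continuous, and its breakpoints lie in the union of those of $\phi$ and $\bar{\phi}$ together with $\{l, u\}$, all rational of polynomial encoding size.
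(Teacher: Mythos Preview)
Your scalarization route is genuinely different from the paper's. The paper works directly with the lexicographic LP \eqref{eq:LexStandardFormBLP}: for a fixed $x_2$ it picks a lex-optimal basis, observes that the reduced-cost vectors for both objectives depend only on the basis and not on $x_2$, and hence the same basis stays lex-optimal on the full interval of $x_2$ over which it remains primal feasible. Finitely many bases then cover $[l,u]$ with closed intervals whose endpoints are polynomial-size rationals, and on each interval the basic solution (hence $\PartialOptValFunc$) is affine in $x_2$. Your reduction to two ordinary parametric value functions is a legitimate alternative that lets you cite classical right-hand-side sensitivity rather than redo it for the lexicographic case.

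The gap is in your construction of $\eta$. A uniform positive lower bound on the gap between \emph{distinct} values of $\bar{c}_{1 1}^\top z_1$ at slice vertices does not exist: for two bases $B,B'$ the slice-vertex values $\bar{c}_{1 1,B}^{\top}B^{-1}(\bar b_1-\bar A_{1 2}x_2)$ and $\bar{c}_{1 1,B'}^{\top}{B'}^{-1}(\bar b_1-\bar A_{1 2}x_2)$ are affine in $x_2$ and can cross, so near a crossing they are distinct yet arbitrarily close. A vertex-size bound on the lifted polyhedron $P$ does not help, since for generic $x_2$ the vertices of the slice sit on edges of $P$, not at its vertices. The correct way to choose $\eta$ is through reduced costs, which are independent of $x_2$: if $B$ is optimal for $\bar c_{1 1}+\eta\bar c_{2 1}$, write its reduced-cost vector as $r^{(1)}+\eta r^{(2)}$, where $r^{(k)}$ is the reduced cost of $\bar c_{k 1}$ under $B$. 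Any strictly negative entry of $r^{(1)}$ is a nonzero rational determined by the data alone, hence at least $1/D$ in magnitude, while $\|r^{(2)}\|_\infty\le R$, with $D,R$ of polynomial size and independent of $x_2$. Taking $\eta<1/(DR)$ forces $r^{(1)}\ge 0$, and on coordinates with $r^{(1)}_j=0$ the scalarized nonnegativity gives $r^{(2)}_j\ge 0$; this is exactly lex-optimality of $B$. With this repair your identity $\PartialOptValFunc(x_2)=c_{2 2}x_2+(\bar\phi(x_2)-\phi(x_2))/\eta$ holds for all $x_2\in[l,u]$ and the remainder of your argument goes through---but note that the repair is precisely the paper's key observation, so after the fix the two proofs rest on the same fact.
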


\begin{proof}
It is known that lexicographic LPs admit basic optimal solutions; that is, for every $x_2$, if \eqref{eq:LexStandardFormBLP} is feasible, then there exists a basic optimal solution (see, e.g., \cite{isermann1982linear}).
Fix $x_2 \in [\ell, u]$, and pick a basic optimal solution to \eqref{eq:LexStandardFormBLP} at $x_2$.  
The set of values of $x_2$ for which this basic optimal solution remains feasible defines a closed interval $I$, whose endpoints are rational numbers of polynomial size.
Moreover, the basic solution depends linearly on $x_2$ over this interval.  
As long as this solution remains feasible, it also remains optimal, since the reduced cost does not depend on $x_2$.
Thus, over $I$, the value of $\PartialOptValFunc$ is linear in $x_2$.

Since there are only finitely many basic feasible solutions, $[l, u]$ can be covered by finitely many such intervals, where each corresponds to a different optimal basic solution.  
Therefore, $\PartialOptValFunc$ is piecewise-linear and continuous over $[l, u]$, and all the breakpoints are rational of polynomial size.
\MyQED
\end{proof}

In light of Lemma~\ref{lemma:PiecewiseLinearityOfPartialValueFunction}, if a locally optimal solution exists, then there also exists a rational locally optimal solution of polynomial size that corresponds to a breakpoint of $\PartialOptValFunc$.
To search for such a solution, we need access to one-sided derivatives of $\PartialOptValFunc$.
The following result ensures this is efficient.

\begin{lemma}
\label{lemma:OneSidedDerivativeOfPartialOptValFunction}
Given a rational point $x_2 \in \mathbb{Q}$, there is a polynomial-time algorithm that computes the one-sided derivatives of $\PartialOptValFunc$ at $x_2$, whenever they exist.
\end{lemma}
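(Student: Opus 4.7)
The plan is to leverage Lemma~\ref{lemma:PiecewiseLinearityOfPartialValueFunction}: $\PartialOptValFunc$ is continuous and piecewise linear on $[l,u]$, and all of its breakpoints are rational numbers of polynomial encoding size. Consequently, any one-sided derivative at a rational $x_2$ can be recovered exactly by a single finite difference, provided the step is chosen small enough to stay within one linear piece. The whole algorithm then reduces to (i) deciding whether the requested derivative exists, (ii) picking a polynomial-size step $\epsilon > 0$, and (iii) evaluating $\PartialOptValFunc$ twice.

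Concretely, I would first solve~\eqref{eq:StandardFormBLPX2LB} and~\eqref{eq:StandardFormBLPX2UB} to obtain $l$ and $u$; this immediately tells us whether each one-sided derivative exists (the right derivative exists iff $x_2 < u$, and symmetrically for the left, while neither exists when $x_2 \notin [l,u]$). I would then exhibit an explicit rational $\epsilon > 0$ of polynomial bit-length such that $[x_2 - \epsilon, x_2 + \epsilon] \subset [l,u]$ contains no breakpoint of $\PartialOptValFunc$ besides possibly $x_2$ itself: by Lemma~\ref{lemma:PiecewiseLinearityOfPartialValueFunction} there is a polynomial bound $N$ on the encoding size of every breakpoint, so any two distinct breakpoints differ by at least $1/4^N$, and combining this with $\Size{x_2}$, $\Size{l}$, and $\Size{u}$ yields $\epsilon = 1/2^{\mathrm{poly}}$. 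Finally I would evaluate $\PartialOptValFunc(x_2)$ and $\PartialOptValFunc(x_2 + \epsilon)$ in polynomial time through the lexicographic LP~\eqref{eq:LexStandardFormBLP} and return
\[
\PartialOptValFunc'_+(x_2) \;=\; \frac{\PartialOptValFunc(x_2 + \epsilon) - \PartialOptValFunc(x_2)}{\epsilon}.
\]
By the choice of $\epsilon$, both evaluation points lie in a single linear piece of $\PartialOptValFunc$, so this formula is exact; the left derivative is obtained symmetrically using $x_2 - \epsilon$.

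The main obstacle is making Lemma~\ref{lemma:PiecewiseLinearityOfPartialValueFunction} quantitative. One must trace through the parametric analysis of basic optimal solutions of~\eqref{eq:LexStandardFormBLP} as functions of $x_2$ and invoke Lemma~\ref{lemma:estimate-of-primal-linear-inequality} to produce an explicit polynomial bound $N$ on the encoding size of every breakpoint, so that the step $\epsilon$ above is provably of polynomial size in the input.
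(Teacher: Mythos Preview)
Your approach is correct, and in fact somewhat simpler than the paper's. Both arguments rest on the same idea: since $\PartialOptValFunc$ is piecewise linear with breakpoints of bounded encoding size (Lemma~\ref{lemma:PiecewiseLinearityOfPartialValueFunction}), a single finite difference with a sufficiently small rational step recovers the one-sided derivative exactly. The difference lies in how the step is chosen.

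You pick $\epsilon = 1/2^{\mathrm{poly}(N,\Size{x_2})}$ directly, using that the distance from the given $x_2$ to any breakpoint distinct from $x_2$ is at least $1/(2^{N}\cdot 2^{\Size{x_2}})$. Since $x_2$ is part of the input, $\Size{x_2}$ counts toward the input size, so this $\epsilon$ has polynomial encoding and the evaluations of $\PartialOptValFunc$ at $x_2$ and $x_2\pm\epsilon$ via~\eqref{eq:LexStandardFormBLP} run in polynomial time. The paper instead makes a case distinction on whether $\Size{x_2}\le s$ (the bound on breakpoint sizes). In the easy case it takes $\epsilon=1/2^{3s}$, which is essentially your argument specialised to small $x_2$; in the other case ($\Size{x_2}>s$) it invokes the continued fraction method (Lemma~\ref{lemma:ContinuedFractionMethod}) to locate the unique possible nearby breakpoint and steps to it. Your route avoids this extra machinery at no cost: the continued fraction step is not needed for polynomial time here, because an $\epsilon$ whose size depends polynomially on $\Size{x_2}$ is perfectly acceptable.

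Two small points to tighten. First, the sentence ``any two distinct breakpoints differ by at least $1/4^{N}$'' is not the inequality you actually need; what you use (and correctly signal by mentioning $\Size{x_2}$) is the distance from $x_2$ to any breakpoint other than $x_2$, which is at least $1/2^{N+\Size{x_2}}$. Second, the containment $[x_2-\epsilon,x_2+\epsilon]\subset[l,u]$ should be stated one-sidedly for each derivative (e.g., $[x_2,x_2+\epsilon]\subset[l,u]$ for the right derivative), since at $x_2=l$ or $x_2=u$ the two-sided version fails even though one of the one-sided derivatives exists.
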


To prove this, we use the continued fraction method from ancient Greece.

\begin{lemma}[Continued fraction method~\cite{Schrijver1998}]
% [Corollary 6.3a in \cite{Schrijver1998}]
\label{lemma:ContinuedFractionMethod}
There exists a polynomial algorithm which, for given rational number $\alpha$ and natural number $M$, tests if there exists a rational number $p/q$ with $1 \le q \le M$ and $|\alpha - p/q| < 1/2M^2$, and if so, finds this (unique) rational number.
\end{lemma}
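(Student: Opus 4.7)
The plan is to prove the statement via the classical theory of continued fractions. Writing $\alpha = p/q$ in lowest terms, I would compute the (finite) continued fraction expansion $\alpha = [a_0; a_1, a_2, \ldots, a_N]$ via a variant of the Euclidean algorithm applied to $p$ and $q$, generating in parallel the convergents $p_k/q_k$ by the standard recurrences $p_k = a_k p_{k-1} + p_{k-2}$ and $q_k = a_k q_{k-1} + q_{k-2}$ with the usual seed values. The algorithm outputs the first convergent $p_k/q_k$ with $q_k \le M$ and $|\alpha - p_k/q_k| < 1/(2M^2)$, if one exists, and otherwise reports failure.

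For uniqueness, I would argue that if $p_1/q_1$ and $p_2/q_2$ were two distinct rationals both satisfying the condition, then
\[
\left|\frac{p_1}{q_1} - \frac{p_2}{q_2}\right| = \frac{|p_1 q_2 - p_2 q_1|}{q_1 q_2} \ge \frac{1}{q_1 q_2} \ge \frac{1}{M^2},
\]
since the numerator is a nonzero integer, whereas the triangle inequality gives $|p_1/q_1 - p_2/q_2| < 1/M^2$, a contradiction. For correctness of the search restricted to convergents, I would invoke the classical theorem (a standard consequence of the best-approximation property of convergents) that any rational $p'/q'$ with $|\alpha - p'/q'| < 1/(2q'^2)$ is necessarily a convergent of $\alpha$. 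Since $q' \le M$ implies $1/(2M^2) \le 1/(2q'^2)$, every candidate satisfying the hypothesis of the lemma falls under this theorem, so it suffices to enumerate the convergents with $q_k \le M$.

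The main obstacle is the polynomial bound on the running time. I would control it via three observations. First, the length $N$ of the continued fraction expansion of a rational is bounded by $O(\log q)$, matching the number of steps of the Euclidean algorithm on $(p,q)$, so only polynomially many convergents are ever generated. Second, each $q_k$ is at most $M$ by the termination rule, and each $p_k$ satisfies $|p_k| \le (|\alpha| + 1) q_k \le (|\alpha|+1) M$, so every convergent is of size polynomial in $\Size{\alpha} + \log M$. Third, each recurrence update and each test of the inequality $|\alpha - p_k/q_k| < 1/(2M^2)$ reduces to a comparison of two rationals of polynomial size and can be performed in polynomial time. Combining the three observations yields an algorithm running in time polynomial in $\Size{\alpha}$ and $\log M$, as required. \MyQED
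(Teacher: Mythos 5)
The paper does not actually prove this lemma; it is quoted verbatim from Schrijver~\cite{Schrijver1998} (Corollary~6.3a there), whose proof is precisely the continued-fraction/convergent argument you give, so your proposal is correct and matches the intended, cited proof. The one imprecision is that Legendre's criterion for a \emph{rational} $\alpha$ only guarantees that $p/q$ is a convergent of one of the two finite continued-fraction expansions of $\alpha$; however, the extra convergent of the non-canonical expansion is $(p_N-p_{N-1})/(q_N-q_{N-1})$ with error exactly $1/(q_N q')$ where $q'=q_N-q_{N-1}$, and the requirements $q'\le M$ and $1/(q_N q')<1/(2M^2)$ would force $q_N>2q'$, i.e.\ $a_N=1$, which the canonical expansion excludes --- so enumerating the canonical convergents suffices and this gap is harmless.
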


\begin{proof}[Lemma~\ref{lemma:OneSidedDerivativeOfPartialOptValFunction}]
We describe the procedure for computing the left derivative at a given point $x_2$.
The right derivative can be computed analogously.
First, verify whether $l < x_2 \le u$.
If this condition does not hold, return ``no.''
Let $s$ be an integer bounding the size of breakpoints of $\PartialOptValFunc$.
Specifically, every breakpoint can be written as $p/q$, where $-2^s \le p \le 2^s$ and $1 \le q \le 2^s$, and the distance between two successive breakpoints is at least $1/2^{2s}$.
We distinguish two cases based on whether the size of $x_2$ is at most $s$.
\begin{description}
\item[Case 1. $\Size{x_2} \le s$:]
Let $\epsilon = 1 / 2^{3s}$ and evaluate $\PartialOptValFunc$ at $x_2 - \epsilon$.
Since there are no rational numbers of size at most $s$ in the interval $[x_2 - \epsilon, x_2)$, this interval contains no breakpoints of $\PartialOptValFunc$.
Furthermore, $x_2 - \epsilon$ is a rational number of polynomial size and satisfies $x_2 - \epsilon > l$.
It follows that $\PartialOptValFunc$ is linear over $[x_2 - \epsilon, x_2]$, and thus the left derivative at $x_2$ equals the slope of this segment.
Compute this slope and return it.
\item[Case 2. $\Size{x_2} > s$:]
Let $\epsilon' = 1 / 2^{2s + 1}$ and consider the interval $[x_2 - \epsilon', x_2 + \epsilon']$.
This interval contains at most one rational number of size at most $s$, and hence at most one breakpoint.
Apply the continued fraction method (Lemma~\ref{lemma:ContinuedFractionMethod}) to determine whether there exists a rational number $p/q$ in $[x_2 - \epsilon', x_2 + \epsilon']$ with $q \le 2^s$.
If such a point exists and satisfies $p/q < x_2$, let $v := p/q$.
Otherwise, let $v := x_2 - \epsilon'$.
Then, the interval $(v, x_2]$ contains no breakpoints, and $v$ has polynomial size.
As in Case 1, compute the slope between $v$ and $x_2$ and return it.
\MyQED
\end{description}
\end{proof}

% ---------------------------------------

We are now ready to prove Theorem~\ref{theorem:ComplexityOfLocalSearch}.

\begin{proof}[Theorem~\ref{theorem:ComplexityOfLocalSearch}]
We describe a procedure for finding a locally optimal solution to $\PartialOptValFunc$.
Once such a point $x_2$ is found, a corresponding $z_1$ can be computed by solving~\eqref{eq:LexStandardFormBLP}, and the pair $(z_1, x_2)$ is a locally optimal solution to~\eqref{eq:StandardFormBLP}.

We begin by verifying whether $0 \le l \le u \le 1$ holds.
If not, output ``no.''
If $l = u$, then $x_2 = l$ is trivially a locally optimal solution; return this value.
In the remainder of the proof, assume $0 \le l < u \le 1$ and proceed to find a locally optimal solution via binary search.

First, check whether either endpoint $l$ or $u$ is a locally optimal solution by evaluating the corresponding one-sided derivatives.
If so, return the corresponding endpoint value.
Otherwise, we must have that the right derivative of $\PartialOptValFunc$ at $l$ is negative, and the left derivative at $u$ is positive.
Let $v := (l + u)/2$ be the midpoint of the interval.
Evaluate the one-sided derivatives of $\PartialOptValFunc$ at $v$.
If $v$ is locally optimal, return it.
If the left derivative at $v$ is positive, then by the extreme value theorem, a locally optimal solution (in particular, a locally optimal breakpoint) must lie in the interval $[l, v]$, so let $u := v$.
Otherwise, there is a locally optimal solution in $[v, u]$, thus update $l := v$.
Repeat this process by evaluating the one-sided derivatives at the new midpoint and updating the interval accordingly.

Suppose this procedure continues until the interval length satisfies $u - l \le 1 / 2^{2s + 1}$, where $s$ is an upper bound on the size of the breakpoints of $\PartialOptValFunc$.
At this point, the interval $[l, u]$ contains at most one rational number of size at most $s$.
Since a locally optimal solution (i.e., a breakpoint of $\PartialOptValFunc$) of size at most $s$ is guaranteed to exist within this interval, it must be the unique such rational point.
Apply the continued fraction method (Lemma~\ref{lemma:ContinuedFractionMethod}) to identify this point, and return it as the output.

Hence, the algorithm runs in polynomial time.
\MyQED
\end{proof}

\end{appendix}

\fi

\end{document}